\DeclareFontFamily{U}{shuffle}{}
\DeclareFontShape{U}{shuffle}{m}{n}{ <-8>shuffle7 <8->shuffle10}{}
\def\cbar#1{\wc{\bar#1}}
\newcommand{\AMZV}{\mathsf {AMZV}}
\newcommand{\MZV}{\mathsf {MZV}}
\newcommand{\MTV}{\mathsf {MTV}}
\newcommand{\MSV}{\mathsf {MSV}}
\newcommand{\MMV}{\mathsf {MMV}}
\newcommand{\MMVo}{\mathsf {MMVo}}
\newcommand{\MMVe}{\mathsf {MMVe}}
\newcommand{\AMMVe}{\mathsf {AMMVe}}
\newcommand{\AMMVo}{\mathsf {AMMVo}}
\newcommand{\AMMV}{\mathsf {AMMV}}
\newcommand{\AMTV}{\mathsf {AMTV}}
\newcommand{\AMtV}{\mathsf {AMtV}}
\newcommand{\AMSV}{\mathsf {AMSV}}
\newcommand{\CMZV}{\mathsf {CMZV}}
\newcommand{\sha}{\shuffle}
\newcommand{\cst}{\mathbin{\rotatebox[origin=c]{180}{$\shuffle$}}}
\newcommand{\cstt}{\mathbin{\rotatebox[origin=c]{180}{$\scriptstyle \sha$}}}
\newcommand\brabar{\scalebox{.3}{(\,}\raisebox{-2.1pt}{--}\scalebox{.3}{\,)}}
\newcommand{\db}{{\mathbb D}}
\newcommand{\pari}{{\rm par}}
\newcommand{\dk}{{\mathbb K}}
\newcommand{\ga}{\alpha}
\newcommand{\gb}{\beta}
\newcommand{\gk}{\kappa}
\newcommand{\gl}{\lambda}
\newcommand{\gd}{\delta}
\newcommand{\gs}{\sigma}
\newcommand{\lra}{\longrightarrow}
\newcommand{\Lra}{\Longrightarrow}
\newcommand\setX{{\mathsf{X}}}
\newcommand\fA{{\mathfrak{A}}}
\newcommand\evaM{{\texttt{M}}}
\newcommand\evaML{{\text{\em{\texttt{M}}}}}
\newcommand\tz{{\texttt{z}}}
\newcommand\emz{\emph{\texttt{z}}}
\newcommand\tx{{\texttt{x}}}
\newcommand\om{{\omega}}
\newcommand\eps{{\varepsilon}}
\newcommand{\bfMB}{{\bf MB}}
\newcommand{\bftB}{{\bf tB}}
\newcommand{\bfTB}{{\bf TB}}
\newcommand{\bfSB}{{\bf SB}}
\newcommand{\bfp}{{\bf p}}
\newcommand{\bfq}{{\bf q}}
\newcommand{\bfu}{{\bf u}}
\newcommand{\bfv}{{\bf v}}
\newcommand{\bfw}{{\bf w}}
\newcommand{\bfk}{{\boldsymbol{\sl{k}}}}
\newcommand{\bfl}{{\boldsymbol{\sl{l}}}}
\newcommand{\bfn}{{\boldsymbol{\sl{n}}}}
\newcommand{\bfs}{{\boldsymbol{\sl{s}}}}
\newcommand{\bft}{{\boldsymbol{\sl{t}}}}
\newcommand{\bfz}{{\boldsymbol{\sl{z}}}}
\newcommand\bfgs{{\boldsymbol \gs}}
\newcommand\bfxi{{\boldsymbol \xi}}
\newcommand\bfeps{{\boldsymbol \eps}}
\newcommand\bfone{{\bf 1}}
\newcommand{\myone}{{1}}
\def\int{\displaystyle\!int}
\def\lim{\displaystyle\!lim}
\def\sum{\displaystyle\!sum}
\def\sup{\displaystyle\!sup}
\def\inf{\displaystyle\!inf}
\def\cap{\displaystyle\!cap}
\def\max{\displaystyle\!max}
\def\min{\displaystyle\!min}
\def\frac{\displaystyle\!frac}
\let\oldsection\section
\renewcommand\section{\setcounter{equation}{0}\oldsection}
\DeclareMathOperator*{\sgn}{sgn}
\DeclareMathOperator*{\dep}{dep}
\DeclareMathOperator*{\RRe}{Re}
\DeclareMathOperator{\Li}{Li}
\newcommand{\od}{\mathrlap{\mathrm{o}}\phantom{+}}
\newcommand{\ev}{\mathrlap{\mathrm{e}}\phantom{+}}
\DeclareFontFamily{U}{mathx}{}
\DeclareFontShape{U}{mathx}{m}{n}{<-> mathx10}{}
\DeclareSymbolFont{mathx}{U}{mathx}{m}{n}
\DeclareMathAccent{\widehat}{0}{mathx}{"70}
\DeclareMathAccent{\wc}{0}{mathx}{"71}
\def\R{\mathbb{R}}
\def\N{\mathbb{N}}
\def\Q{\mathbb{Q}}
\def\CC{\mathbb{C}}
\def\t{\widetilde{t}}
\def\ze{\zeta}
\def\ol{\overline}
\theoremstyle{plain}
\newtheorem{thm}{Theorem}[section]
\newtheorem{lem}[thm]{Lemma}
\newtheorem{cor}[thm]{Corollary}
\newtheorem{con}[thm]{Conjecture}
\newtheorem{pro}[thm]{Proposition}
\theoremstyle{definition}
\newtheorem{defn}[thm]{Definition}
\newtheorem{re}[thm]{Remark}
\newtheorem{exa}[thm]{Example}
\begin{document}
\title{\bf Alternating Multiple Mixed Values: Regularization, Special Values, Parity and Dimension Conjectures}
\author{{Ce Xu${}^{a,}$\thanks{Email: cexu2020@ahnu.edu.cn},\ \  {Lu Yan${}^{b,}$\thanks{Email: 1910737@tongji.edu.cn}}{}\ \ and \ Jianqiang Zhao${}^{c,}$\thanks{Email: zhaoj@ihes.fr}}\\[1mm]
\small a. School of Mathematics and Statistics, Anhui Normal University, Wuhu 241002, P.R. China\\
\small b. School of Mathematical Sciences, Tongji University, Shanghai 200092, P.R. China\\
\small c. Department of Mathematics, The Bishop's School, La Jolla, CA 92037, United States of America}

\date{}
\maketitle

\noindent{\bf Abstract.} In this paper, we define and study a variant of multiple zeta values (MZVs) of level four, called alternating multiple mixed values or alternating multiple $M$-values (AMMVs), forming a $\Q[i]$-subspace of the colored MZVs of level four. This variant includes the alternating version of Hoffman's multiple $t$-values, Kaneko-Tsumura's multiple $T$-values, and the multiple $S$-values studied by the authors previously as special cases. We exhibit nice properties similar to the ordinary MZVs such as the generalized duality, integral shuffle and series stuffle relations. After setting up the algebraic framework we derive the regularized double shuffle relations of the AMMVs by adopting the machinery from color MZVs of level four. As an important application, we prove a parity result for AMMVs previously conjectured by us.  We also investigate several alternating multiple $S$- and $T$-values by establishing some explicit relations of integrals involving arctangent function. At the end, we compute the dimensions of a few interesting subspaces of AMMVs for weight less than 9. Supported by theoretical and numerical evidence aided by numerical and symbolic computation, we formulate a few conjectures concerning the dimensions of the above-mentioned subspaces of AMMVs. These conjectures hint at a few very rich but previously overlooked algebraic and geometric structures associated with these vector spaces.

\medskip
\noindent{\bf Keywords}: (colored) multiple zeta values; (alternating) multiple mixed values; (alternating) multiple $S$-values; (alternating) multiple $T$-values; regularization; duality; parity.

\medskip
\noindent{\bf AMS Subject Classifications (2020):} 11M32, 11B39, 11M99, 68W30.

\tableofcontents

\section{Introduction}
We begin with some basic notation. Let $\N$, $\R$ and $\CC$ be the set of positive integers, the set of real numbers and complex numbers, respectively, and $\N_0:=\N\cup \{0\}$. For a finite sequence ${\bfk} =\bfk_r:= (k_1,\ldots, k_r)$ of positive integers, we put
\begin{equation*}
|{\bfk}|:=k_1+\cdots+k_r,\quad \dep({\bfk}):=r,
\end{equation*}
and call them the \emph{weight} and the \emph{depth} of ${\bfk}$, respectively. If $k_1>1$, ${\bfk}$ is called \emph{admissible}.

\subsection{Multiple mixed values and their alternating version}

In \cite{XuZhao2020a}, we define the \emph{multiple mixed values} (or \emph{multiple $M$-values}, MMVs) for an admissible $\bfk=(k_1,\ldots,k_r)$ and $\bfeps=(\eps_1,\ldots,\eps_r)\in\{\pm1\}^r$ by
\begin{align*}
M^\bfeps(k_1,\ldots,k_r)&:=\sum_{m_1>\cdots>m_r>0} \frac{(1+\eps_1(-1)^{m_1})\cdots (1+\eps_r(-1)^{m_r})}{m_1^{k_1}\cdots m_r^{k_r}}\in \R\\
={}&\int_0^1 \om_0^{k_1-1}\om_{\eps_1\eps_2}\om_0^{k_2-1}\om_{\eps_2\eps_3}\cdots \om_0^{k_{r-1}-1}\om_{\eps_{r-1}\eps_r}\om_0^{k_r-1}\om_{\eps_r},
\end{align*}
where
\begin{equation*}
\om_0:=\frac{dt}{t},\quad \om_{-1}:=\frac{2dt}{1-t^2},\quad \om_1:=\frac{2tdt}{1-t^2}.
\end{equation*}
The theory of iterated integrals was developed first by K.T. Chen in the 1960's. It has played important roles in the study of algebraic topology and algebraic geometry  since then. Its simplest form is
\begin{align*}
\int_0^1 f_1(t)dtf_{2}(t)dt\cdots f_p(t)dt:={}&\, \int\limits_{1>t_1>\cdots>t_p>0}f_1(t_1)f_{2}(t_{2})\cdots f_p(t_p)dt_1dt_2\cdots dt_p.
\end{align*}
One can extend these to iterated integrals over any piecewise smooth path on the complex plane via pull-backs.
We refer the interested reader to Chen's original work \cite{KTChen1977} for more details.

We now recall the definition of a few classical objects. For admissible $\bfk:=(k_1,\ldots,k_r)\in \N^r$,
the \emph{multiple zeta values} (MZVs) are defined by (cf. \cite{H1992,DZ1994})
\begin{align*}
\zeta(\bfk)\equiv \zeta(k_1,\ldots,k_r):=\sum_{n_1>\cdots>n_r\geq 1}
\frac{1}{n_1^{k_1}\cdots n_r^{k_r}}\in \R,
\end{align*}
Hoffman's \emph{multiple $t$-values} (MtVs) are defined by (see \cite{H2019})
\begin{align*}
t(\bfk):=\sum_{n_1>\cdots>n_r>0\atop n_i\ \text{odd}} \frac{1}{n_1^{k_1}\cdots n_r^{k_r}}
={}&\sum_{n_1>\cdots>n_r>0} \frac{1}{(2n_1-1)^{k_1}(2n_2-1)^{k_2}\cdots (2n_r-1)^{k_r}},
\end{align*}
the \emph{multiple $S$-values} (MSVs) are defined by (see \cite{XuZhao2020a})
\begin{align*}
S(\bfk):=\sum_{n_1>\cdots>n_r>0\atop n_i\equiv r-i\ (\text{mod}\ 2)} \frac{2^r}{n_1^{k_1}\cdots n_r^{k_r}}
={}&\sum_{n_1>\cdots>n_r>0} \frac{2^r}{(2n_1-r+1)^{k_1}(2n_2-r+2)^{k_1}\cdots (2n_r)^{k_r}},
\end{align*}
and the Kaneko-Tsumura \emph{multiple $T$-values} (MTVs) are defined by (see \cite{KanekoTs2019})
\begin{align*}
T(\bfk):=\sum_{n_1>\cdots>n_r>0\atop n_i\equiv r-i+1\ (\text{mod}\ 2)} \frac{2^r}{n_1^{k_1}\cdots n_r^{k_r}}
={}&\sum_{n_1>\cdots>n_r>0} \frac{2^r}{(2n_1-r)^{k_1}(2n_2-r+1)^{k_2}\cdots (2n_r-1)^{k_r}}.
\end{align*}
As a normalized version of the MtVs, we call
\begin{align*}
\t(\bfk):=\sum_{n_1>n_2>\cdots>n_r\geq 1}\frac{1}{(n_1-1/2)^{k_1}(n_2-1/2)^{k_2}\cdots(n_r-1/2)^{k_r}}
={}&2^{|\bfk|}t(\bfk)
\end{align*}
the \emph{multiple $\t$-values}. According to the definitions, $\t(k)=2^st(k)=\ze_H(k;1/2)=(2^k-1)\zeta(k)$ for any integer $k\geq 2$, where $\ze_H(s;a)$ is the \emph{Hurwitz zeta function} and $\zeta(s)$ is the \emph{Riemann zeta function}.

We now list some important special cases of the MMVs.
\begin{itemize}

\item MMVs satisfy the series stuffle relations and integral shuffle
relations;

\item All $\eps_j=1$ in MMVs $M^\bfeps(\bfk)$ $\Longrightarrow$ MZVs$\times \frac{1}{2^{|\bfk|-r}}$;

\item All $\eps_j=-1$ in MMVs $M^\bfeps(\bfk)$ $\Longrightarrow$ MtVs$\times 2^r$;

\item All $\eps_j=(-1)^{r-j}$ in MMVs $M^\bfeps(\bfk)$ $\Longrightarrow$ MSVs;

\item All $\eps_j=(-1)^{r+1-j}$ in MMVs $M^\bfeps(\bfk)$ $\Longrightarrow$ MTVs.

\end{itemize}

The systematic study of MZVs began with the works of Hoffman \cite{H1992} and Zagier \cite{DZ1994} in the early 1990s. Due to their surprising and sometimes mysterious appearance in the study of many branches of mathematics and theoretical physics, these special values have attracted a lot of attention and interest in the past three decades (for example, see the book by the third author \cite{Zhao2016}).
For Hoffman's MtVs and Kaneko-Tsumura's MTVs, a number of mathematicians also studied their various formulas similar to MZVs by applying a variety of methods. In  \cite{LX2020,Takeyama2021,Z2015} (weighted)-sum formulas were studied,
in  \cite{BCJXXZ,KanekoTs2022} double and triple $T$-values were studied
(we will consider triple AMMV's in a broader context in \S 4). Some explicit evaluations of more general (alternating) triple $T$-values and (alternating) triple $t$-values are given in \cite{XuWang2020}. A symmetry result for MtV's was shown in \cite{ChaHpoff2022} and certain special values of MtV's were given in  \cite{Cha2022,Cha2022-1,Mura21,XuZhao2023Aug} some of
which were applied to show results concerning the motivic basis and motivic descent.

All the above variants of MZVs are level two cases of the objects considered by Yuan and the third author in \cite[(2.1)]{YuanZh2014a} where they define more generally the MZVs of level $N$.
In general, let $\bfk=(k_1,\ldots,k_r)\in\N^r$ and $\bfz=(z_1,\dotsc,z_r)$, where $z_1,\dotsc,z_r$ are $N$th roots of unity. We can define the \emph{colored MZVs} (CMZVs) of level $N$ by
\begin{equation}\label{equ:defnMPL}
\Li_{\bfk}(\bfz):=\sum_{n_1>\cdots>n_r>0}
\frac{z_1^{n_1}\dots z_r^{n_r}}{n_1^{k_1} \dots n_r^{k_r}}\in \CC,
\end{equation}
which converge if $(k_1,z_1)\ne (1,1)$ (see \cite{YuanZh2014a} and \cite[Ch. 15]{Zhao2016}), in which case we call $({\bfk};\bfz)$ \emph{admissible}. The level two colored MZVs are often called \emph{Euler sums} or \emph{alternating MZVs}. In this case, namely,
when $(z_1,\dotsc,z_r)\in\{\pm 1\}^r$ and $(k_1,z_1)\ne (1,1)$, we set
$\zeta(\bfk;\bfz)= \Li_\bfk(\bfz)$. Further, we put a bar on top of
$k_{j}$ if $z_{j}=-1$. For example,
\begin{equation*}
\zeta(\bar2,3,\bar1,4)=\zeta(2,3,1,4;-1,1,-1,1).
\end{equation*}
More generally, for any $(k_1,\dotsc,k_r)\in\N^r$, the \emph{classical multiple polylogarithm function} (MPL) with $r$-variables is defined by
\begin{align*}
\Li_{k_1,\dotsc,k_r}(x_1,\dotsc,x_r):=\sum_{n_1>n_2>\cdots>n_r>0} \frac{x_1^{n_1}\dotsm x_r^{n_r}}{n_1^{k_1}\dotsm n_r^{k_r}}
\end{align*}
which converges if $|x_1\cdots x_j|<1$ for all $j=1,\dotsc,r$. It can be analytically continued to a multi-valued
meromorphic function on $\CC^r$ (see \cite{Zhao2007d}). In particular, if $x_1=x,x_2=\cdots=x_r=1$, then 
$\Li_{k_1,\ldots,k_r}(x,\{1\}_{r-1})$ is the \emph{single-variable multiple polylogarithm function}, also 
called \emph{generalized polylogarithm} by some authors. Here $\{S\}_m$ denotes the sequence obtained by 
repeating the string $S$ exactly $m$ times. To save space, for a number such as 1 we also write $1_m=\{1\}_m$.

We now consider the MMVs in some more details.
By abuse of notation it is sometimes more transparent to write
$\eps=\ev$ if $\eps=1$ and $\eps=\od$ if $\eps=-1$. To save space, we may put a check on top of $s_j$ if and only if $\eps_j=\od$. For example,
\begin{equation*}
M(\wc{s_1},s_2)=M^{\od\ev}(s_1,s_2)=\sum_{\substack{m_1>m_2>0\\ m_1\in\od,m_2\in\ev}} \frac{4}{m_1^{s_1} m_2^{s_2}}.
\end{equation*}

To extend the ideas in \cite{XuZhao2020b}, we can also define the \emph{alternating multiple mixed values} (AMMVs) for any $\bfs=(s_1,\ldots,s_r)\in\N^r$, $\bfeps=(\eps_1, \dots, \eps_r)\in\{\pm 1\}^r$, and $\bfgs=(\sigma_1, \dots, \sigma_r)\in\{\pm 1\}^r$ by
\begin{equation*}
M_\bfgs^\bfeps(\bfs):=\sum_{m_1>\cdots>m_r>0} \frac{(1+\eps_1(-1)^{m_1})\sigma_1^{(2m_1+1-\eps_1)/4} \cdots (1+\eps_r(-1)^{m_r})\sigma_r^{(2m_r+1-\eps_r)/4}}{m_1^{s_1} \cdots m_r^{s_r}}
\end{equation*}
which converge if and only if $(s_1,\sigma_1)\neq (1,1)$.
As usual, we call $s_1+\cdots+s_r$ and $r$ the \emph{weight} and \emph{depth}, respectively. We adopt the same convention
for Euler sums by putting a bar on top of $s_j$ if $\sigma_j=-1$, and then set $\sigma_j=\pm$ if $\sigma_j=\pm1$. For example,
\begin{align*}
M(\wc{\bar{a}},\bar{b},\wc{c},d)={}&\, M_{--++}^{\od\ev\od\ev}(a,b,c,d)
=\sum_{\substack{m_1>m_2>m_3>m_4>0\\ m_1,m_3\in\od,\ m_2,m_4\in\ev}}
\frac{16(-1)^{(m_1+1)/2} (-1)^{m_2/2}}{m_1^a m_2^b m_3^c m_4^d}.
\end{align*}
It is not too hard to see that the AMMVs is intimately related to level four CMZVs.

We now turn to the iterated integrals of AMMVs. Set
\begin{equation}\label{equ:1-forms}
\om_0:=\frac{dt}{t},
\quad \om_{+1}^{-1}:=\frac{2dt}{1-t^2},
\quad \om_{-1}^{-1}:=\frac{-2dt}{1+t^2},
\quad \om_{+1}^{+1}:=\frac{2tdt}{1-t^2},
\quad \om_{-1}^{+1}:=\frac{-2tdt}{1+t^2},
\end{equation}
and
\begin{equation}\label{equ:Wdefn}
W_{\sigma}^{\eps_1,\eps_2}:=
\left\{
  \begin{array}{ll}
  -\om_{\sigma}^{\eps_1\eps_2},\quad\  &  \hbox{if $\sigma=\eps_2=-\eps_1=-1$;} \\
  \om_{\sigma}^{\eps_1\eps_2}, \quad & \hbox{otherwise.}
  \end{array}
\right.
\end{equation}
It is straight-forward to deduce that AMMVs can be expressed by the following iterated integrals
\begin{equation}\label{AMMV-iterated-integrals}
M_\bfgs^{\bfeps}(\bfs)=
\int_0^1 \om_0^{s_1-1}W_{\sigma_1}^{\eps_1,\eps_2}
\cdots
\om_0^{s_{r-1}-1}W_{\sigma_1\sigma_2\cdots\sigma_{r-1}}^{\eps_{r-1},\eps_r}
\om_0^{s_r-1}\om_{\sigma_1\sigma_2\cdots\sigma_r}^{\eps_r}.
\end{equation}
For example,
\begin{align*}
M(\wc{3},\bar2)=M_{+-}^{\od\ev}(3,2)=\sum_{n_1>n_2>0}\frac{4(-1)^{n_2}}{(2n_1-1)^3(2n_2)^{2}}
=\int_{0}^{1}\om_0^2\om_{+1}^{-1} \om_0\om_{-1}^{+1}
\end{align*}
and
\begin{align*}
M(\bar2,3,\wc{\bar4})
=\sum_{n_1>n_2>n_3>0}\frac{8(-1)^{n_1+n_3-1}}
{(2n_1-2)^{2}(2n_2-2)^{3}(2n_3-1)^{4}}
=\int_{0}^{1} \om_0\om_{-1}^{+1}\om_0^{2}(-\om_{-1}^{-1})\om_0^{3}\om_{+1}^{-1}.
\end{align*}
For $\bfgs,\bfeps\in\{\pm 1\}^r$ define $\eta_j:=\prod_{n=j}^r \eps_n$ for $j=1,\dots,r$ and
\begin{equation}\label{equ:sgn_bfgs_bfeps}
\sgn(\bfgs,\bfeps):=(-1)^{\sharp\{l<r \mid \sigma_{l}=\eps_{l}=\eta_{l+1}=-1\}}.
\end{equation}
Then for all $\bfk=(k_1,\dots,k_r)\in\N^r$ with $(k_1,\sigma_1)\ne(1,1)$, we have
\begin{align}\label{AMMV-iterated-integrals-duality}
\int_0^1 \om_0^{k_1-1}\om_{\sigma_1}^{\eps_1}\cdots \om_0^{k_r-1}\om_{\sigma_r}^{\eps_r}
=\sgn(\bfgs,\bfeps)M_{\, \sigma_1, \sigma_2\sigma_1,\ldots, \sigma_r\sigma_{r-1}}
^{\eta_1, \ \ \eta_2\ ,\ldots,\ \eta_r\ }(\bfk).
\end{align}
For example, if $\bfeps=(1,-1,-1,1)$ and $\bfgs=(-1,-1,-1,1)$ then $\sgn(\bfgs,\bfeps)=-1$ and
we see that
\begin{align*}
-\int_{0}^{1}
\om_0^{k_1-1}\om_{-1}^{+1}\om_0^{k_2-1}\om_{-1}^{-1}\om_0^{k_3-1}\om_{-1}^{-1}\om_0^{k_4-1}\om_{+1}^{+1}
=-M_{-++-}^{\ev\od\ev\ev}(k_1,k_2,k_3,k_4).
\end{align*}
By Chen's theory of iterated integrals (see e.g., \cite[Ch 4]{Zhao2016}) we know that
AMMVs must satisfy the integral shuffle relations. For example,
\begin{align} \label{equ:finDBSF1}
 M(\cbar1)M(\cbar2)
=\int_{0}^{1} \om_{-1}^{-1} \int_{0}^{1} \om_0\om_{-1}^{-1}
=\int_{0}^{1} \om_{-1}^{-1}\om_0\om_{-1}^{-1}+2\int_{0}^{1} \om_0\om_{-1}^{-1}\om_{-1}^{-1}
={}&-M(\bar1,\wc2)-2M(\bar2,\wc1)
\end{align}
and
\begin{multline*}
 M(\bar3)M(\wc{2})= M_{-}^{\ev}(3)M_{+}^{\od}(2)=\int_{0}^{1} \om_0^2\om_{-1}^{+1} \int_{0}^{1} \om_0\om_{+1}^{-1}\\
=\!\int_{0}^{1} \om_0^2\om_{-1}^{+1}\om_0\om_{+1}^{-1}+\int_{0}^{1} \om_0\om_{+1}^{-1}\om_0^2\om_{-1}^{+1}+2\int_{0}^{1} \om_0^2\om_{+1}^{-1}\om_0\om_{-1}^{+1} 
+3\int_{0}^{1} \om_0^3\om_{+1}^{-1}\om_{-1}^{+1}+3\int_{0}^{1} \om_0^3\om_{-1}^{+1}\om_{+1}^{-1}\\
=M(\cbar3,\cbar2)+M(\wc2,\bar3)+2M(\wc3,\bar2)+3M(\wc4,\bar1)+3M(\cbar4,\cbar1).
\end{multline*}
Moreover, by considering how two sets of summation indices $m_1>\cdots>m_r$ and $m_1'>\cdots>m_r'$ interleave
(also taking into account their parity restrictions), it is clear that AMMVs also satisfy the series stuffle relations. For examples,
\begin{align}
M(\cbar1)M(\cbar2)={}&M(\cbar1,\cbar2)+M(\cbar2,\cbar1)+2M(\wc3), \label{equ:finDBSF2}\\
M(\bar2,3,\cbar4)M(\cbar2)={}&M(\bar2,3,\cbar4,\cbar2)+M(\bar2,3,\cbar2,\cbar4)
+M(\bar2,\cbar2,3,\cbar4)+M(\cbar2,\bar2,3,\cbar4)+2M(\bar2,3,\wc6),\nonumber\\
M(\bar1,\cbar2)M(3,\cbar2)={}&M(\bar1,\cbar2,3,\cbar2)+M(3,\cbar2,\bar1,\cbar2)
+M(\bar1,3,\cbar2,\cbar2)+M(3,\bar1,\cbar2,\cbar2)\nonumber\\
&+2M(\bar1,3,\wc4)+2M(3,\bar1,\wc4)+2M(\bar4,\cbar2,\cbar2)+4M(\bar4,\wc4).\nonumber
\end{align}
Note that stuffing can happen only when two components from two indices correspond to two equal $\eps$'s. For each incidence of such stuffings, the two signs (i.e., $\sigma$'s) corresponding to the two index components are multiplied and an extra factor of 2 is produced.

Combining \eqref{equ:finDBSF1} and \eqref{equ:finDBSF2} we obtain a so-called finite double shuffle relation:
\begin{equation}\label{equ:finDBSFeg}
M(\cbar1,\cbar2)+M(\cbar2,\cbar1)+2M(\wc3)+M(\bar1,\wc2)+2M(\bar2,\wc1)=0.
\end{equation}
It is one of the main goals of this paper to find as many possible $\Q$-linear relations among AMMVs as possible.
From the theory of MZVs and more generally CMZVs, we know it is insufficient to consider only convergent AMMVs,
which motivates us to study their regularization.

\subsection{Alternating multiple $S$/$T$-harmonic sums}

For positive integers $r$ and $n$ such that $n\ge r$, we define the following subsets
of $\N^m$:
\begin{align*}
&D_{n,r} :=
\left\{
\begin{array}{ll}
\Big\{(n_1,\dotsc,n_r)\in\N^{r} \mid n\geq n_1 >n_2\geq \cdots \geq n_{r-2}>n_{r-1}\geq n_r>0 \Big\},\phantom{\frac12}\ & \hbox{if $2\nmid r$;} \\
\Big\{(n_1,\dotsc,n_r)\in\N^{r} \mid n> n_1 \geq n_2> \cdots \geq n_{r-2}>n_{r-1}\geq n_r>0 \Big\},\phantom{\frac12}\ & \hbox{if $2\mid r$,}
\end{array}
\right.  \\
&E_{n,r} :=
\left\{
\begin{array}{ll}
\Big\{(n_1,\dotsc,n_r)\in\N^{r}\mid n> n_1 \geq n_2>\cdots > n_{r-2}\geq n_{r-1}>n_r>0 \Big\},\phantom{\frac12}\ & \hbox{if $2\nmid r$;} \\
\Big\{(n_1,\dotsc,n_r)\in\N^{r}\mid n\geq n_1 >n_2\geq \cdots > n_{r-2}\geq n_{r-1}>n_r>0  \Big\}, \phantom{\frac12}\ & \hbox{if $2\mid r$.}
\end{array}
\right.
\end{align*}

\begin{defn} For positive integer $m$ and $\bfgs_{r}:=(\sigma_1,\sigma_2,\ldots,\sigma_r)\in \{\pm 1\}^r$, define
\begin{align}
&S^{\bfgs_{2m-1}}_n({\bfk_{2m-1}}):= \sum_{\bfn\in E_{n,2m-1}} \frac{2^{2m-1}\sigma_1^{n_1}\sigma_2^{n_2}\cdots \sigma_{2m-1}^{n_{2m-1}}}{(\prod_{j=1}^{m-1} (2n_{2j-1})^{k_{2j-1}}(2n_{2j}-1)^{k_{2j}})(2n_{2m-1})^{k_{2m-1}}},\label{AMOS}\\
&S^{\bfgs_{2m}}_n({\bfk_{2m}}):= \sum_{\bfn\in E_{n,2m}} \frac{2^{2m}\sigma_1^{n_1}\sigma_2^{n_2}\cdots \sigma_{2m}^{n_{2m}}}{\prod_{j=1}^{m} (2n_{2j-1}-1)^{k_{2j-1}}(2n_{2j})^{k_{2j}}},\label{AMES}\\
&T^{\bfgs_{2m-1}}_n({\bfk_{2m-1}}):= \sum_{\bfn\in D_{n,2m-1}} \frac{2^{2m-1}\sigma_1^{n_1}\sigma_2^{n_2}\cdots  \sigma_{2m-1}^{n_{2m-1}}}{(\prod_{j=1}^{m-1} (2n_{2j-1}-1)^{k_{2j-1}}(2n_{2j})^{k_{2j}})(2n_{2m-1}-1)^{k_{2m-1}}},\label{AMOT}\\
&T^{\bfgs_{2m}}_n({\bfk_{2m}}):= \sum_{\bfn\in D_{n,2m}} \frac{2^{2m}\sigma_1^{n_1}\sigma_2^{n_2}\cdots \sigma_{2m}^{n_{2m}}}{\prod_{j=1}^{m} (2n_{2j-1})^{k_{2j-1}}(2n_{2j}-1)^{k_{2j}}},\label{AMET}
\end{align}
where $T^{\bfgs_{2m}}_n({\bfk_{2m}})=S^{\bfgs_{2m-1}}_n({\bfk_{2m-1}})=S^{\bfgs_{2m}}_n({\bfk_{2m}}):=0$ if $n\leq m$, 
and $T^{\bfgs_{2m-1}}_n({\bfk_{2m-1}}):=0$ if $n<m$. Moreover, we set $S^{\bfgs}_n(\emptyset)=T^{\bfgs}_n(\emptyset):=1$ 
for convenience. We call \eqref{AMOS} and \eqref{AMES} \emph{alternating multiple $S$-harmonic sums},
and call \eqref{AMOT} and \eqref{AMET} \emph{alternating multiple $T$-harmonic sums}.
\end{defn}

According to the definitions of alternating multiple $S$-harmonic sums and alternating multiple $T$-harmonic sums, we have the following relations
\begin{align}
\label{relations-odd-alternatingmultipleS-harmonicsums}
&S^{\sigma_1,\sigma_2,\ldots,\sigma_{2m-1}}_n(k_1,k_2,\ldots,k_{2m-1})=2\sum_{j=1}^{n-1} \frac{S^{\sigma_2,\sigma_3,\ldots,\sigma_{2m-1}}_j(k_2,k_3,\ldots,k_{2m-1})}{(2j)^{k_{1}}}\sigma_1^j,\\
\label{relations-even-alternatingmultipleS-harmonicsums}
&S^{\sigma_1,\sigma_2,\ldots,\sigma_{2m}}_n(k_1,k_2,\ldots,k_{2m})=2\sum_{j=1}^{n} \frac{S^{\sigma_2,\sigma_3,\ldots,\sigma_{2m}}_j(k_2,k_3,\ldots,k_{2m})}{(2j-1)^{k_{1}}}\sigma_1^j,\\
\label{relations-odd-alternatingmultipleT-harmonicsums}
&T^{\sigma_1,\sigma_2,\ldots,\sigma_{2m-1}}_n(k_1,k_2,\ldots,k_{2m-1})=2\sum_{j=1}^{n} \frac{T^{\sigma_2,\sigma_3,\ldots,\sigma_{2m-1}}_j(k_2,k_3,\ldots,k_{2m-1})}{(2j-1)^{k_{1}}}\sigma_1^j,\\
\label{relations-even-alternatingmultipleT-harmonicsums}
&T^{\sigma_1,\sigma_2,\ldots,\sigma_{2m}}_n(k_1,k_2,\ldots,k_{2m})=2\sum_{j=1}^{n-1} \frac{T^{\sigma_2,\sigma_3,\ldots,\sigma_{2m}}_j(k_2,k_3,\ldots,k_{2m})}{(2j)^{k_{1}}}\sigma_1^j.
\end{align}

By direct calculations we obtain
\begin{align*}
&S^{\sigma_1,\ldots,\sigma_{2m-1}}_n(k_1,k_2,\ldots,k_{2m-1})
=\sigma_1^{m-1}\prod\limits_{j=1}^{m-2} (\sigma_{2j}\sigma_{2j+1})^{m-j-1} \sum_{\substack{n+m>n_1>n_2> \\ \cdots>n_{2m-1}>0}} \frac{2^{2m-1}\sigma_1^{n_1}\sigma_2^{n_2}\cdots \sigma_{2m-1}^{n_{2m-1}}}{\prod\limits_{j=1}^{2m-1} (2n_j-2m+j+1)^{k_j}}\nonumber\\[-.5em]
={}&\sum_{\substack{2n>n_1>n_2>\\ \cdots>n_{2m-1}>0}} \!\!\! \frac{\prod\limits_{j=1}^{m-1}\left(1\!+\!(-1)^{n_{2j-1}}\right)\sigma_{2j-1}^{n_{2j-1}/2}\left(1\!-\!(-1)^{n_{2j}}\right)\sigma_{2j}^{(n_{2j}+1)/2}}{n_1^{k_1}n_2^{k_2}\cdots n_{2m-2}^{k_{2m-2}}n_{2m-1}^{k_{2m-1}}}\cdot \left(1\!+\!(-1)^{n_{2m-1}}\right)\sigma_{2m-1}^{n_{2m-1}/2},\\[1.5em]
&S^{\sigma_1,\ldots,\sigma_{2m}}_n(k_1,k_2,\ldots,k_{2m})
=\prod\limits_{j=1}^{m-1} (\sigma_{2j-1}\sigma_{2j})^{m-j} \sum_{\substack{n+m>n_1>n_2> \\ \cdots>n_{2m}>0}} \frac{2^{2m}\sigma_1^{n_1}\sigma_2^{n_2}\cdots \sigma_{2m}^{n_{2m}}}{\prod\limits_{j=1}^{2m} (2n_j-2m+j)^{k_j}}\nonumber\\[-.5em]
={}&\sum_{\substack{2n>n_1>n_2>\\ \cdots>n_{2m}>0}} \!\!\! \frac{\prod\limits_{j=1}^{m}\left(1\!-\!(-1)^{n_{2j-1}}\right)\sigma_{2j-1}^{(n_{2j-1}+1)/2}\left(1\!+\!(-1)^{n_{2j}}\right)\sigma_{2j}^{n_{2j}/2}}{n_1^{k_1}n_2^{k_2}\cdots n_{2m-1}^{k_{2m-1}}n_{2m}^{k_{2m}}},\\[1.5em]
&T^{\sigma_1,\ldots,\sigma_{2m-1}}_n(k_1,k_2,\ldots,k_{2m-1})
=\prod\limits_{j=1}^{m-1} (\sigma_{2j-1}\sigma_{2j})^{m-j} \sum_{\substack{n+m>n_1>n_2> \\ \cdots>n_{2m-1}>0}} \frac{2^{2m-1}\sigma_1^{n_1}\sigma_2^{n_2}\cdots \sigma_{2m-1}^{n_{2m-1}}}{\prod\limits_{j=1}^{2m-1} (2n_j-2m+j)^{k_j}}\nonumber\\[-.5em]
={}&\sum_{\substack{2n>n_1>n_2>\\ \cdots>n_{2m-1}>0}} \!\!\! \frac{\prod\limits_{j=1}^{m-1}\left(1\!-\!(-1)^{n_{2j-1}}\right)
\sigma_{2j-1}^{(n_{2j-1}+1)/2}\left(1\!+\!(-1)^{n_{2j}}\right)\sigma_{2j}^{n_{2j}/2}}{n_1^{k_1}n_2^{k_2}\cdots n_{2m-2}^{k_{2m-2}}n_{2m-1}^{k_{2m-1}}}\cdot \left(1\!-\!(-1)^{n_{2m-1}}\right)\sigma_{2m-1}^{(n_{2m-1}+1)/2},\\[1.5em]
&T^{\sigma_1,\ldots,\sigma_{2m}}_n(k_1,k_2,\ldots,k_{2m})
=\prod\limits_{j=1}^{m-1} (\sigma_{2j-1})^{m-j+1}(\sigma_{2j})^{m-j} \sum_{\substack{n+m>n_1>n_2> \\ \cdots>n_{2m}>0}} \frac{2^{2m}\sigma_1^{n_1}\sigma_2^{n_2}\cdots \sigma_{2m}^{n_{2m}}}{\prod\limits_{j=1}^{2m} (2n_j-2m-1+j)^{k_j}}\nonumber\\[-.5em]
={}&\sum_{\substack{2n>n_1>n_2>\\ \cdots>n_{2m}>0}} \!\!\! \frac{\prod\limits_{j=1}^{m}\left(1\!+\!(-1)^{n_{2j-1}}\right)\sigma_{2j-1}^{n_{2j-1}/2}\left(1\!-\!(-1)^{n_{2j}}\right)\sigma_{2j}^{(n_{2j}+1)/2}}{n_1^{k_1}n_2^{k_2}\cdots n_{2m-1}^{k_{2m-1}}n_{2m}^{k_{2m}}}.
\end{align*}
Hence, for any $\bfk=(k_1,\ldots,k_r)\in\N^r$ and $\bfgs:=(\sigma_1,\ldots,\sigma_r)\in \{\pm 1\}^r$
with $(s_1,\sigma_1)\neq (1,1)$, we may define the \emph{alternating MSVs} (AMSVs) and \emph{alternating MTVs} (AMTVs) by
\begin{equation}\label{defn-AMTVs}
S^{\bfgs}(\bfk):=\lim_{n\rightarrow \infty }S_n^{\bfgs}(\bfk), \qquad
T^{\bfgs}(\bfk):=\lim_{n\rightarrow \infty }T_n^{\bfgs}(\bfk).
\end{equation}
We may compactly indicate the presence of an alternating sign as before:
if $\sigma_j=-1$ then we place a bar over the corresponding component $k_j$. For example,
\begin{equation*}
S(\bar 1,3,\bar 2,4,\bar 2)=S^{-1,1,-1,1,-1}(1,3,2,4,2)\quad \text{and}\quad T(\bar 2,3,\bar 1,4)=T^{-1,1,-1,1}(2,3,1,4).
\end{equation*}
Obviously, AMMVs can have, similar to MMVs, the following four special cases
\begin{itemize}

\item All $\eps_j=1$ in AMMVs $M_\bfgs^{\bfeps}(\bfk)$ $\Longrightarrow$ AMZVs$\times \frac{1}{2^{k_1+k_2+\cdots+k_r-r}}$;

\item All $\eps_j=-1$ in AMMVs $M_\bfgs^{\bfeps}(\bfk)$ $\Longrightarrow$ AMtVs$\times 2^r$;

\item All $\eps_j=(-1)^{r-j}$ in AMMVs $M_\bfgs^{\bfeps}(\bfk)$ $\Longrightarrow$ AMSVs;

\item All $\eps_j=(-1)^{r+1-j}$ in AMMVs $M_\bfgs^{\bfeps}(\bfk)$ $\Longrightarrow$ AMTVs.
\end{itemize}
For the detailed definition and introduction of \emph{alternating MZVs} (AMZVs) and \emph{alternating MtVs} (AMtVs), 
see \cite{H2019,Zhao2016}. Let $\AMZV_w$ be the $\Q$-span of all the AMZVs of weight $w$. 
Setting $\dim_\Q \AMZV_0=1$, we have the dimension bound $\dim_\Q \AMZV_w\le F_w$ obtained by Deligne and Goncharov \cite{DeligneGo2005} 
and moreover a set of generators of $\AMZV_w$ shown by Deligne \cite[Thm. 7.2]{Deligne2010}, where $F_0=F_1=1$ and 
$F_n=F_{n-1}+F_{n-2}$ for all $n\ge 2$. In particular, in a recent paper, S. Charlton \cite{Cha2022} proved the 
AMtVs $t(\{\bar1\}_a,1,\{\bar1\}_b)\ (a,b\in\N)$ can be expressed in terms of $\log2$, Riemann zeta values and 
Dirichlet beta values as defined by \eqref{defn-Dbeta-function}.

According to the definitions of AMSVs and AMTVs, for any $r\in\N$, we have the following iterated integral expressions:
\begin{align*}
&S^{\sigma_1,\sigma_2,\ldots,\sigma_r}(k_1,k_2,\ldots,k_r)=M_{\sigma_1,\sigma_2,\ldots,\sigma_r}^{\ \ldots\ \od\ev\od\ev}(k_1,k_2,\ldots,k_r)\\
={}&\left\{\prod_{j=1}^{[(r-1)/2]} (\sigma_1\cdots \sigma_{2j-(1-\pari(r))/2})\right\}\int_0^1 \om_0^{k_1-1}\om_{\sigma_1}^{-1}\om_0^{k_2-1}\om_{\sigma_1\sigma_2}^{-1}\cdots \om_0^{k_{r-1}-1}\om_{\sigma_1\cdots\sigma_{r-1}}^{-1} \om_0^{k_r-1}\om_{\sigma_1\cdots\sigma_r}^{+1},\\[1em]
&T^{\sigma_1,\sigma_2,\ldots,\sigma_r}(k_1,k_2,\ldots,k_r)=M_{\sigma_1,\sigma_2,\ldots,\sigma_r}^{\ \ldots\ \ev\od\ev\od}(k_1,k_2,\ldots,k_r)\\
={}&\left\{\prod_{j=1}^{[r/2]} (\sigma_1\cdots \sigma_{2j-(1+\pari(r))/2})\right\}\int_0^1 \om_0^{k_1-1}\om_{\sigma_1}^{-1}\om_0^{k_2-1}\om_{\sigma_1\sigma_2}^{-1}\cdots \om_0^{k_r-1}\om_{\sigma_1\cdots\sigma_r}^{-1},
\end{align*}
where $\pari(r)=-1$ if $r$ is odd and $\pari(r)=1$ if $r$ is even.

\subsection{Main results}

The primary goals of this paper are to study the explicit relations of AMMVs and some of their special types, and
establish a few explicit evaluations of the AMMVs and related values via AMMVs of lower depths.

The remainder of this paper is organized as follows.

In Sections \ref{sec:AMMV} and \ref{RAMMVs} we find the series stuffle relations and integral shuffle relations of AMMVs, and set up the algebraic framework for the regularized double shuffle relations (DBSFs) of AMMVs. As an important application, we confirm our previous parity conjecture \cite[Conj. 5.2]{XuZhao2020b}
on AMMVs in Theorem~\ref{thm:MMVparity}. Moreover, we give some specific examples to illustrate this phenomenon.

In Section \ref{AMTVs-AMSVs}, we first prove four integral identities involving the arctangent function. Then we apply these formulas obtained to establish two explicit relations between AMSVs and AMTVs.

In Section \ref{DCAMMVs}, we compute the dimensions of AMMVs, AMtVs, AMSVs and AMTVs for weight less than 9. Supported by both theoretical and numerical evidence, we also formulate a few conjectures concerning the exact dimensions of the above-mentioned subspaces of AMMVs. We end this paper with two conjectural diagrams showing the relations among the various vector spaces appearing in this paper.

The following diagram shows some combinatorial relations among CMZV, AMMV, AMTV, AMSV, etc.
\begin{center}
\includegraphics[height=2.8 in]{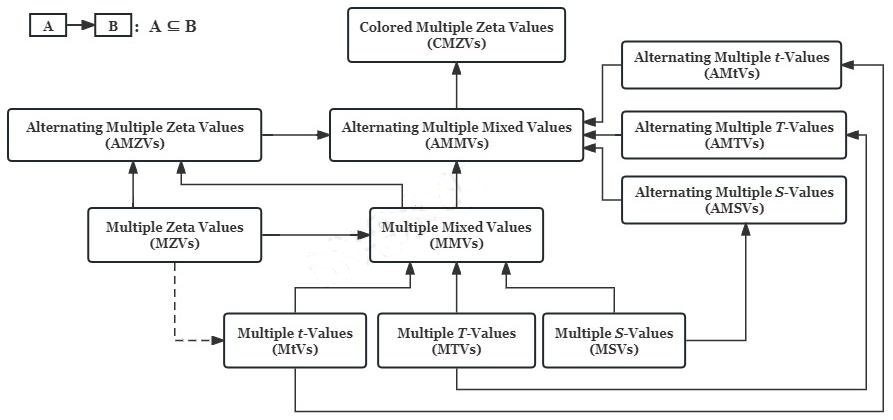}
\end{center}

As pointed out by the referee, there is a highly nontrivial inclusion on the motivic level denoted by the dashed curve
at the lower left corner of the diagram: every motivic MZV is a $\Q$-linear combination of motivic MtVs. In fact,
Murakami even showed in \cite{Mura21} that the set of MtVs $\{t(k_1,\dots, k_p) | k_1, \dots, k_p\in\{2, 3\}\}$
generates all of MZVs over $\Q$.

\section{Algebraic setup for AMMVs and finite double shuffle relations}\label{sec:AMMV}
We know that the AMMVs satisfy the series stuffle relations.
To define the stuffle relations of alternating multiple harmonic sums a double cover $\db$ of $\N$ is defined in \cite[Chapter 7]{Zhao2016}: $\db=\N\cup \bar\N=\N\cup \{\bar{n}: n\in\N\}$ which is equipped with a binary operation $\oplus$ such that $a\oplus b= \bar{a}\oplus \bar{b}:= a+b$ and $\bar{a}\oplus b=a\oplus \bar{b}:= \ol{a+b}$ for all  $a,b\in \N$.

We now define a double cover $\dk$ of $\db$: $\dk=\db\cup \wc{\db}=\db\cup\{\wc{s}: s\in\db\}$ together with a map extending the binary operation $\oplus$, still denoted by $\oplus$, such that $\wc{a}\oplus \wc{b}:=\wc{a\oplus b}$ for any $a,b\in \db$.

For every \emph{decorated positive integer} $k\in\dk$ (say $k=s,\bar{s},\wc{s}$ or $k=\wc{\bar{s}}$ for some $s\in\N$),
we define its \emph{absolute value, sign} and \emph{parity} by
\begin{equation*}
|k|=s,
\qquad
\sgn(k)=
\left\{
\begin{array}{ll}
1,  & \quad \hbox{if $k=s$ or $\wc{s}$;} \\
-1,  & \quad \hbox{if $k=\bar{s}$ or $\wc{\bar{s}}$,}
\end{array}
\right.
\qquad
\pari(k)=
\left\{
\begin{array}{ll}
1,  & \quad \hbox{if $k=s$ or $\bar{s}$;} \\
-1, & \quad \hbox{if $k=\wc{s}$ or $\wc{\bar{s}}$,}
\end{array}
\right.
\end{equation*}
respectively. For any $k\in\dk$ define the word of length $|k|$
$$\tz_k:=\tz_{|k|;\sgn(k)}^{\pari(k)}:=\om_0^{|k|-1} \om^{\pari(k)}_{\sgn(k)}.$$
One can now define an algebra of words as follows.
\begin{defn} \label{defn:fAlevel2}
Let $\setX$ be the alphabet consisting of the letters $\om_0$ and $\om_\sigma^\eps$ ($\sigma,\eps=\pm1$). A \emph{word} $\bfw$ is a monomial in the letters in $\setX$.
Its \emph{weight}, denoted by $|\bfw|$, is the number of letters contained in $\bfw$, and its \emph{depth},
denoted by $\dep(\bfw)$, is the number of $\om_\sigma^\eps$'s contained in $\bfw$.
Define the \emph{AMMV} algebra, denoted by $\fA$, to be the (weight) graded
noncommutative polynomial $\Q$-algebra generated by the words (including the empty word $\bfone$) over the alphabet $\setX$.
Let $\fA^0$ be the subalgebra of $\fA$ generated by words not
beginning with $\om_1^{\pm 1}$ and not ending with $\om_0$. The words in $\fA^0$
are called \emph{admissible words.}
\end{defn}

By \eqref{AMMV-iterated-integrals} every AMMV can be expressed as an iterated
integral over the closed interval $[0,1]$ of an admissible word. Thus, for
$\bfw=\tz_\bfk:=\tz_{k_1}\dots \tz_{k_r}\in\fA^0$, we set
\begin{equation*}
\evaM(\bfw):=\int_0^1 \bfw, \qquad M(\bfw):=M(\bfk).
\end{equation*}
We also extend $\evaM$ and $M$ to $\fA^0$ by $\Q$-linearity. Now we set $s_j=|k_j|$,
$\eps_j=\pari(k_j)$ and  $\sigma_j=\sgn(k_j)$ for all $j=1,\dots,r$ and define
\begin{align*}
\bfp (\bfw):={}& \om_0^{s_1-1}W_{\sigma_1}^{\eps_1,\eps_2}
\cdots
\om_0^{s_{r-1}-1}W_{\sigma_1\sigma_2\cdots \sigma_{r-1}}^{\eps_{r-1},\eps_r}
\om_0^{s_r-1}\om_{\sigma_1\sigma_2\cdots \sigma_r}^{\eps_r},\\
\bfq(\bfw):={}& \sgn(\bfgs,\bfeps) \om_0^{s_1-1}\om_{\sigma_1}^{\eta_1}
\om_0^{s_2-1}\om_{\sigma_1\sigma_2}^{\eta_2}
\cdots
\om_0^{s_r-1}\om_{ \sigma_{r-1}\sigma_r}^{\eta_r}
\end{align*}
where $W$ is defined by \eqref{equ:Wdefn}, $\sgn(\bfgs,\bfeps)$ is defined by \eqref{equ:sgn_bfgs_bfeps}, and $\eta_j=\prod_{n=j}^r \eps_n$ as before. Then it is easy to see that
\begin{equation}\label{equ:bfpbfqInv}
 \bfp\circ\bfq = \bfq\circ\bfp ={\rm id}
\end{equation}
is the identity map. From \eqref{AMMV-iterated-integrals} and \eqref{AMMV-iterated-integrals-duality} one has
\begin{equation} \label{equ:MMV2word}
M(\bfw)= \evaM \big(\bfp (\bfw)\big) \quad\text{and}\quad
\evaM(\bfw)= M\big(\bfq(\bfw)\big).
\end{equation}
Here for empty word $\bfone$ we set $M(\bfone)=\evaM(\bfone) =1$ and $\bfp (\bfone)=\bfq (\bfone)=\bfone$.

Let $\fA_{\sha}$ be the algebra of $\fA$ where the
multiplication is defined by the usual shuffle product $\sha$.
Denote the subalgebra $\fA^0$ by $\fA_{\sha}^0$ when one
considers this shuffle product.
\begin{pro} \label{prop:AMMVshahomo}
The map $\evaML:\fA_{\sha}^0\lra \R$ is an algebra homomorphism.
\end{pro}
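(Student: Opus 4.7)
The plan is to invoke Chen's classical shuffle-product formula for iterated integrals, applied to the straight-line path $[0,1]\to\CC$, $t\mapsto t$. Recall that for sufficiently regular one-forms $\omega_1,\dots,\omega_{p+q}$, partitioning the product simplex $\{1>s_1>\dots>s_p>0\}\times\{1>t_1>\dots>t_q>0\}$ into the $\binom{p+q}{p}$ simplices indexed by $(p,q)$-shuffles, and applying Fubini's theorem, produces
\begin{equation*}
\int_0^1\omega_1\cdots\omega_p\cdot\int_0^1\omega_{p+1}\cdots\omega_{p+q}=\sum_{\tau\in\mathrm{Sh}(p,q)}\int_0^1\omega_{\tau(1)}\cdots\omega_{\tau(p+q)},
\end{equation*}
which translates directly into $\evaML(\bfu)\evaML(\bfv)=\evaML(\bfu\sha\bfv)$ for words $\bfu,\bfv$ of respective lengths $p,q$ over the alphabet $\setX$.

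First I would verify that admissibility is preserved by the shuffle product, so that both sides are actually defined by convergent iterated integrals. If $\bfu,\bfv\in\fA^0$, then every word $\bfw$ appearing in $\bfu\sha\bfv$ has its first letter equal to the first letter of one of $\bfu,\bfv$---hence not $\om_1^{\pm 1}$---and its last letter equal to the last letter of one of $\bfu,\bfv$---hence not $\omz$. Thus $\bfw\in\fA^0$, and the right-hand side of the shuffle identity is a well-defined $\Q$-linear combination of AMMV iterated integrals, which the map $\evaML$ assigns to real numbers via \eqref{equ:MMV2word}.

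Next I would justify the shuffle formula itself in the present improper-integral setting. Among our one-forms $\omz$ and $w_{\pm 1}^{\pm 1}$, only $\omz$ is singular at $t=0$ and only $w_{+1}^{\pm 1}$ are singular at $t=1$ on the closed interval $[0,1]$ (the forms $w_{-1}^{\pm 1}$ have poles at $t=\pm i$ only). Admissibility is precisely the condition guaranteeing absolute convergence of $\int_0^1\bfw$ at both endpoints: the singularity of $\omz$ at $t=0$ is killed by the subsequent non-$\omz$ letter, and a singularity of $w_{+1}^{\pm 1}$ at $t=1$ is avoided because the first letter is not $\om_1^{\pm 1}$. One may therefore apply the smooth-form shuffle identity on the cut-off simplex $\eps<t_i<1-\eps$, and pass to the limit $\eps\to 0^+$ by dominated convergence using the explicit logarithmic control on the integrands near the endpoints. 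Extending by $\Q$-linearity then gives the algebra homomorphism property.

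The main obstacle, though routine by now in the literature, is exactly this last limiting step: one must exhibit an integrable majorant for the cut-off iterated integrand uniformly in $\eps$, which is what admissibility delivers through a standard induction on the weight. Once this is in place, the remaining ingredients---the simplex-partition identity underlying Chen's formula, the combinatorial definition of $\sha$, and the $\Q$-bilinearity of shuffle---are purely formal and the proposition follows at once.
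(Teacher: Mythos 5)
Your proposal is correct and follows essentially the same route as the paper, which simply invokes Chen's shuffle-product theory for iterated integrals via formula \eqref{equ:MMV2word} and leaves the details to the reader. You have merely supplied those omitted details (closure of $\fA^0$ under $\sha$ and the endpoint-convergence justification), and they check out.
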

\begin{proof}
Similarly to the proof of the corresponding result for MZVs, this follows easily from Chen's theory of the shuffle product relations of iterated integrals by formula \eqref{equ:MMV2word}. We leave the details to the interested reader.
\end{proof}

Setting $M(\emptyset)=1$, we easily see that for $d,l\ge 0$, $\bfs=(s_1,\dots,s_r)\in \dk^r$  and $\bft=(t_1,\dots,t_d)\in \dk^d$
\begin{equation*}
M(\bfs)M(\bft)=M(\bfs*\bft),
\end{equation*}
where the commutative stuffle product $\bfs*\bft$ is defined recursively by
\begin{align*}
\emptyset*\bfs=\bfs*\emptyset= & \bfs, \\
(s_1,\dots,s_r)*(t_1,\dots,t_d)= & \big(s_1, (s_2,\dots,s_r)*(t_1,\dots,t_d) \big)
+\big(t_1, (s_1,\dots,s_d)*(t_2,\dots,t_d) \big)\\
&+2\delta(\pari(s_1),\pari(t_1)) \big(s_1\oplus t_1, (s_2,\dots,s_r)*(t_2,\dots,t_d) \big)
\end{align*}
for all $r,d\ge 1$, where the Kronecker symbol $\delta(\eps,\eta)=1$ if $\eps=\eta$ and $0$ otherwise.

We have seen a finite double double shuffle relation in \eqref{equ:finDBSFeg}. As we pointed out, we need to handle
divergent AMMVs appropriately in order to discover more $\Q$-linear relations. In order to do so, we must consider
not only the admissible words but also some non-admissible ones.

\begin{defn}\label{defn:cst}
Denote by $\fA^1$ the subalgebra of $\fA$ generated by words $\tz^\eps_{s;\sigma}$ with $s\in \N$
and $\sigma,\eps=\pm1$. Equivalently, $\fA^1$ is the subalgebra of $\fA$ generated by words not ending with $\om_0$.
We then define another multiplication $\cst$ on $\fA^1$ by
\begin{equation*}
\bfu\cst\bfv=\bfp\big( \bfq(\bfu)*\bfq(\bfv) \big), \qquad \forall \bfu,\bfv\in \fA^1,
\end{equation*}
where $*$ is the ordinary stuffle which is defined by the following:
(i) it distributes over addition, (ii)
$\myone*\bfw=\bfw*\myone=\bfw,\ \forall \bfw\in\fA^1$, and (iii)
$\forall\bfu,\bfv\in\fA^1$, $s,t\in\N$ and $\sigma_1,\sigma_2,\eps_1,\eps_2=\pm 1$
\begin{equation}\label{equ:astDefn}
\tz^{\eps_1}_{s;\sigma_1} \bfu*\tz^{\eps_2}_{t;\sigma_2}\bfv
=\tz^{\eps_1}_{s;\sigma_1}\big(\bfu*\tz^{\eps_2}_{t;\sigma_2}\bfv\big)
+\tz^{\eps_2}_{t;\sigma_2}\big( \tz^{\eps_1}_{s;\sigma_1} \bfu*\bfv\big)
+\delta(\eps_1,\eps_2) \tz^{\eps_1}_{s+t;\sigma_1\sigma_2} (\bfu*\bfv).
\end{equation}
This multiplication $\cst$ is called the \emph{stuffle product} for AMMVs.
\end{defn}

If we denote by $\fA_{\cstt}^1$ the algebra $(\fA^1,\cst)$ then it is not
hard to prove the next proposition.
\begin{pro}\label{prop:AMMVsthomo}
The polynomial algebra $\fA_{\cstt}^1$ is an associative, commutative and weight graded $\Q$-algebra.
\end{pro}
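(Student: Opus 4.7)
The plan is to transfer, via the mutually inverse weight-preserving linear maps $\bfp,\bfq$ on $\fA^1$, the commutativity, associativity, and weight-grading of the ordinary stuffle $*$ (defined on the $\z$-word representation) to the product $\cst$. First I would verify that $\bfp$ and $\bfq$ satisfy $\bfp\circ\bfq=\bfq\circ\bfp=\mathrm{id}$ on $\fA^1$ and that both preserve weight. This is a direct check from the formulas \eqref{equ:MMV2word} and \eqref{equ:word2MMV}: each map is a signed relabeling of letters that leaves the pattern of $\omz$-blocks (and hence the weight) intact, and the two relabeling recipes are inverses of each other, since passing from $(\gs_1,\ldots,\gs_r)$ to $(\gs_1,\gs_1\gs_2,\ldots,\gs_1\cdots\gs_r)$ and similarly for the $\eps$-tuples are invertible operations on $\{\pm1\}^r$. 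Granted this, the definition $\bfu\cst\bfv=\bfp\bigl(\bfq(\bfu)*\bfq(\bfv)\bigr)$ automatically makes $\cst$ inherit the three properties from $*$.

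Next I would establish that $*$ is a commutative, associative, weight-graded product on $\fA^1$. Weight preservation is immediate from the recursive definition: the three summands on the right each have weight equal to the sum of the weights of the two factors, since the merged letter $\z^{\eps_1}_{s+t,\gs_1\gs_2}$ carries weight $s+t$. Commutativity follows by a short induction on $\dep(\bfu)+\dep(\bfv)$; the recursive formula is manifestly symmetric under the swap $\bfu\leftrightarrow\bfv$, using $\delta(\eps_1,\eps_2)=\delta(\eps_2,\eps_1)$ and the symmetry of the letter-level merge $(s,\gs_1)\cdot(t,\gs_2)=(s+t,\gs_1\gs_2)$.

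Associativity is the main obstacle. The cleanest route is to recognize $*$ as a Hoffman-style quasi-shuffle product on the alphabet $\{\z^{\eps}_{s,\gs}:\eps,\gs\in\{\pm1\},\,s\in\N\}$ with commutative associative merge
\[
\z^{\eps_1}_{s,\gs_1}\diamond\z^{\eps_2}_{t,\gs_2}:=\delta(\eps_1,\eps_2)\,\z^{\eps_1}_{s+t,\gs_1\gs_2},
\]
whose associativity reduces to the associativity of $+$ on $\N$, of $\cdot$ on $\{\pm1\}$, and the identity $\delta(\eps_1,\eps_2)\delta(\eps_1,\eps_3)=\delta(\eps_1,\eps_2)\delta(\eps_2,\eps_3)$ (both vanish unless $\eps_1=\eps_2=\eps_3$, in which case both equal $4$). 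Hoffman's general theorem on quasi-shuffle algebras then yields associativity of $*$; equivalently, one can prove it directly by triple induction on $\dep(\bfu)+\dep(\bfv)+\dep(\bfw)$, expanding both $(\bfu*\bfv)*\bfw$ and $\bfu*(\bfv*\bfw)$ via the recursion and matching the nine resulting terms on each side. Once $*$ is known to be commutative, associative, and weight-graded, the same follows for $\cst$ via $\bfp$ and $\bfq$, completing the proof.
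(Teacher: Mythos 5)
Your proposal is correct and follows essentially the same route as the paper, which simply cites Hoffman's Theorem 3.2 on quasi-shuffle (stuffle) algebras for "a similar proof"; you have filled in exactly the verification that citation presupposes, namely that the merge $\z^{\eps_1}_{s,\gs_1}\diamond\z^{\eps_2}_{t,\gs_2}=\delta(\eps_1,\eps_2)\z^{\eps_1}_{s+t,\gs_1\gs_2}$ is commutative, associative and weight-additive, and that the structure transfers to $\cst$ through the mutually inverse weight-preserving maps $\bfp$ and $\bfq$.
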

\begin{proof}
One can refer to \cite[Theorem 3.2]{H1997} for a similar proof by using induction on the weight of words.
The key is to use the recursive definition of $\cst$ in Definition~\ref{defn:cst}. Here is a
very sketchy proof of the associativity. Assume $\bfu,\bfv$ and $\bfw$ are three nonempty words (if any one
is the empty word then the associativity is trivial). Then by definition and \eqref{equ:bfpbfqInv}
\begin{equation*}
   (\bfu\cst\bfv)\cst \bfw=\bfp\Big( (\bfq(\bfu)*\bfq(\bfv))*\bfq(\bfw)\Big),\quad
   \bfu\cst(\bfv\cst \bfw)=\bfp\Big( \bfq(\bfu)*(\bfq(\bfv)*\bfq(\bfw))\Big).
\end{equation*}
Thus it suffices to show that $*$ is associative which follows from the recursive definition of \eqref{equ:astDefn} by induction on the total weight of the words $\bfq(\bfu)$, $\bfq(\bfv)$ and $\bfq(\bfw)$. We leave the details to the
interested reader.
\end{proof}

Now we can define the subalgebra $\fA_{\cstt}^0$ similarly to what we did for $\fA_{\sha}^0$
by replacing the shuffle product by the $M$-stuffle product.
\begin{pro} \label{prop:AMMVstIShomo}
The map $\evaML: \fA_{\cstt}^0 \lra \R$ is an algebra homomorphism.
\end{pro}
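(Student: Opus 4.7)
The plan is to reduce Proposition \ref{prop:AMMVstIShomo} to the series-level stuffle identity
\[
M(\bfs)\,M(\bft) \;=\; M(\bfs * \bft)
\]
for admissible compositions $\bfs,\bft\in\dk^*$, and then transport this identity through the dictionary between words and indices given by \eqref{equ:MMV2word}--\eqref{equ:word2MMV}. Indeed, by the very definition $\bfu \cst \bfv = \bfp\bigl(\bfq(\bfu)*\bfq(\bfv)\bigr)$, once the series identity is known on $\dk^*$, applying $\evaM$ and using $\evaM\circ\bfp = M$ together with $\evaM = \sgn(\bfgs,\bfeps)\,M\circ\bfq$ yields $\evaM(\bfu\cst\bfv)=\evaM(\bfu)\evaM(\bfv)$ on admissible words.

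To establish the series identity I would induct on $\dep(\bfs)+\dep(\bft)$, the base case being trivial since $M(\emptyset)=1$. For the inductive step, expand the product using \eqref{equ:AMMVdefn} and split the double sum $\sum_{m_1>\cdots>m_d>0}\sum_{n_1>\cdots>n_l>0}$ according to the three cases $m_1>n_1$, $n_1>m_1$, and $m_1=n_1$. The first two cases yield the two ``ordering'' terms in the recursive definition of $\bfs*\bft$ after invoking the induction hypothesis. The critical third case produces the stuffing term: the product of parity indicators
\[
\bigl(1+\eps_1(-1)^{m_1}\bigr)\bigl(1+\eps'_1(-1)^{m_1}\bigr)
\]
equals $2\bigl(1+\eps_1(-1)^{m_1}\bigr)$ when $\eps_1=\eps'_1$ and vanishes identically when $\eps_1=-\eps'_1$, which is precisely the meaning of the factor $\delta(\pari(s_1),\pari(t_1))$. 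Meanwhile the sign factors multiply as $\gs_1^{(2m_1+1-\eps_1)/4}(\gs'_1)^{(2m_1+1-\eps_1)/4} = (\gs_1\gs'_1)^{(2m_1+1-\eps_1)/4}$, which matches the sign prescribed for $s_1\oplus t_1\in\dk$, and the exponents add to $s_1+t_1$ on the denominator, producing the correct $(s_1\oplus t_1)$-component.

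The principal difficulty I anticipate is the careful bookkeeping of the sign $\sgn(\bfgs,\bfeps)$ arising in \eqref{equ:word2MMV} when transporting the series identity back to the word level. One must verify that the signs produced by $\bfp\bigl(\bfq(\bfu)*\bfq(\bfv)\bigr)$ on each of the three summands match the product of signs coming from $\bfu$ and $\bfv$ separately. This compatibility is essentially forced by the conjugation definition \eqref{equ:defnstufflelevelN}, but making the cancellations explicit requires tracking how consecutive letters $\om_{\gs_i\gs_{i+1}\cdots}^{\eps_i\eps_{i+1}\cdots}$ transform under splitting and merging of indices in the stuffle recursion. Once this is checked, the proposition follows, and the argument essentially mirrors Hoffman's proof \cite[Theorem 3.2]{H1997} of the stuffle homomorphism for MZVs, with the additional parity decoration handled by $\delta$.
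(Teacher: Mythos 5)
Your proposal is correct and follows essentially the same route as the paper, which simply notes that the identity $\evaM(\bfw_1)\evaM(\bfw_2)=\evaM(\bfw_1\cst\bfw_2)$ is checked by induction on word lengths using the series definition and refers to Hoffman \cite[Theorem 4.2]{H1997} for the details. Your expansion of the three-way case split, the verification that $\delta(\pari(s_1),\pari(t_1))$ encodes the vanishing/doubling of the parity indicators, and the transport through $\bfp$ and $\bfq$ are exactly the omitted details.
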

\begin{proof}
Let $\bfu$ and $\bfv$ be two admissible words. Then by \eqref{equ:bfpbfqInv}
\begin{equation*}
 \evaM\big(\bfu\cst \bfv\big)=M\big(\bfq\circ\bfp\big(\bfq(\bfu)* \bfq(\bfv)\big)\big)
 =  M\big(\bfq(\bfu)* \bfq(\bfv)\big) =M\big(\bfq(\bfu)\big)M\big(\bfq(\bfv)\big)= \evaM(\bfu)\evaM(\bfv)
\end{equation*}
as desired.
\end{proof}

\begin{defn}
Let $w\in\N$ such that $w\ge 2$. For all nontrivial words
$\bf\om_1,\bf\om_2\in \fA^0$ with $|\bf\om_1|+|\bf\om_2|=w$, we say that
the equation
\begin{equation*}
\evaM(\bf\om_1\sha \bf\om_2-\bf\om_1\cst\bf\om_2)=0
\end{equation*}
provides a \emph{finite double shuffle relation} (finite DBSF) of AMMVs of weight $w$.
\end{defn}

See \eqref{equ:finDBSFeg} for a concrete example of this in weight three.

\section{Regularization for divergent AMMVs and double shuffle relations}\label{RAMMVs}
Since there are no weight 1 MZVs (as $\zeta(1)$ diverges), it is impossible to prove the identity
$\zeta(2,1)=\zeta(3)$ via a finite DBSF. However, one can remedy this by considering the
\emph{regularized double shuffle relation} (regularized DBSF) produced by the following mechanism.

First, combining Prop.~\ref{prop:AMMVshahomo} and Prop.~\ref{prop:AMMVstIShomo} and using
the algebra structures of $\fA_{\cstt}^1$ (resp. $\fA_{\sha}^1$) over $\fA_{\cstt}^0$ (resp. $\fA_{\sha}^0$)
we can prove the following algebraic result without too much difficulty.

\begin{pro} \label{prop:extendM*}
We have an algebra homomorphism:
\begin{equation*}
\evaML_{\cstt}: (\fA_{\cstt}^1,\cst)\lra \R[T]
\end{equation*}
which is uniquely determined by the properties that it extends the evaluation map $M:\fA_{\cstt}^0\lra \R$ by sending
$\emz_{1}$ to $T$ and $\emz_{\wc1}$ to $T+2\log 2$. Further, we have an algebra homomorphism:
\begin{equation*}
\evaML_\sha: (\fA_{\sha}^1,\sha)\lra \R[T]
\end{equation*}
which is uniquely determined by the properties that it
extends the evaluation map $\evaML:\fA_\sha^0\lra \R$ by sending
$\emz_{1}$ to $T-\log 2$ and $\emz_{\wc1}$ to $T+\log 2$.
\end{pro}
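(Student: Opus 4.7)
The plan is to adapt the classical Ihara--Kaneko--Zagier-style regularization framework for multiple zeta values (see e.g.\ \cite{H1997}) to the present AMMV setting, where two divergent letters $\z_1=\omega_1^{+1}$ and $\z_{\check 1}=\omega_1^{-1}$ arise in place of the single divergent letter of the classical case. The strategy is in two parts: first exhibit the appropriate algebraic structure on $\fA_\sha^1$ and $\fA_\cstt^1$ over $\fA_\sha^0$ and $\fA_\cstt^0$, then invoke the universal property of polynomial algebras to produce the homomorphisms.

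Concretely, I would establish that $\fA_\sha^1$ (resp.\ $\fA_\cstt^1$) is a polynomial algebra in the two commuting variables $\z_1,\z_{\check 1}$ over $\fA_\sha^0$ (resp.\ $\fA_\cstt^0$). This generalizes Radford's theorem and Hoffman's corresponding stuffle construction from one divergent generator to two. The key claim is that every $\bfw\in\fA^1$ admits a unique representation $\bfw=\sum_{i,j\ge 0}a_{ij}*(\z_1)^{*i}*(\z_{\check 1})^{*j}$ with $a_{ij}\in\fA^0$, for each of $*\in\{\sha,\cst\}$. The proof proceeds by induction on the length of the maximal initial run of $\omega_1^{\pm 1}$-letters in $\bfw$, via a triangular argument: with respect to a suitable filtration by this prefix length, each monomial $(\z_1)^{*i}*(\z_{\check 1})^{*j}*\bfw_0$ has leading term matching the concatenation prefix $\omega_1^{\eps_1}\cdots\omega_1^{\eps_{i+j}}$ followed by $\bfw_0\in\fA^0$, up to terms of strictly shorter prefix length.

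Granting the structural result, the universal property produces the unique algebra homomorphisms $\evaM_\sha$ and $\evaM_\cstt$ once the images of $\z_1$ and $\z_{\check 1}$ in $\R[T]$ are freely prescribed. The specific logarithmic shifts in the prescription are forced by matching the natural regularized limits. For the shuffle regularization, direct computation gives
\[
\int_0^{1-\eps}\omega_1^{+1}=-\log\eps-\log 2+o(1),\qquad
\int_0^{1-\eps}\omega_1^{-1}=-\log\eps+\log 2+o(1),
\]
so that setting $T=-\log\eps$ recovers $\evaM_\sha(\z_1)=T-\log 2$ and $\evaM_\sha(\z_{\check 1})=T+\log 2$. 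For the stuffle regularization, the truncated partial sums satisfy $M_N(1)\sim\log(N/2)+\gamma$ and $M_N(\check 1)\sim\log(2N)+\gamma$ as $N\to\infty$; choosing the normalization $T:=\log(N/2)+\gamma$ then yields $\evaM_\cstt(\z_1)=T$ and $\evaM_\cstt(\z_{\check 1})=T+2\log 2$, matching the claim.

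The main obstacle is the structural step. In contrast to the MZV case, where a single variable suffices, two noncommuting divergent letters must be handled simultaneously here, and the triangular argument must be carried out over all prefix words $\omega_1^{\eps_1}\cdots\omega_1^{\eps_k}\in\{\omega_1^{+1},\omega_1^{-1}\}^k$. The key technical point is the choice of a leading-term order (for instance lexicographic with $\omega_1^{+1}<\omega_1^{-1}$) under which each monomial $(\z_1)^{*i}*(\z_{\check 1})^{*j}*\bfw_0$ has a well-defined leading word, yielding both injectivity and surjectivity of the polynomial map $\fA_*^0[X,Y]\to\fA_*^1$ sending $X\mapsto\z_1$ and $Y\mapsto\z_{\check 1}$.
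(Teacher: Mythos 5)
Your overall strategy --- a Radford/Hoffman-type structure theorem for $\fA^1_{*}$ over $\fA^0_{*}$ followed by the universal property of polynomial algebras, with the four constants pinned down by $\eps$- and $N$-cutoff asymptotics --- is the standard Ihara--Kaneko--Zagier route, and your computations of the images $T$, $T+2\log 2$, $T-\log 2$, $T+\log 2$ are all correct (the paper itself gives no argument here, deferring entirely to the MMV case in \cite[Props.~2.7 and 2.8]{XuZhao2020a}). However, the structural claim your proof rests on --- that every $\bfw\in\fA^1$ is uniquely of the form $\sum_{i,j}a_{ij}*\emz_1^{*i}*\emz_{\check 1}^{*j}$ with $a_{ij}\in\fA^0$ --- is false: the map $\fA^0_{*}[X,Y]\to\fA^1_{*}$ is not even surjective. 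At weight $2$ one has $\dim\fA^1_2=5\cdot 4=20$, while $\dim\fA^0_2+2\dim\fA^0_1+\dim\mathrm{Sym}^2\langle\emz_1,\emz_{\check1}\rangle=12+4+3=19$. Concretely, $\emz_1\sha\emz_{\check 1}=\om_1^{+1}\om_1^{-1}+\om_1^{-1}\om_1^{+1}$ produces only the symmetric combination of the two words $\om_1^{+1}\om_1^{-1}$ and $\om_1^{-1}\om_1^{+1}$, so the antisymmetric combination $\om_1^{+1}\om_1^{-1}-\om_1^{-1}\om_1^{+1}$ lies outside the subalgebra generated by $\fA^0$ and $\emz_1,\emz_{\check 1}$; the same failure occurs for $\cst$ because $\delta(+1,-1)=0$ kills the contraction term. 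This is precisely the obstruction you flag as the ``main obstacle'': there are $2^k$ prefix words of length $k$ in the two noncommuting divergent letters but only $k+1$ commuting monomials of degree $k$, so no choice of leading-term order can make your triangular argument close.

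The repair is to reduce to a \emph{single} divergent letter. Since $\om_1^{-1}=w_{+1}^{-1}=\tfrac{dt}{1-t}+\tfrac{dt}{1+t}$ and $\om_1^{+1}=w_{+1}^{+1}=\tfrac{dt}{1-t}-\tfrac{dt}{1+t}$, the two letters span the same space as $\tfrac{dt}{1-t}$ and $\tfrac{dt}{1+t}$, and only the first is singular at $t=1$. After this linear change of alphabet the classical one-variable argument applies verbatim: $\fA^1_{\sha}$ is a polynomial ring in the single generator $\tfrac12(\emz_1+\emz_{\check 1})$ over the subalgebra of \emph{convergent} words, namely those whose leading letter lies in the span of $\omz$, $\om_{-1}^{\pm1}$ and $\emz_{\check 1}-\emz_1$; an analogous reduction works for $\cst$ on the series side. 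This convergent subalgebra strictly contains $\fA^0$: for instance the concatenation $\tfrac14(\emz_{\check 1}-\emz_1)(\emz_{\check 1}+\emz_1)$, whose value is the convergent integral $\int_0^1\frac{-\log(1-t)}{1+t}\,dt$, is congruent to $\tfrac14(\om_1^{-1}\om_1^{+1}-\om_1^{+1}\om_1^{-1})$ modulo the generated subalgebra and hence is \emph{not} determined by the data listed in the Proposition. So both the existence and the uniqueness statements must be formulated relative to this larger convergent subalgebra, on which $\evaML$ is defined by the convergent iterated integrals (resp.\ series). Your analytic existence argument (the cutoff regularizations are $\sha$- resp.\ $\cst$-homomorphisms into $\R[T]$, with the stated images of $\emz_1$ and $\emz_{\check1}$) survives unchanged; it is only the algebraic uniqueness/structure step that needs this modification.
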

\begin{proof}
The proof is the same as that for the corresponding results of MMVs.
See \cite[Prop.~2.7 and Prop.~ 2.8]{XuZhao2020a}. The key observations for
the stuffle regularization are
\begin{align*}
  M_N(1)=\,\sum_{k=1}^N \frac{2}{2k}\,=\sum_{k=1}^N \frac{1}{k}= \log N+\gamma+&O(N^{-1} \log N),\\
  M_N(\wc{1})=\sum_{k=1}^N \frac{2}{2k-1}=\sum_{l=1}^{2N} \frac{2}{l}-S_N(1)
  =&2(\log 2N+\gamma)-\log N-\gamma+O(N^{-1} \log N)\\
  =&\log N+\gamma+2\log 2+O(N^{-1} \log N).
\end{align*}
By the usual regularization process we see that $\evaM_{\cstt}(\tz_{1;1}^1)=T$ and $\evaM_{\cstt}(\wc{\tz}_{1;1}^{-1})=T+2\log 2$.

For the shuffle regularization, we take an arbitrarily small $\gl>0$ and find that
\begin{align*}
\int_0^{1-\gl}\om^{+1}_{+1}=\int_0^{1-\gl}\frac{2tdt}{1-t^2}=& \int_0^{1-\gl} \frac{dt}{1-t}- \int_0^{1-\gl}\frac{dt}{1+t}
=-\log \gl-\log 2+O(\gl),\\
\int_0^{1-\gl}\om^{-1}_{+1}=\int_0^{1-\gl}\frac{2dt}{1-t^2}=& \int_0^{1-\gl} \frac{dt}{1-t}+\int_0^{1-\gl}\frac{dt}{1+t}
=-\log \gl+\log 2+O(\gl).
\end{align*}
By the usual regularization process we see that $\evaM_\sha(\om^{+1}_{+1})=T-\log 2$ and $\evaM_\sha(\om^{-1}_{+1})=T+\log 2$.
The rest of the proof is essentially the same as that for the corresponding result of MZVs and is thus left
to the interested reader.
\end{proof}

\begin{re}
The following question was posed by the referee: is it possible to choose the images of $M(1)$ and $M(\wc{1})$
independently? This may be possible and may play some important roles when we consider generating functions
of AMMVs in general. However, the additional complication is unnecessary for our purpose in this paper.
\end{re}

The following structural result is useful in proving the comparison theorem between the two different ways
of regularization of AMMVs. Let $\AMMV_w$ be the $\Q$-vector space generated by all the AMMVs of weight $w$
and denote all its subspaces similarly.

\begin{thm} \label{thm:AMMV=CMZV4}
For any $w\in\N_0$, we have
\begin{align*}
\dim_{\Q[i]} \AMMV_w\otimes_\Q \Q[i]=\dim_{\Q[i]}  \CMZV^4_w\otimes_\Q \Q[i].
\end{align*}
\end{thm}
\begin{proof}
For any $N\in\N$, $\bfk=(k_1,\ldots,k_r)\in\N^r$, $\bfeps=(\eps_1, \dots, \eps_r)\in\{\pm 1\}^r$, and $\bfgs=(\sigma_1, \dots, \sigma_r)\in\{\pm 1\}^r$, the partial sum of AMMV truncated at $2N$ is
\begin{align}\label{equ:AMMZ2CMZV4}
M_\bfgs^\bfeps(\bfk)_N:={}&\sum_{2N\ge m_1>\cdots>m_r>0} \frac{(1+\eps_1(-1)^{m_1})\sigma_1^{(2m_1+1-\eps_1)/4} \cdots (1+\eps_r(-1)^{m_r})\sigma_r^{(2m_r+1-\eps_r)/4}}{m_1^{k_1} \cdots m_r^{k_r}} \notag\\
={}&\sum_{2N\ge m_1>\cdots>m_r>0} \frac{(1+\eps_1(-1)^{m_1})\mu_1^{m_1+(1-\eps_1)/2} \cdots (1+\eps_r(-1)^{m_r})\mu_r^{m_r+(1-\eps_r)/2}}{m_1^{k_1} \cdots m_r^{k_r}} \notag\\
={}&\bigg(\prod_{j=1}^r \mu_j^{(1-\eps_j)/2} \bigg)\sum_{2N\ge m_1>\cdots>m_r>0} \frac{(1+\eps_1(-1)^{m_1})\mu_1^{m_1} \cdots (1+\eps_r(-1)^{m_r})\mu_r^{m_r}}{m_1^{k_1} \cdots m_r^{k_r}}
\end{align}
where $\mu_j=i$ if $\sigma_j=-1$ and $\mu_j=1$ if $\sigma_j=1$. Taking $N\to\infty$ we see that $\AMMV_w\subseteq \CMZV^4_w\otimes_\Q \Q[i]$.

On the other hand, for any $\xi_j\in\{e^{\pi i/2 },e^{\pi i},e^{3\pi i/2},e^{2\pi i}\}=\{\pm1,\pm i\}$, we have
\begin{align*}
\xi_j^{n_j}:=
\left\{
  \begin{array}{ll}
\{\pm1,\pm i (-1)^{{(n_j-1)}/{2}}\}\ &\quad \hbox{if $n_j$\ \text{odd};} \\
\{1,\pm (-1)^{{n_j}/{2}}\}\ &\quad \hbox{if $n_j$\ \text{even},}
  \end{array}
\right.
\end{align*}
and therefore according to the definition of colored MZVs of level four,
\begin{align} 
\Li_{k_1,k_2,\ldots, k_r}(\xi_1,\dots,\xi_r)={}&\sum\limits_{n_1>\cdots>n_r>0}
\frac{\xi_1^{n_1}\dots \xi_r^{n_r}}{n_1^{k_1} \dots n_r^{k_r}}     \notag\\
={}&\sum_{\gd_1=0}^1 \cdots \sum_{\gd_r=0}^1
\sum_{\substack{n_1>\cdots>n_r>0 \\ n_j\equiv \gd_j \pmod{2} \ \forall j }}\frac{\xi_1^{n_1}
 \dots \xi_r^{n_r}}{n_1^{k_1} \dots n_r^{k_r}} \in \AMMV \otimes_\Q \Q[i]. \label{equ:CMZV2AMMV}
\end{align}
Hence, $\CMZV^4_w \subseteq \AMMV_w\otimes_\Q \Q[i]$. This concludes the proof of the theorem.
\end{proof}

We can now apply the same mechanism as in \cite{IKZ2006} to derive the following regularized DBSFs.

\begin{thm}\label{thm:RDSoverR}
Define an $\R$-linear map $\rho:\R[T]\to \R[T]$ by
$$\rho(e^{Tu})=\exp\left(\sum_{n=2}^\infty
\frac{(-1)^n}{n}\zeta(n)u^n\right)e^{(T-\log 2)u},\qquad |u|<1.$$
Then for any $\bfw\in\fA^1$ one has
\begin{equation*}
\evaML_\sha(\bfw;T)= \rho\circ\evaML_{\cstt}(\bfw;T).
\end{equation*}
\end{thm}
\begin{proof}
It is possible to modify the proof contained in \cite{IKZ2006} and adapt to our situation.
However, the process is tedious and requires a lot computation and estimates. See \cite{LiZhao2024}
for a detailed presentation for the so-called $\mu$-multiple Hurwitz zeta values. By taking $\mu=2$
and $m$'s to be arbitrary positive integers in \cite{LiZhao2024} we recover the proof of the comparison theorem
for the regularization of the non-alternating MMVs.

Instead of reinventing the wheel for AMMVs, we can use Theorem~\ref{thm:AMMV=CMZV4} and the regularization
of CMZVs to prove the current theorem. Indeed, using the same notation as in Theorem~\ref{thm:AMMV=CMZV4},
by \eqref{equ:AMMZ2CMZV4}, we see that
\begin{align}\label{equ:AMMZ2CMZV4a}
M_\bfgs^\bfeps(\bfk)_N={}&\bigg(\prod_{j=1}^r \mu_j^{(1-\eps_j)/2} \bigg)
\sum_{\gd_1=\pm 1}\cdots \sum_{\gd_r=\pm1}
\Big( \tilde{\eps}_1(\gd_1)\cdots \tilde{\eps}_r(\gd_r)\Big) \Li^{(2N)}_\bfk(\gd_1\mu_1,\dots,\gd_r\mu_r)
\end{align}
where $\tilde{\eps}_j(1)=1$ and $\tilde{\eps}_j(-1)=\eps_j$, $\mu_j=i$ or 1 according to $\mu_j^2=\sigma_j$, and $\Li^{(2N)}$ denotes the partial sum of
the CMZV truncated at $2N$. By the regularization scheme for CMVZs (see \cite[Ch.~13]{Zhao2016}), as $N\to \infty$,
\begin{equation*}
 \Li^{(2N)}_{\bfk}(\gd_1\mu_1,\dots,\gd_r\mu_r)=
\Li_\bfk^\ast(\gd_1\mu_1,\dots,\gd_r\mu_r;T+\log 2)\big|_{T=\log N+\gamma}+O(N^{-1}\log^r N),
\end{equation*}
where $\Li_\bfk^\ast(\gd_1\mu_1,\dots,\gd_r\mu_r;T)\in \CC[T]$. Therefore, setting $\bfw=\om_0^{k_1-1}\om_{\sigma_1}^{\eps_1}\cdots \om_0^{k_r-1}\om_{\sigma_r}^{\eps_r}$, we get
\begin{equation}\label{equ:stuffleM}
\evaM_{\cstt}(\bfw;T)=
\bigg(\prod_{j=1}^r \mu_j^{(1-\eps_j)/2} \bigg)
\sum_{\gd_1=\pm 1}\cdots \sum_{\gd_r=\pm1}
\Big( \tilde{\eps}_1(\gd_1)\cdots \tilde{\eps}_r(\gd_r)\Big)
\Li_\bfk^\ast(\gd_1\mu_1,\dots,\gd_r\mu_r;T+\log2).
\end{equation}
Moreover, if we define the $\CC$-linear map $\tau:\CC[T]\to\CC[T]$
such that $\tau(T)=T-\log2$ and define the  $\CC$-linear map $\tilde{\rho}:\CC[T]\to\CC[T]$ such that
$$
\tilde{\rho}(e^{Tu})=\exp\left(\sum_{n=2}^\infty\frac{(-1)^n}{n}\zeta(n)u^n\right)e^{Tu},\qquad |u|<1,
$$
then for $\rho=\tilde{\rho}\circ\tau$ we get
\begin{equation}\label{equ:rhoMap}
\rho(\Li_\bfk^\ast(\gd_1\mu_1,\dots,\gd_r\mu_r;T))=\Li_\bfk^\sha(\gd_1'\mu_1',\dots,\gd_r'\mu_r';T)
\end{equation}
where $\gd_j'=\prod_{n=1}^j \gd_n$, $\mu_j'=\prod_{n=1}^j \mu_n$ and $\Li_\bfk^\sha(\bfxi;T)\in \CC[T]$ such that
\begin{equation*}
\int_0^{1-\gl} \om_0^{k_1-1}\frac{dt}{\xi_1^{-1}-t} \cdots \om_0^{k_1-1}\frac{dt}{\xi_r^{-1}-t}
=\Li_\bfk^\sha(\bfxi;-\log \gl) +O(\gl \log^r \gl)
\end{equation*}
as $\gl\to 0^+$, for all $\bfk\in\N^r$ and $\bfxi\in\{\pm1,\pm i\}^r$.
It is clear that $\rho=\tilde{\rho}\circ\tau$ satisfies the conditions given in the statement of the theorem.

On the other hand, setting $\gk_j=\prod_{n=1}^j \sigma_n$ for $j=1,\dots,r$,
we obtain from \eqref{AMMV-iterated-integrals} the shuffle regularized value of
$M_\bfgs^{\bfeps}(\bfk)$
\begin{align}\label{equ:shaRegIntegral}
  \int_0^{1-\gl}  \om_0^{k_1-1}W_{\gk_1}^{\eps_1,\eps_2}
    \cdots \om_0^{k_{r-1}-1}W_{\gk_{r-1}}^{\eps_{r-1},\eps_r}  \om_0^{k_r-1}\om_{\gk_r}^{\eps_r}.
\end{align}
To save space, for any 4-th root of unity $\mu$ we put
\begin{equation*}
  \tx(\mu)=\frac{dt}{\mu^{-1}-t}.
\end{equation*}
Then by definition \eqref{equ:1-forms}
\begin{equation*}
\aligned
 \om_{+1}^{-1}:={}& \frac{2dt}{1-t^2}=\tx(1)-\tx(-1), {}&
\quad \om_{-1}^{-1}:={}&\frac{-2dt}{1+t^2}=i\big(\tx(i)-\tx(-i)\big), \\
 \om_{+1}^{+1}:={}&\frac{2tdt}{1-t^2}=\tx(1)+\tx(-1), {}&
\quad \om_{-1}^{+1}:={}&\frac{-2tdt}{1+t^2}=\tx(i)+\tx(-i).
\endaligned
\end{equation*}
Let $\eta_j=\eps_j\eps_{j+1}$ for all $j=1,\dots, r$, where $\eps_{r+1}=1$.
We see that for all $j=1,\dots,r-1$ we always have
\begin{equation*}
  W_{\gk_j}^{\eps_j,\eps_{j+1}}=
(\mu_j')^{(\eps_{j+1}-\eps_j)/2}  \big(\tx(\mu_j')a+\eta_j\tx(-\mu_j')\big)
\end{equation*}
where $\mu_j'=\prod_{n=1}^j \mu_n$ for all $j=1,\dots,r$.
Further, the last 1-form in \eqref{equ:shaRegIntegral} can be written as
\begin{equation*}
\om_{\gk_r}^{\eps_r} =
(\mu'_r)^{(1-\eps_r)/2} \big(\tx(\mu_r')a+\eta_r\tx(-\mu_r')\big).
\end{equation*}
Hence the shuffle regularized value of $M_\bfgs^{\bfeps}(\bfk)$ is (setting $\eta_{r+1}=1$)
\begin{align*}
& \int_0^{1-\gl}  \om_0^{k_1-1}W_{\gk_1}^{\eps_1,\eps_2}
    \cdots \om_0^{k_{r-1}-1}W_{\gk_{r-1}}^{\eps_{r-1},\eps_r}  \om_0^{k_r-1}\om_{\gk_r}^{\eps_r} \\
={}&  \bigg(\prod_{j=1}^r (\mu'_j)^{(1-\eta_j)/2-(1-\eta_{j+1})/2}\bigg)
\int_0^{1-\gl}  \om_0^{k_1-1}  \big(\tx(\mu_1')a+\eta_1\tx(-\mu_1')\big)
\cdots  \om_0^{k_r-1}  \big(\tx(\mu_r')a+\eta_r\tx(-\mu_r')\big) \\
={}& \bigg(\prod_{j=1}^r (\mu'_j/\mu'_{j-1})^{(1-\eta_j)/2}\bigg) \sum_{\gd_1'=\pm 1}\cdots \sum_{\gd_r'=\pm1}
 \Big(\tilde{\eta}_1(\gd_1')\cdots \tilde{\eta}_r(\gd_r') \Big)\Li_\bfk^\sha(\gd_1'\mu'_1,\dots,\gd_r'\mu'_r ;-\log \gl) +O(\gl \log^r \gl)
\end{align*}
where $\mu'_0=1$, $\tilde{\eta}_j(1)=1$ and $\tilde{\eta}_j(-1)=\eta_j$ for all $j=1,\dots,r$.
Moreover, it is easy to check that for all $j=1,\dots,r$
\begin{equation}\label{equ:epsgdRel}
 \tilde{\eps}_j(\gd_1\gd_2)=\tilde{\eps}_j(\gd_1)\tilde{\eps}_j(\gd_2),
\quad\text{and}\quad \widetilde{\eps_j\eps_{j+1}}(\gd)=\tilde{\eps}_j(\gd)\tilde{\eps}_{j+1}(\gd) \quad\forall \gd,\gd_1,\gd_2=\pm1.
\end{equation}
Thus by the change of indices $\gd_j'=\prod_{n=1}^j \gd_n$ for all $j=1,\dots,r$ we have
\begin{align*}
\tilde{\eta}_1(\gd_1')\cdots \tilde{\eta}_r(\gd_r')={}&
\widetilde{\eps_1\eps_2}(\gd_1)\widetilde{\eps_2\eps_3}(\gd_1\gd_2)\cdots \widetilde{\eps_{r-1}\eps_r}(\gd_1\gd_2\cdots \gd_{r-1} )\tilde{\eps}_r(\gd_1\gd_2\cdots \gd_r)
=\tilde{\eps}_1(\gd_1) \cdots \tilde{\eps}_r(\gd_r).
\end{align*}
Hence
\begin{align*}
& \int_0^{1-\gl}  \om_0^{k_1-1}W_{\gk_1}^{\eps_1,\eps_2}
    \cdots \om_0^{k_{r-1}-1}W_{\gk_{r-1}}^{\eps_{r-1},\eps_r}  \om_0^{k_r-1}\om_{\gk_r}^{\eps_r} \\
={}& \bigg(\prod_{j=1}^r \mu_j^{(1-\eta_j)/2}\bigg) \sum_{\gd_1=\pm 1}\cdots \sum_{\gd_r=\pm1}
\Big(\tilde{\eps}_1(\gd_1) \cdots \tilde{\eps}_r(\gd_r)\Big)
\Li_\bfk^\sha(\gd_1'\mu'_1,\dots,\gd_r'\mu'_r ;-\log \gl) +O(\gl \log^r \gl)
\end{align*}
where we recall that $\gd_j'=\prod_{n=1}^j \gd_n$ and $\mu'_j=\prod_{n=1}^j \mu_n$. This implies that
\begin{equation}\label{equ:shuffleM}
\evaM_{\sha}(\bfw;T)=
\bigg(\prod_{j=1}^r \mu_j^{(1-\eta_j)/2}\bigg) \sum_{\gd_1=\pm 1}\cdots \sum_{\gd_r=\pm1}
\Big(\tilde{\eps}_1(\gd_1) \cdots \tilde{\eps}_r(\gd_r)\Big)\Li_\bfk^\sha(\gd_1'\mu'_1,\dots,\gd_r'\mu'_r ;T).
\end{equation}
The theorem now follows from \eqref{equ:stuffleM}, \eqref{equ:rhoMap} and \eqref{equ:shuffleM}.
As a final note, since all AMMVs are real numbers, we see that the theorem holds over $\R$.
\end{proof}

\begin{re} We know there should be $\Q$-linear relations among Euler sums or AMZVs that are not consequences
of the regularized DBSF, see \cite[Remark 3.5]{Zhao2010a}. Further, there are some octahedral relations
discovered in \cite{Zhao2008d} that are not consequences of regularized DBSF. Therefore, the same should hold for AMMVs by Theorem~\ref{thm:AMMV=CMZV4}.
\end{re}

Next, let's consider a weight three example using a regularized AMMV as defined by \cite[Definition 3.2]{YuanZh2014b}.

\begin{exa}  As a simple example of the above theorem, we first have
\begin{align*}
\evaM_\sha(\wc{1},\bar2)={}&\, \evaM_\sha\big(\bfp(\om_1^{-1}\om_0\om^1_{-1})\big)=\evaM_\sha(\om_1^{-1}\om_0\om^1_{-1}) \\
={}&\,  \evaM_\sha(\om_1^{-1}\sha \om_0\om^1_{-1}-\om_0\om_1^{-1}\om^1_{-1}-\om_0\om^1_{-1}\om_1^{-1})\\
={}&\,  \evaM_\sha(\om_1^{-1})M(\wc{2})-M(\bfq(\om_0\om_1^{-1}\om^1_{-1}))-M(\bfq(\om_0\om^1_{-1}\om_1^{-1})))\\
={}&\, (T+\log 2)M(\bar{2})-M(\wc{2},\bar1)-M(\wc{\bar2},\wc{\bar1}).
\end{align*}
On the other hand,
\begin{align*}
\evaM_{\cstt}( \wc{1},\bar2)
={}&\,\evaM_{\cstt}\big( \bfp(\tz_{1,1}^{-1}\tz_{2,-1}^1) \big)
=\evaM_{\cstt}\big( \bfp(\tz_{1,1}^{-1} \ast\tz_{2,-1}^1 -\tz_{2,-1}^1 \tz_{1,1}^{-1}) \big)\\
={}&\, \evaM_{\cstt}\big(\tz_{1,1}^{-1} \cst\tz_{2,-1}^1 - \bfp(\tz_{2,-1}^1 \tz_{1,1}^{-1})\big)
=(T+2\log 2)M(\bar{2})-M(\bar2,\wc{1}),
\end{align*}
since no stuffing can appear. By Theorem~\ref{thm:RDSoverR}, we get
\begin{align*}
(T+\log 2)M(\bar{2})-M(\wc{2},\bar1)-M(\wc{\bar2},\wc{\bar1})
&\,=\rho\Big((T+2\log 2)M(\bar{2})-M(\bar2,\wc{1})\Big) \\
&\,=(T+\log 2)M(\bar{2})-M(\bar2,\wc{1})\\
&\, \Lra  M(\wc{2},\bar1)+M(\wc{\bar2},\wc{\bar1})=M(\bar2,\wc{1}).
\end{align*}
We can check this identity numerically and find that both sides are   $\approx -0.7739912$.

For more complicated examples, it is in fact easier to use \eqref{equ:stuffleM} and \eqref{equ:shuffleM}
to express the regularized AMMVs in terms of regularized CMZVs.
\end{exa}

\begin{thm} \label{thm:dualMMVo} \emph{(Generalized Duality Relation)}
Let $\bfk=(k_1,\ldots,k_r),\bfl=(l_1,\ldots,l_r)\in\N^r$ and $\bfgs,\bfeps\in\{\pm 1\}^r$ with $\eps_1=-1$.
Then
\begin{equation}\label{equ:dualMMVo}
M \big( \om_0^{k_r} (\om_{\sigma_r}^{\eps_r})^{l_r} \cdots(\om_{\sigma_2}^{\eps_2})^{l_2}\om_0^{k_1}(\om_{\sigma_1}^{-1})^{l_1}  \big)
=M \big( (u_{\sigma_1}^{-1})^{l_1} u_0^{k_1} (u_{\sigma_2}^{\eps_2})^{l_2} \cdots (u_{\sigma_r}^{\eps_r})^{l_r} u_0^{k_r} \big),
\end{equation}
where $u_0=\om_{+1}^{-1}$, $u_{+1}^{-1}=\om_0$, $u_{-1}^{-1}=\om_{-1}^{-1}$, $u_{+1}^{+1}=\om_0+\om_{+1}^{+1}-\om_{+1}^{-1}$
and $u_{-1}^{+1}=\om_{+1}^{+1}-\om_{+1}^{-1}-\om_{-1}^{+1}$.
\end{thm}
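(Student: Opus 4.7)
The plan is to derive this duality by pulling the iterated integral on the left-hand side back through the involution $\tau\colon[0,1]\to[0,1]$, $\tau(s)=(1-s)/(1+s)$, which swaps the endpoints of $[0,1]$ and so reverses orientation. This mirrors the substitution $t\mapsto 1-t$ used in the classical MZV duality, now adapted to the alphabet $\{\omz,\om_{\pm1}^{\pm1}\}$ appearing in AMMVs.

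The first step is to recall the path-reversal identity for Chen's iterated integrals: for any word $\omega_1\omega_2\cdots\omega_N$ of length $N$ with letters in $\setX$,
\[
\int_0^1\omega_1\omega_2\cdots\omega_N \;=\; (-1)^N\int_0^1\tau^*\omega_N\,\tau^*\omega_{N-1}\cdots\tau^*\omega_1,
\]
the sign $(-1)^N$ coming from reversing the direction of the path once per letter. Next I would compute each pullback $\tau^*\omega$ explicitly, using $\tau'(s)=-2/(1+s)^2$, $1-\tau(s)^2=4s/(1+s)^2$, $1+\tau(s)^2=2(1+s^2)/(1+s)^2$, and the partial-fraction identities
\[
\frac{1-s}{s(1+s)}=\frac1s-\frac{2}{1+s},\qquad \frac{1-s}{(1+s)(1+s^2)}=\frac1{1+s}-\frac{s}{1+s^2}
\]
required for the two $+1$-superscript letters. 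A direct check in each case yields the uniform identity $\tau^*\omega=-u_\omega$, where $u_\omega$ is exactly the one-form prescribed in the statement; for instance, $\tau^*\bigl(2t\,dt/(1-t^2)\bigr)=-(w_0+w_{+1}^{+1}-w_{+1}^{-1})\,ds$ confirms $\tau^*(w_{+1}^{+1})=-u_{+1}^{+1}$, and similarly for the other four letters.

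Plugging $\tau^*\omega_i=-u_{\omega_i}$ into the reversal identity, the two factors of $(-1)^N$ cancel and we obtain
\[
\int_0^1\omega_1\cdots\omega_N \;=\; \int_0^1 u_{\omega_N}u_{\omega_{N-1}}\cdots u_{\omega_1}.
\]
Applied to the left-hand word $\omz^{k_r}(\om_{\gs_r}^{\eps_r})^{l_r}\cdots\omz^{k_1}(\om_{\gs_1}^{-1})^{l_1}$, this reverses the order of the letters and replaces each by its $u$-counterpart, producing exactly the right-hand side. The hypothesis $\eps_1=-1$, together with $k_r,l_1\ge 1$, guarantees admissibility on both sides so that the $\Q$-linearly extended iterated integrals converge. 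The main obstacle is the bookkeeping for the two $+1$-superscript letters $w_{\pm1}^{+1}$: their naive pullbacks are rational one-forms that must be decomposed via partial fractions and then reassembled as $\Q$-linear combinations of the original $w$-letters in order to match the exact $u$-formulas in the statement; once this algebraic rewriting is carried out, the remainder of the argument is formal.
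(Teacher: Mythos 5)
Your proposal is correct and follows essentially the same route as the paper: the authors also apply the involution $t\mapsto\frac{1-t}{1+t}$, compute the pullbacks $\tau^*w_0=-w_{+1}^{-1}$, $\tau^*w_{+1}^{-1}=-w_0$, $\tau^*w_{-1}^{-1}=-w_{-1}^{-1}$, $\tau^*w_{+1}^{+1}=-(w_0+w_{+1}^{+1}-w_{+1}^{-1})$, $\tau^*w_{-1}^{+1}=-(w_{+1}^{+1}-w_{+1}^{-1}-w_{-1}^{+1})$ via the same partial-fraction decompositions, and conclude by reversing the word. Your write-up is in fact slightly more careful than the paper's, since it makes explicit the $(-1)^N$ from path reversal cancelling against the $N$ minus signs from $\tau^*\omega=-u_\omega$, a point the paper leaves to the reader with ``the theorem follows immediately.''
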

\begin{proof}
Under the substitution $t\to \frac{1-t}{1+t}$, we get
\begin{align*}
dt\to \frac{-2dt}{(1+t)^2},
\quad 1-t^2 \to  \frac{4t}{(1+t)^2},
\quad  1+t^2\to  \frac{2(1+t^2)}{(1+t)^2} .
\end{align*}
Thus
\begin{align*}
\om_0=\frac{dt}{t}\to -\frac{2dt}{1-t^2}=-\om_{+1}^{-1},
\quad \om_{+1}^{-1}=\frac{2dt}{1-t^2}\to -\om_0,
\quad \om_{-1}^{-1}=\frac{-2dt}{1+t^2}\to -\om_{-1}^{-1}, \\
\quad \om_{+1}^{+1}=\frac{2tdt}{1-t^2}\to \frac{-(1-t)dt}{t(1+t)}=\om_{+1}^{-1}-\om_0-\om_{+1}^{+1}, \\
\quad \om_{-1}^{+1}=\frac{-2tdt}{1+t^2}\to \frac{2(1-t)dt}{(1+t^2)(1+t)}=\om_{+1}^{-1}-\om_{+1}^{+1}+\om_{-1}^{+1}.
\end{align*}
The theorem follows immediately.
\end{proof}

\begin{re}\label{rem:genDual}
(1). Note that on the left-hand side of the generalized duality relation \eqref{equ:dualMMVo} the last 1-form
must be $\om_{\pm 1}^{-1}$. It is not difficult to adopt a regularization procedure to derive the generalized
duality relation with ending 1-form equal to $\om_{\pm 1}^{+1}$. We will leave this to the interested reader.

(2). If $\om_{\pm1}^{+1}$ appears on the left-hand side of \eqref{equ:dualMMVo} then there are always more than
one term on the right since the terms produced on the right-hand side are all distinct and cannot cancel each other.
\end{re}

\begin{exa}
To illustrate Remark \ref{rem:genDual} concretely, we have found the following
generalized duality relations first of which does not involve $\om_{\pm1}^{+1}$:
\begin{align*}
&\begin{aligned}
M(\wc2,1,\cbar2)={}&\int_0^1 \om_0\om^{-1}_{+1}\om^{-1}_{+1}\om_0\om^{-1}_{-1}
=\int_0^1 u_{-1}^{-1}u_0 u_{+1}^{-1}u_{+1}^{-1} u_0
=M(\cbar1,\bar1,\cbar3)\approx -1.514104292,
\end{aligned}\\
&\begin{aligned}
M(2,\wc{1},\cbar2)={}&\int_0^1 \om_0\om_{+1}^{-1}\om_{+1}^{+1}\om_0\om_{-1}^{-1}=\int_0^1 u_{-1}^{-1}u_0u_{+1}^{+1}u_{+1}^{-1}u_0 \\
={}& M(\cbar{1},\bar1,\wc{3})+M(\cbar{1},\bar1,\wc{1},\wc{2})
+M(\bar1,\cbar{1},1,\wc{2})\approx  -0.872885216,
\end{aligned}\\
&\begin{aligned}
M(\cbar2,\wc{1},\cbar1)={}&\int_0^1 \om_0\om^{+1}_{-1}\om^{+1}_{-1}\om^{-1}_{+1}=\int_0^1 u_{+1}^{-1}u_{-1}^{+1}u_{-1}^{+1}u_0\\
={}&M(\wc{2},\wc{1},\wc{1})+M(\wc{2},1,\wc{1})+M(2,\cbar1,\cbar1)+M(\bar2,\bar1,\wc{1})+M(\cbar2,\wc{1},\cbar1)\\
&\quad-M(2,1,\wc{1})-M(\wc{2},\cbar1,\cbar1)
-M(2,\wc{1},\wc{1})-M(\cbar2,\cbar1,\wc{1})\approx 0.0642145399,
\end{aligned}\\
&\begin{aligned}
M(\bar2,\bar3,\cbar{1})={}&\int_0^1 \om_0\om_{-1}^{+1}\om_0^2\om_{+1}^{-1}\om_{-1}^{-1}=\int_0^1 u_{-1}^{-1}u_{+1}^{-1}u_0^2u_{-1}^{+1}u_0\\
={}&-M(\bar1,\cbar{2},1,\wc{1},\wc{1})-M(\cbar{1},\bar2,\wc{1},1,\wc{1})
+M(\bar1,\cbar{2},1,\cbar1,\cbar1)\approx 0.0463195291,
\end{aligned}\\
&\begin{aligned}
M(\cbar3,\wc{1},\bar2,\wc{1})={}&\int_0^1 \om_0^2\om_{-1}^{+1}\om_{-1}^{-1}\om_0\left(\om_{+1}^{-1}\right)^2=\int_0^1 \left(u_{+1}^{-1}\right)^2u_0u_{-1}^{-1}u_{-1}^{+1}u_0^2\\
={}&M(3,\cbar1,\bar1,1,\wc{1})+M(\wc{3},\bar1,\cbar1,1,\wc{1})
-M(3,\cbar1,1,\bar1,\wc{1})\approx 0.0072347087.
\end{aligned}
\end{align*}
\end{exa}

We end this section by the following parity theorem of AMMVs of arbitrary depth which confirms one of
our previous conjectures (see \cite[Conj. 5.2]{XuZhao2020b}). The key step is the 
general parity result of Panzer on colored MZVs (see \cite{Panzer2017}). For $\sigma,\eps=\pm1$, we define
\begin{equation}\label{equ:iot_gs_eps}
    \iota(\sigma,\eps)=(-1)^{(1-\sigma)(1-\eps)/4}=
\left\{
  \begin{array}{ll}
    -1, \quad \ & \hbox{if $\sigma=\eps=-1$;} \\
    1, \quad & \hbox{otherwise.}
  \end{array}
\right.
\end{equation}
For $d,w\in\N$ we set $\CMZV_w^d$ (resp.\ $\AMMV_w^d$) to be the $\Q$-span of all CMZVs 
(resp.\ AMMVs) of weight $w$ and depth $d$. For $w\ge d\ge 2$, we define
\begin{align*}
& \CMZV_w^{<d}=\sum_{0<\ell<d}\CMZV_w^{\ell}+\sum_{\substack{w_1+w_2=w,\ w_1,w_2>0\\ d_1+d_2=d,\ d_1,d_2>0} } \CMZV_{w_1}^{d_1}\cdot \CMZV_{w_2}^{d_2},\\
& \AMMV_w^{<d}=\sum_{0<\ell<d}\AMMV_w^{\ell} +\sum_{\substack{w_1+w_2=w,\ w_1,w_2>0\\ d_1+d_2=d,\ d_1,d_2>0} } \AMMV_{w_1}^{d_1}\cdot\AMMV_{w_2}^{d_2}.
\end{align*}

\begin{thm}\label{thm:MMVparity}
Suppose $\bfk$ has depth $r\ge 2$ with weight $w$. For all admissible $M_\bfgs^\bfeps(\bfk)$
\begin{equation*}
\Big(1-(-1)^{r+w} \prod_{j=1}^r \iota(\sigma_j,\eps_j)\Big) M_\bfgs^\bfeps(\bfk)\in \AMMV_w^{<r}.
\end{equation*} 
\end{thm}

\begin{proof}
Applying Panzer's parity theorem  (see \cite[Theorem~1.3]{Panzer2017}) to
the right-hand side of \eqref{equ:AMMZ2CMZV4a} (after taking $N\to\infty$), we get
\begin{align*}
M_\bfgs^\bfeps(\bfk) \equiv {}& (-1)^{r+w}\bigg(\prod_{j=1}^r \mu_j^{(1-\eps_j)/2} \bigg)
\sum_{\gd_1=\pm 1}\cdots \sum_{\gd_r=\pm1}
\Big( \tilde{\eps}_1(\gd_1)\cdots \tilde{\eps}_r(\gd_r)\Big) \Li_\bfk(\gd_1\mu_1^{-1},\dots,\gd_r\mu_r^{-1})
\end{align*}
modulo $\CMZV_w^{<r}$. But it is clear that  $\mu_j^{-1}=\mu_j\sigma_j$ for all $j=1,\dots,r$. By the change of variables
$\gd_j\to \gd_j \sigma_j$ we see that
\begin{align*}
M_\bfgs^\bfeps(\bfk) \equiv {}& (-1)^{r+w}\bigg(\prod_{j=1}^r \mu_j^{(1-\eps_j)/2} \bigg)
\sum_{\gd_1=\pm 1}\cdots \sum_{\gd_r=\pm1}
\Big( \tilde{\eps}_1(\gd_1\sigma_1)\cdots \tilde{\eps}_r(\gd_r\sigma_r)\Big) \Li_\bfk(\gd_1\mu_1,\dots,\gd_r\mu_r).
\end{align*}
From \eqref{equ:epsgdRel} and the fact that $\tilde{\eps}_j(\sigma_j)=\iota(\sigma_j,\eps_j)$ we see that
\begin{equation*}
\Big(1-(-1)^{r+w} \prod_{j=1}^r \iota(\sigma_j,\eps_j)\Big) M_\bfgs^\bfeps(\bfk)\in \RRe \CMZV_w^{<r}.
\end{equation*}
By \eqref{equ:CMZV2AMMV} we know that for all weight $k$ and depth $d$, 
\begin{equation*}
\CMZV_k^{d}\subseteq \AMMV_k^{d}+i\cdot \AMMV_k^{d}.
\end{equation*}
Therefore 
\begin{equation*}
\RRe  \Big(\CMZV_{w_1}^{d_1}\cdot\CMZV_{w_2}^{d_2}\Big) \subseteq \AMMV_{w_1}^{d_1}\cdot\AMMV_{w_2}^{d_2}.
\end{equation*}
The theorem now follows immediately.
\end{proof}
 
We would like to point out that the proof of Theorem~\ref{thm:MMVparity} does not provide the reduction formula 
when $(-1)^{r+w} \prod\limits_{j=1}^r \iota(\sigma_j,\eps_j)=-1$. However, by residue computation as used in \cite{XuWang2020} it is possible
to find this explicitly, at least when $r=3$, even though it is much more complicated than the proof above. 
In fact, we obtained all the following identities by this method.

\begin{exa}
We have the following reduction identities:
\begin{align*}
M(\bar1,\wc2,\wc1)={}&M(\wc3,\cbar1)-2M(\wc3,\wc1)-M(\wc2,\wc2)-2M(\wc2,\cbar1)\log2-\tfrac{\pi}{2}M(2,\cbar1)
		+\tfrac{35}{4}\zeta(3)\log2-\tfrac{\pi^2}{2}\log^2 2-\tfrac{\pi^4}{16},\\
M(2,\wc2,\cbar1)={}&M(\wc2,\cbar3)+M(\cbar3,2)+2M(\wc3,\cbar2)+3M(\wc4,\cbar1)+\tfrac{\pi^2}{8}M(\wc2,\cbar1)+\tfrac{\pi}{2}M(2,\wc2)+M(\cbar2,2)\log2\\
		 &+7G\zeta(3)+\tfrac{7\pi}{2}\zeta(3)\log2-6\beta(4)\log2-\tfrac{\pi^2}{12}G\log2-\tfrac{\pi^5}{24},\\
M(\bar2,\cbar3,\wc2)={}&3M(\cbar3,\wc4)+6M(\cbar4,\wc3)+6M(\cbar5,\wc2)+M(\cbar5,\bar2)-\tfrac{\pi^2}{8}M(\cbar3,\wc2)+\tfrac{\pi^2}{4}M(\bar3,\cbar2)\\
		&+21\zeta(3)\beta(4)+\tfrac{5\pi^2}{32}G\zeta(3)-\tfrac{173\pi^7}{18432},\\ 
M(\wc2,\wc4,\bar2)={}&3M(\wc4,\wc4)+8M(\wc5,\wc3)+10M(\wc6,\wc2)-M(\wc6,\cbar2) +\tfrac{\pi^2}{4}M(\wc4,\cbar2)+\tfrac{\pi^2}{24}M(\wc4,\wc2)\\
		&+\pi M(4,\cbar3)+2\pi M(5,\cbar2)-\tfrac{217}{8}\zeta(3)\zeta(5)-\tfrac{3\pi}{2}\zeta(3)\beta(4)-\tfrac{\pi^3}{16}G\zeta(3)+\tfrac{13\pi^8}{2880},\\
M(\bar1,\bar2,\wc1)={}&-2M(\bar2,\bar1)\log2-M(\bar2,\wc2)-2M(\bar3,\wc1)-M(3,\wc1)
		-\tfrac{27}{8}\zeta(3)\log2+\tfrac{\pi^2}{4}\log^2 2,\\
M(2,\wc2,\bar2)={}&-3M(2,\wc4)-3M(\bar2,\wc4)-4M(3,\wc3)-4M(\bar3,\wc3)-3M(4,\wc2)-3M(\bar4,\wc2)-\tfrac{\pi^2}{24}M(2,\wc2)\\
		&-\tfrac{\pi^2}{8}M(\bar2,\wc2)-\pi M(\wc2,\cbar3)-\pi M(\cbar2,\wc3)
		-\pi M(\wc3,\cbar2)-\pi M(\cbar3,\wc2)-7\pi G\zeta(3)+\tfrac{5\pi^6}{192},\\		 M(2,\cbar2,3)={}&4M(2,\cbar5)+3M(3,\cbar4)-2M(\cbar2,5)-4M(5,\cbar2)+\tfrac{1}{4}\zeta(3)M(2,\cbar2)+\tfrac{\pi^2}{12}M(2,\cbar3)+2\pi M(\wc3,\wc3)
		\\
		 &+\tfrac{\pi^2}{12}M(\cbar2,3)+\tfrac{\pi^2}{24}M(3,\cbar2)+\tfrac{3\pi}{2}M(\wc2,\wc4)+\tfrac{3\pi}{2}M(\wc4,\wc2)-\tfrac{1}{4}G\zeta(5)
        +\tfrac{\pi^2}{24}G\zeta(3)-\tfrac{49\pi}{8}\zeta^2(3)+\tfrac{\pi^7}{96},\\		 M(\bar3,\wc2,\bar3)={}&8M(\bar3,\wc5)-2M(\wc2,6)-\tfrac{3}{16}\zeta(3)M(\wc2,\cbar3)-\tfrac{3}{8}\zeta(3)M(\bar3,\wc2)
        -\tfrac{\pi^2}{3}M(\bar3,\wc3)-\tfrac{\pi^2}{2}M(\bar3,\bar3)\\
		 &+\tfrac{\pi^2}{2}M(\cbar3,\cbar3)-\tfrac{\pi^2}{2}M(\bar4,\wc2)+18M(\bar4,\wc4)+24M(\bar5,\wc3)+20M(\bar6,\wc2)-2M(6,\wc2)+\tfrac{\pi^8}{960}.
	\end{align*}
\end{exa}

\section{Some results on AMSVs and AMTVs}\label{AMTVs-AMSVs}
In this section, we study alternating multiple $S$- and $T$-values with arguments of the form $(\bar2,1,\ldots,1,\bar1)$, 
with or without the bars at the first and last components. We will establish some explicit relations between alternating 
multiple $S$- and $T$-values by using the integrals of arctangent function.

\begin{lem}\label{lem-orr-2017} \emph{(cf. \cite[Eqs. (2.2) and (2.3)]{Orr2017})}
For $p\in \N$, we have
\begin{align*}
\int_0^{\pi/2} x^p \cot(x)dx={}&\left(\frac{\pi}{2}\right)^p\left\{ \log2+\sum_{k=1}^{[p/2]} \frac{p!(-1)^k(4^k-1)}{(p-2k)!(2\pi)^{2k}}\zeta(2k+1)\right\}
    +\delta_{[p/2],p/2}\frac{p!(-1)^{p/2}}{2^p}\zeta(p+1),\\
\int_0^{\pi/4} x^p \cot(x)dx={}&\frac1{2}\left(\frac{\pi}{4}\right)^p\left\{ \log2+\sum_{k=1}^{[p/2]} \frac{p!(-1)^k(4^k-1)}{(p-2k)!(2\pi)^{2k}}\zeta(2k+1)\atop -\sum_{k=1}^{[(p+1)/2]} \frac{p!(-4)^k\beta(2k)}{(p+1-2k)!\pi^{2k-1}}\right\}
    +\delta_{[p/2],p/2}\frac{p!(-1)^{p/2}}{2^p}\zeta(p+1),
\end{align*}
where $\delta$ is the Kronecker symbol, and the Dirichlet beta function $\beta(s)$ is defined by
\begin{align}\label{defn-Dbeta-function}
\beta(s):=\sum_{n=1}^\infty \frac{(-1)^{n-1}}{(2n-1)^s}\quad (\Re(s)>0).
\end{align}
When $s=2$, $\beta(2)=G$ is known as Catalan's constant.
\end{lem}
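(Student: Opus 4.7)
The plan is to reduce both cotangent integrals to classical Dirichlet series by a two-step procedure: integration by parts against $\log(\sin x)$, followed by term-by-term integration of its Fourier expansion. First, using $\cot(x)\,dx = d[\log\sin x]$, write, for $A \in \{\pi/2,\pi/4\}$,
\begin{equation*}
\int_0^A x^p \cot(x)\,dx = A^p \log(\sin A) - p\int_0^A x^{p-1} \log(\sin x)\,dx,
\end{equation*}
noting that $x^p\log\sin x \to 0$ as $x\to 0^+$. Substituting the classical expansion $\log(\sin x) = -\log 2 - \sum_{k\ge 1}\cos(2kx)/k$ on $(0,\pi)$, and exchanging summation and integration, reduces the problem to evaluating $\int_0^A x^{p-1}\cos(2kx)\,dx$ in closed form. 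I would do this by iterating integration by parts to express the antiderivative as a finite alternating sum whose $j$-th term has the form $\tfrac{(p-1)!}{(p-1-j)!(2k)^{j+1}}\,x^{p-1-j}\,\tau_j(2kx)$, where $\tau_j \in \{\sin,\cos\}$ is determined by the parity of $j$ and signs follow the four-periodic pattern $+,+,-,-,\ldots$.

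For $A=\pi/2$, the boundary values $\sin(k\pi)=0$ and $\cos(k\pi)=(-1)^k$ select only the odd-$j$ terms at the upper limit, producing sums $\sum_k(-1)^k/k^{2m+1} = -(1-2^{-2m})\zeta(2m+1)$. The lower limit $x=0$ contributes only when $j=p-1$ pairs with $\cos$, i.e.\ when $p$ is even, yielding the extra term $\tfrac{p!(-1)^{p/2}}{2^p}\zeta(p+1)$. Collecting terms and rewriting the prefactor $(\pi/2)^{p-2m}/16^m$ as $(\pi/2)^p/(2\pi)^{2m}$ gives the first identity.

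For $A=\pi/4$, both $\sin(k\pi/2)$ (nonzero only for odd $k$) and $\cos(k\pi/2)$ (nonzero only for even $k$) appear. The odd-$k$ contributions from the $\sin$-terms produce Dirichlet beta values $\beta(2m+2)$ via \eqref{defn-Dbeta-function}, while the even-$k$ contributions from the $\cos$-terms again produce alternating zeta sums. Moreover, the initial upper-boundary $A^p\log\sin A$ is no longer zero: $\log\sin(\pi/4) = -\tfrac12\log 2$, and this combines with the main $\log 2$ contribution $\log 2\cdot(\pi/4)^p$ to give the overall factor $\tfrac12(\pi/4)^p\log 2$ in front of the brace. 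After re-indexing (the $\sin$-sum is most naturally written with $k=m+1$, giving $(p+1-2k)!$) and simplifying powers of two, the result matches the stated combination of $\beta(2k)$'s and $\zeta(2k+1)$'s; the $\zeta(p+1)$ tail from the $x=0$ boundary appears unchanged.

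The main obstacle is purely bookkeeping: tracking the four-periodic sign pattern from the iterated integration by parts, the parity-driven selection of $\sin$ vs.\ $\cos$ boundary values at $\pi/2$ and $\pi/4$, and converting prefactors like $(\pi/A)^{p-2m}/4^m$ and $(1-2^{-2m})$ into the compact form stated in the lemma. I would cross-check by verifying the identity by hand for small $p$ (say $p=1,2,3$) against direct numerical evaluation before writing up the general formulas; since the result is attributed to Orr \cite{Orr2017}, the cleanest writeup is to carry out the calculation sketched above and then simply cite that reference for the final form.
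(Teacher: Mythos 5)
The paper does not prove this lemma at all: it is quoted directly from Orr's paper (Eqs.\ (2.2) and (2.3) of the cited reference), so there is no in-paper argument to compare against. Your sketch is the standard derivation and it is sound: I checked the bookkeeping, and both the sign pattern $(-1)^{\lfloor j/2\rfloor}$ from the iterated integration by parts and the prefactor conversions do land exactly on the stated coefficients. In particular, for $A=\pi/2$ the odd-$j$ boundary terms give $\sum_k(-1)^k/k^{2m+1}=-(1-4^{-m})\zeta(2m+1)$ with net sign $(-1)^m$, matching $(4^k-1)/(2\pi)^{2k}=(1-4^{-k})/\pi^{2k}$ against the $(\pi/2)^p$ prefactor; the $x=0$ boundary term survives only for $p$ even and yields $\tfrac{p!(-1)^{p/2}}{2^p}\zeta(p+1)$; and for $A=\pi/4$ the cancellation $-\tfrac12(\pi/4)^p\log 2+(\pi/4)^p\log 2=\tfrac12(\pi/4)^p\log 2$ and the re-indexing $2m+2=2\kappa$ reproduce the $(-4)^\kappa\beta(2\kappa)/\pi^{2\kappa-1}$ terms exactly. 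The only point you should make explicit in a writeup is the justification for integrating the Fourier expansion $\log\sin x=-\log 2-\sum_{k\ge1}\cos(2kx)/k$ term by term: since $\log\sin x\in L^2(0,\pi)$ and $x^{p-1}$ is bounded on $[0,A]$ for $p\ge1$, the $L^2$ convergence of the Fourier series suffices, so this is routine but worth a sentence. With that, your proposal is a complete and correct proof of a statement the paper only cites.
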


\begin{pro}\label{pro-int-arc}
For $p\in \N$, we have
\begin{align*}
\int_0^1 \arctan^p(x)dx
={}&\left(\frac{\pi}{4}\right)^p+p\left(\frac1{2^p}-\delta_{1,p}\right)\Big(\frac{\pi}{2}\Big)^{p-1}\log2\\
&-p\left(\frac{\pi}{2}\right)^{p-1}\sum_{k=1}^{p-1}(-1)^k\binom{p-1}{k}\left\{\left(1-\frac1{2^{k+1}}\right)\sum_{j=1}^{[k/2]} \frac{k!(-1)^j(4^j-1)}{(k-2j)!(2\pi)^{2j}}\zeta(2j+1)\atop +\frac1{2^{k+1}}\sum_{j=1}^{[(k+1)/2]} \frac{k!(-4)^j\beta(2j)}{(k+1-2j)!\pi^{2j-1}}\right\}\\
\in{}& \Q[\log2,\pi,\beta(2),\zeta(3),\beta(4),\zeta(5),\cdots].
\end{align*}
\end{pro}
\begin{proof}
Letting $x=\tan(t)$ and using integration by parts, we note that the integral on the left-hand side can be rewritten as
\begin{align}\label{arctan-cot}
&\int_0^1 \arctan^p(x)dx=\int_0^{{\pi}/{4}} t^p d\tan(t)=\left(\frac{\pi}{4}\right)^p-p\int_0^{\pi/4} t^{p-1} \tan(t)dt\nonumber\\
={}&\left(\frac{\pi}{4}\right)^p-p\int_{\pi/4}^{\pi/2} \left(\frac{\pi}{2}-u\right)^{p-1} \cot(u)du\nonumber\\
={}&\left(\frac{\pi}{4}\right)^p-p\frac{\pi^{p-1}}{2^p} \log2-p\sum_{k=1}^{p-1}(-1)^k\binom{p-1}{k} \left(\frac{\pi}{2}\right)^{p-1-k} \int_{\pi/4}^{\pi/2} u^k \cot(u)du.
\end{align}
Thus, applying Lemma \ref{lem-orr-2017} yields the desired description.
\end{proof}

\begin{exa}\label{eg:ArcTanPowerIntegral}
As examples, here are a few evaluations of integrals of arc tangent powers over $[0,1]$:
\begin{align*}
&\int_0^1 \arctan(x)dx=\frac{\pi}{4}-\frac1{2}\log2,\\
&\int_0^1 \arctan^2(x)dx=\frac{\pi^2}{16}+\frac1{4}\pi\log2-G,\\
&\int_0^1 \arctan^3(x)dx=\frac{\pi^3}{64}+\frac{3}{32}\pi^2\log2-\frac3{4}\pi G+\frac{63}{64}\zeta(3),\\
&\int_0^1 \arctan^4(x)dx=\frac{\pi^4}{256}+\frac{\pi^3}{32}\log2-\frac3{8}\pi^2 G-\frac{9}{64}\pi\zeta(3)+3\beta(4).
\end{align*}
\end{exa}

\begin{re}
In \cite[p. 122]{Adams-Hippsley}, one can find the Maclaurin series expansion
\begin{align}\label{arctan^p-expansion}
\arctan^{p}(x)=p!\sum_{k_0=1}^{\infty}(-1)^{k_0-1}\frac{x^{2k_0+p-2}}{2k_0+p-2}\prod_{\alpha=1}^{p-1}\left(\sum_{k_{\alpha}=1}^{k_{\alpha-1}}\frac{1}{2k_{\alpha}+p-\alpha-2}\right)
\end{align}
for $p\in\mathbb{N}$. Integrating \eqref{arctan^p-expansion} from 0 to 1 results in
\begin{align*}
\int_0^1 \arctan^{p}(x)dx
={}&p!\sum_{k_0=1}^{\infty}\frac{(-1)^{k_0-1}}{(2k_0+p-1)(2k_0+p-2)}
\prod_{\alpha=1}^{p-1}\left(\sum_{k_{\alpha}=1}^{k_{\alpha-1}}\frac{1}{2k_{\alpha}+p-\alpha-2}\right)\\
={}&p!\sum_{k_0=1}^{\infty}\frac{(-1)^{k_0-1}}{2k_0+p-2}
\prod_{\alpha=1}^{p-1}\left(\sum_{k_{\alpha}=1}^{k_{\alpha-1}}\frac{1}{2k_{\alpha}+p-\alpha-2}\right)\\
&-p!\sum_{k_0=1}^{\infty}\frac{(-1)^{k_0-1}}{2k_0+p-1}
\prod_{\alpha=1}^{p-1}\left(\sum_{k_{\alpha}=1}^{k_{\alpha-1}}\frac{1}{2k_{\alpha}+p-\alpha-2}\right).
\end{align*}
Therefore, we have
\begin{align*}
\int_0^1 \arctan^{p}(x)dx=
\begin{cases}
\frac{(-1)^{\tfrac{p}{2}}p!}{2^p}I_p  &\text{if $p$ is even;}\\
\frac{(-1)^{\tfrac{p+1}{2}}p!}{2^p}J_p  &\text{if $p$ is odd,}
\end{cases}
\end{align*}
where (setting $q=\lfloor{(p-1)/2}\rfloor$)
\begin{align*}
I_p=M(\bar1,\wc1,\{1,\wc1\}_q)+M(\cbar1,\wc1,\{1,\wc1\}_q),\quad
J_p=M(\cbar1,\{1,\wc1\}_q)-M(\bar1,\{1,\wc1\}_q),
\end{align*}
where $\{S\}_m$ means the string $S$ repeats itself $m$ times. For example,
\begin{align*}
&\int_0^1 \arctan(x)dx=\frac{1}{2}\big(M(\bar1)-M(\cbar1)\big),\\
&\int_0^1 \arctan^2(x)dx=-\frac{1}{2}\big(M(\bar1,\wc1)+M(\cbar1,\wc1)\big),\\
&\int_0^1 \arctan^3(x)dx=\frac{3}{4}\big(M(\cbar1,1,\wc1)-M(\bar1,1,\wc1)\big),\\
&\int_0^1 \arctan^4(x)dx=\frac{3}{2}\big(M(\bar1,\wc1,1,\wc1)+M(\cbar1,\wc1,1,\wc1)\big),
\end{align*}
which are consistent with the evaluations in Example~\ref{eg:ArcTanPowerIntegral}.
\end{re}

Next, we derive the formulas for the moments of arc tangent powers over $[0,1]$.
For positive integers $m$ and $n$ such that $n\ge m$, we set
\begin{align*}
& T^{\bfgs_{2m-1}}_n:=T^{\bfgs_{2m-1}}_n(\{1\}_{2m-1}),
\quad T^{\bfgs_{2m}}_n:=T^{\bfgs_{2m}}_n(\{1\}_{2m}),\\
& S^{\bfgs_{2m-1}}_n:=S^{\bfgs_{2m-1}}_n(\{1\}_{2m-1}),
\quad S^{\bfgs_{2m}}_n:=S^{\bfgs_{2m}}_n(\{1\}_{2m}).
\end{align*}
For any $n\in \N$ we put $T_{n}^{\bfgs_0}=S_{n}^{\bfgs_0}:=1$.

\begin{thm}
For positive integers $n$ and $m$,
\begin{align}\label{even-even-xarctanx}
\int_0^1 x^{2n-2}\arctan^{2m}(x)dx 
={}&\frac{1+(-1)^n}{2n-1}t^{2m}(\bar 1)+\frac{(-1)^{n+m}(2m)!}{2^{2m}(2n-1)}T_n^{\{1\}_{2m-1},-1}\nonumber\\
&-\frac{(-1)^{n}(2m)!}{2n-1}\sum_{u=0}^{m-1}\frac{(-1)^uT_n^{\{1\}_{2u}}}{(2m-2u)!2^{2u}}A(2m-2u)\nonumber\\
&+\frac{(-1)^{n}(2m)!}{2n-1}\sum_{v=1}^{m-1}\frac{(-1)^{v}t^{2m-2v}(\bar 1)}{2^{2v}(2m-2v)!}\left(T_n^{\{1\}_{2v}}+T_n^{\{1\}_{2v-1},-1}\right)\nonumber\\
&+\frac{(-1)^{n}(2m)!}{2n-1}\sum_{v=0}^{m-1}\frac{(-1)^{v}t^{2m-2v-1}(\bar 1)}{2^{2v+1}(2m-2v-1)!}\left(S_n^{\{1\}_{2v+1}}-S_n^{\{1\}_{2v},-1}\right),
\end{align}
\begin{align}\label{even-odd-xarctanx}
\int_0^1 x^{2n-2}\arctan^{2m-1}(x)dx
           ={}&-\frac{1+(-1)^n}{2n-1}t^{2m-1}(\bar 1)-\frac{(-1)^{n+m}(2m-1)!}{2^{2m-1}(2n-1)}S_n^{\{1\}_{2m-2},-1}\nonumber\\
            &-\frac{(-1)^{n}(2m-1)!}{2n-1}\sum_{u=0}^{m-1}\frac{(-1)^uT_n^{\{1\}_{2u}}}{(2m-2u-1)!2^{2u}}A(2m-2u-1)\nonumber\\
            &+\frac{(-1)^{n}(2m-1)!}{2n-1}\sum_{v=1}^{m-1}\frac{(-1)^{v+1}t^{2m-2v-1}(\bar 1)}{2^{2v}(2m-2v-1)!}\left(T_n^{\{1\}_{2v}}+T_n^{\{1\}_{2v-1},-1}\right)\nonumber\\
            &+\frac{(-1)^{n}(2m-1)!}{2n-1}\sum_{v=1}^{m-1}\frac{(-1)^{v}t^{2m-2v}(\bar 1)}{2^{2v-1}(2m-2v)!}\left(S_n^{\{1\}_{2v-1}}-S_n^{\{1\}_{2v-2},-1}\right),
\end{align}
\begin{align}\label{odd-even-xarctanx}
\int_0^1 x^{2n-1}\arctan^{2m}(x)dx 
={}&\frac{1-(-1)^n}{2n}t^{2m}(\bar 1)+\frac{(-1)^{n+m}(2m)!}{2^{2m}(2n)}S_{n}^{\{1\}_{2m-1},-1}\nonumber\\
&+\frac{(-1)^{n}(2m)!}{2n}\sum_{u=0}^{m-1}\frac{(-1)^uT_{n}^{\{1\}_{2u+1}}}{(2m-2u-1)!2^{2u+1}}A(2m-2u-1)\nonumber\\
&+\frac{(-1)^{n}(2m)!}{2n}\sum_{v=0}^{m-1}\frac{(-1)^{v}t^{2m-2v-1}(\bar 1)}{2^{2v+1}(2m-2v-1)!}\left(T_{n}^{\{1\}_{2v+1}}+T_{n}^{\{1\}_{2v},-1}\right)\nonumber\\
&+\frac{(-1)^{n}(2m)!}{2n}\sum_{v=1}^{m-1}\frac{(-1)^{v+1}t^{2m-2v}(\bar 1)}{2^{2v}(2m-2v)!}\left(S_{n}^{\{1\}_{2v}}-S_{n}^{\{1\}_{2v-1},-1}\right),
\end{align}
\begin{align}\label{odd-odd-xarctanx}
\int_0^1 x^{2n-1}\arctan^{2m-1}(x)dx 
={}&-\frac{1-(-1)^n}{2n}t^{2m-1}(\bar 1)+\frac{(-1)^{n+m}(2m-1)!}{2^{2m-1}(2n)}T_{n}^{\{1\}_{2m-2},-1}\nonumber\\
&-\frac{(-1)^{n}(2m-1)!}{2n}\sum_{u=1}^{m-1}\frac{(-1)^uT_{n}^{\{1\}_{2u-1}}}{(2m-2u)!2^{2u-1}}A(2m-2u)\nonumber\\
&+\frac{(-1)^{n}(2m-1)!}{2n}\sum_{v=1}^{m-1}\frac{(-1)^{v}t^{2m-2v}(\bar 1)}{2^{2v-1}(2m-2v)!}\left(T_{n}^{\{1\}_{2v-1}}+T_{n}^{\{1\}_{2v-2},-1}\right)\nonumber\\
&+\frac{(-1)^{n}(2m-1)!}{2n}\sum_{v=1}^{m-1}\frac{(-1)^{v}t^{2m-2v-1}(\bar 1)}{2^{2v}(2m-2v-1)!}\left(S_{n}^{\{1\}_{2v}}-S_{n}^{\{1\}_{2v-1},-1}\right),
\end{align}
where $A(p):=\int_0^1 \arctan^p(x)dx$, and it can be explicitly expressed by \eqref{arctan-cot} and Lemma \ref{lem-orr-2017}.
\end{thm}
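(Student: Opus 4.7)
\bigskip
\noindent\emph{Proof proposal.}
The plan is to prove the four identities simultaneously by induction on the exponent $m$ of $\arctan$, using integration by parts as the main analytic tool, and using the recursions \eqref{relations-odd-alternatingmultipleT-harmonicsums}--\eqref{relations-even-alternatingmultipleS-harmonicsums} as the main combinatorial tool. The base case $m=1$ will be treated directly: for $m=1$ the sums indexed by $v\ge 1$ are empty, only the $T_n^{-1}$ or $S_n^{-1}$ term survives together with the $A(1)$ contribution, and a short integration by parts against $\arctan(x)$ settles it. The key integration by parts identity, valid for any $m\ge 1$, is
\begin{equation*}
\int_0^1 x^{k}\arctan^{m}(x)\,dx
=\frac{(\pi/4)^{m}}{k+1}-\frac{m}{k+1}\int_0^1\frac{x^{k+1}}{1+x^2}\,\arctan^{m-1}(x)\,dx,
\end{equation*}
which reduces the power of $\arctan$ by one at the price of inserting the factor $1/(1+x^2)$.

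The next step is to clear the factor $1/(1+x^2)$ by the algebraic identity $x^{k+1}/(1+x^2)=\sum_{j}(-1)^{j-1}x^{k+1-2j}+(-1)^{\star}\cdot\frac{1}{1+x^2}$ or $\cdot\frac{x}{1+x^2}$ according to the parity of $k$. The polynomial part generates integrals of the form $\int_0^1 x^{k'}\arctan^{m-1}(x)\,dx$ of lower $\arctan$-degree, to which the induction hypothesis applies; the remainder piece is evaluated directly, using
\begin{equation*}
\int_0^1\frac{x\,\arctan^{m-1}(x)}{1+x^2}\,dx=\frac{(\pi/4)^{m}}{m}-\frac{A(m)}{m},
\qquad
\int_0^1\frac{\arctan^{m-1}(x)}{1+x^2}\,dx=\frac{(\pi/4)^{m}}{m},
\end{equation*}
the first via $\tfrac{1}{2m}\int_0^1 x\,d(\arctan^{m}(x))$ and the second directly. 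This produces the ``isolated'' $A(2m-2u)$ or $A(2m-2u-1)$ contributions in the stated formulas.

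Substituting the inductive expressions and collecting coefficients, each term on the right-hand side of, say, \eqref{even-even-xarctanx} is of one of four types: a pure $t^{j}(\bar 1)$-contribution, a pure $A(j)$-contribution times a $T$-harmonic sum, a $t^{2j}(\bar 1)$ times a sum $T_n^{\{1\}_{2v}}+T_n^{\{1\}_{2v-1},-1}$, and a $t^{2j+1}(\bar 1)$ times $S_n^{\{1\}_{2v+1}}-S_n^{\{1\}_{2v},-1}$. After expanding the finite sums $\sum_{j=1}^{n-1}(-1)^{j-1}$ that come from the partial fraction step, the identification of each collected combination with the stated multiple harmonic sum is precisely the content of \eqref{relations-odd-alternatingmultipleT-harmonicsums}--\eqref{relations-even-alternatingmultipleS-harmonicsums}: these identities say that summing one of the four basic harmonic sums against $1/(2j-1)^{k}$ or $1/(2j)^{k}$ (with alternating sign $\pm 1$) produces exactly a new harmonic sum of depth one greater. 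The parity bookkeeping for the four cases \eqref{even-even-xarctanx}--\eqref{odd-odd-xarctanx} is driven by whether $n$ is being shifted by an even or odd step at each partial-fraction expansion.

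The main obstacle is not any one computation but the bookkeeping: verifying that after integration by parts, partial fractions, invocation of the induction hypothesis, and reassembly, the signs $(-1)^{n}$, $(-1)^{m}$, $(-1)^{u}$, $(-1)^{v}$, the combinatorial factors $1/(2m-2u)!$, and the shifts between $T$ and $S$ harmonic sums (odd vs.\ even last index, and the sign $-1$ on the last slot) all come out precisely as written. I would therefore organize the calculation by first fixing the parity of $m$, carrying out the induction step to the formulas of opposite $\arctan$-parity, and then using the harmonic-sum recursions to pass between $T$-sums ending in $\pm 1$ and $S$-sums ending in $\pm 1$. The final step is a pattern match: the four identities form a closed system under the induction, so once the $m=1$ and $m=2$ cases are verified the inductive step is essentially mechanical but lengthy.
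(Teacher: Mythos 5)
Your proposal follows essentially the same route as the paper's proof: integrate by parts against $d(x^{k})$ to drop the $\arctan$-power by one, expand $x^{k}/(1+x^2)$ into a polynomial plus a remainder of the form $1/(1+x^2)$ or $x/(1+x^2)$, evaluate the two remainder integrals directly (the second by parts, producing the $A(p)$ terms), and iterate the resulting pair of recurrences while identifying the alternating inner sums with the harmonic sums via \eqref{relations-odd-alternatingmultipleT-harmonicsums}--\eqref{relations-even-alternatingmultipleS-harmonicsums}. The only blemish is the factor $\tfrac{1}{2m}$ in your justification of $\int_0^1 \frac{x\arctan^{m-1}(x)}{1+x^2}\,dx$, which should be $\tfrac{1}{m}$; the stated value of that integral is nonetheless correct.
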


\begin{proof}
Consider the integral
\begin{align*}
\int_0^1 x^{k-1}\arctan^p(x)dx=\frac1{k}\int_0^1 \arctan^p(x)dx^k 
={}&\frac{(-1)^p t^p(\bar 1)}{k}-\frac{p}{k} \int_0^1\frac{x^k}{1+x^2}\arctan^{p-1}(x)dx,
\end{align*}
where we use the identity $\arctan(1)=-t(\bar 1)=\frac{\pi}{4}=-\sum_{n=1}^\infty \frac{(-1)^n}{2n-1}=\beta(1)$.
When $k=2n$ then
\begin{align}\label{k=2n}
\int_0^1 x^{2n-1}&\arctan^p(x)dx=\frac{(-1)^p t^p(\bar 1)}{2n}-\frac{p}{2n} \int_0^1\frac{x^{2n}}{1+x^2}\arctan^{p-1}(x)dx\nonumber\\
={}&\frac{(-1)^p t^p(\bar 1)}{2n}-\frac{p}{2n} \int_0^1 \left\{\frac{(-1)^n}{1+x^2}+(-1)^n\sum_{k=1}^n (-1)^k x^{2k-2}\right\}\arctan^{p-1}(x)dx\nonumber\\
={}&\frac{(-1)^p t^p(\bar 1)}{2n}-\frac{p}{2n} (-1)^n \int_0^1 \frac{\arctan^{p-1}(x)}{1+x^2}dx -\frac{p}{2n} (-1)^n \sum_{k=1}^n (-1)^k \int_0^1 x^{2k-2}\arctan^{p-1}(x)dx\nonumber\\
={}&\frac{1-(-1)^n}{2n} (-1)^p t^p(\bar 1)-p \frac{(-1)^n}{2n}\sum_{k=1}^n (-1)^k \int_0^1 x^{2k-2}\arctan^{p-1}(x)dx.
\end{align}
When $k=2n-1$ then
\begin{align}\label{k=2n-1}
&\int_0^1 x^{2n-2}\arctan^p(x)dx=\frac{(-1)^p t^p(\bar 1)}{2n-1}-\frac{p}{2n-1} \int_0^1\frac{x^{2n-1}}{1+x^2}\arctan^{p-1}(x)dx\nonumber\\
={}&\frac{(-1)^p t^p(\bar 1)}{2n-1}-\frac{p}{2n-1} \int_0^1 x\left\{\frac{(-1)^{n-1}}{1+x^2}+(-1)^{n-1}\sum_{k=1}^{n-1} (-1)^k x^{2k-2}\right\}\arctan^{p-1}(x)dx\nonumber\\
={}&\frac{(-1)^p t^p(\bar 1)}{2n-1}+p \frac{(-1)^n}{2n-1} \int_0^1 \frac{x\arctan^{p-1}(x)}{1+x^2}dx\ +p \frac{(-1)^n}{2n-1}\sum_{k=1}^{n-1} (-1)^k \int_0^1 x^{2k-1}\arctan^{p-1}(x)dx.
\end{align}
Integration by parts leads to
\begin{align*}
\int_0^1 \frac{x\arctan^{p-1}(x)}{1+x^2}dx={}&\int_0^1 x\arctan^{p-1}(x)d(\arctan(x))\\
={}&(-1)^p t^p(\bar 1)-\int_0^1 \arctan^{p}(x)dt-(p-1)\int_0^1 \frac{x\arctan^{p-1}(x)}{1+x^2}dx,
\end{align*}
and hence
\begin{align}\label{p-integral}
p\int_0^1 \frac{x\arctan^{p-1}(x)}{1+x^2}dx=(-1)^p t^p(\bar 1)-\int_0^1 \arctan^{p}(x)dx.
\end{align}
Plugging \eqref{p-integral} into \eqref{k=2n-1}, we obtain
\begin{align}\label{2,k=2n-1}
\int_0^1 x^{2n-2}\arctan^p(x)dx 
={}&\frac{1+(-1)^n}{2n-1}(-1)^p t^p(\bar 1)-\frac{(-1)^n}{2n-1}\int_0^1 \arctan^{p}(x)dx\nonumber\\&\quad+p \frac{(-1)^n}{2n-1}\sum_{k=1}^{n-1} (-1)^k \int_0^1 x^{2k-1}\arctan^{p-1}(x)dx.
\end{align}
Using the two recurrence relations \eqref{k=2n} and \eqref{2,k=2n-1}, we get the desired evaluations immediately.
\end{proof}

\begin{thm}\label{thm-AMTV-AMSV-relation}
For any positive integer $m$,
\begin{align}
\label{arctan^2m/x}
\int_0^1 \frac{\arctan^{2m}(x)}{x}dx 
={}&t^{2m-1}(\bar 1)(t(\bar{2})+t(2))+\frac{(-1)^{m}(2m-1)!}{2^{2m}}S(2,\{1\}_{2m-2},\bar{1})\nonumber\\
&\quad+(2m-1)!\sum_{u=0}^{m-1}\frac{(-1)^uA(2m-2u-1)}{(2m-2u-1)!2^{2u+1}}T(2,\{1\}_{2u})\nonumber\\
&\quad-(2m-1)!\sum_{v=1}^{m-1}\frac{(-1)^{v+1}t^{2m-2v-1}(\bar 1)}{2^{2v+1}(2m-2v-1)!}\left(T(2,\{1\}_{2v})+T(2,\{1\}_{2v-1},\bar{1})\right)\nonumber\\
&\quad-(2m-1)!\sum_{v=1}^{m-1}\frac{(-1)^{v}t^{2m-2v}(\bar 1)}{2^{2v}(2m-2v)!}\left(S(2,\{1\}_{2v-1})-S(2,\{1\}_{2v-2},\bar{1})\right)
,\\
\label{arctan^2m+1/x}
\int_0^1 \frac{\arctan^{2m+1}(x)}{x}dx 
={}&-t^{2m}(\bar 1)(t(\bar{2})+t(2))-\frac{(-1)^{m}(2m)!}{2^{2m+1}}T(2,\{1\}_{2m-1},\bar{1})\nonumber\\
&\quad+(2m)!\sum_{u=0}^{m-1}\frac{(-1)^uA(2m-2u)}{(2m-2u)!2^{2u+1}}T(2,\{1\}_{2u})\nonumber\\
&\quad-(2m)!\sum_{v=1}^{m-1}\frac{(-1)^{v}t^{2m-2v}(\bar 1)}{2^{2v+1}(2m-2v)!}\left(T(2,\{1\}_{2v})+T(2,\{1\}_{2v-1},\bar{1})\right)\nonumber\\
&\quad-(2m)!\sum_{v=0}^{m-1}\frac{(-1)^{v}t^{2m-2v-1}(\bar 1)}{2^{2v+2}(2m-2v-1)!}\left(S(2,\{1\}_{2v+1})-S(2,\{1\}_{2v},\bar{1})\right),
\end{align}
where $t(\bar k)=-\beta(k)$ for $k\in \N$.
\end{thm}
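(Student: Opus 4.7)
The plan is to reduce \eqref{arctan^2m/x} and \eqref{arctan^2m+1/x} to the preceding theorem by integrating termwise against the Maclaurin series of $\arctan(x)/x$. First I would use
\begin{equation*}
\frac{\arctan(x)}{x}=\sum_{n=1}^{\infty}\frac{(-1)^{n-1}}{2n-1}x^{2n-2}
\end{equation*}
to write
\begin{equation*}
\int_0^1\frac{\arctan^p(x)}{x}\,dx=\sum_{n=1}^{\infty}\frac{(-1)^{n-1}}{2n-1}\int_0^1 x^{2n-2}\arctan^{p-1}(x)\,dx,
\end{equation*}
the interchange of sum and integral being justified by Abel's theorem and the boundedness of $\arctan^{p-1}$ on $[0,1]$. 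Then for even $p=2m$ I would substitute \eqref{even-odd-xarctanx} and for odd $p=2m+1$ I would substitute \eqref{even-even-xarctanx}.

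After distributing the outer factor $(-1)^{n-1}/(2n-1)$, every summand picks up denominator $(2n-1)^2$, and the cancellation $(-1)^{n-1}\cdot(-1)^n=-1$ removes the $n$-dependent sign on each $T_n^{\bfsi}$- and $S_n^{\bfsi}$-term. Interchanging the outer $n$-summation with the inner finite $u$- and $v$-summations, every remaining $n$-series falls into one of three types: (i) the boundary series $\sum_{n\ge1}(-1)^{n-1}(1+(-1)^n)(2n-1)^{-2}=\beta(2)-t(2)=-t(\bar 2)-t(2)$, whose combination with the explicit $t^{p-1}(\bar 1)$ prefactor produces the product term $t^{p-1}(\bar 1)(t(\bar 2)+t(2))$; (ii) sums $\sum_{n\ge1}T_n^{\bfsi}/(2n-1)^2$ with $\bfsi$ of even length, which by the $T$-recurrence \eqref{relations-odd-alternatingmultipleT-harmonicsums} (applied with prepended $\gs_1=1$ and $k_1=2$) evaluate to $\tfrac12$ times an AMTV of the form $T(2,\{1\}_{\dep(\bfsi)})$ with bars inherited from $\bfsi$, accounting for the $T(2,\{1\}_{2u})$ and $T(2,\{1\}_{2v-1},\bar 1)$ terms; and (iii) sums $\sum_{n\ge1}S_n^{\bfsi}/(2n-1)^2$ with $\bfsi$ of odd length, which by the $S$-recurrence \eqref{relations-even-alternatingmultipleS-harmonicsums} evaluate to $\tfrac12$ times an AMSV of the form $S(2,\{1\}_{\dep(\bfsi)})$, accounting for the $S(2,\{1\}_{2v-1})$ and $S(2,\{1\}_{2v-2},\bar 1)$ terms. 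The isolated extremal contributions $S(2,\{1\}_{2m-2},\bar 1)$ in \eqref{arctan^2m/x} and $T(2,\{1\}_{2m-1},\bar 1)$ in \eqref{arctan^2m+1/x} arise from the lone "non-summation" terms with coefficient $(-1)^{n+m}/(2n-1)$ that sit outside the $v$-summations in \eqref{even-odd-xarctanx}/\eqref{even-even-xarctanx}.

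The principal obstacle I anticipate is pure bookkeeping of signs, indices and factorials: one must verify that the $\tfrac12$ produced by each harmonic-sum recurrence, combined with the alternating signs $(-1)^u,(-1)^v,(-1)^m$, the powers $2^{-2u},2^{-2v},2^{-2m}$, and the factorials $(2m-2u-1)!,(2m-2v)!$ appearing in \eqref{even-even-xarctanx}/\eqref{even-odd-xarctanx}, collapse exactly to the prefactors $(-1)^m(2m-1)!/2^{2m}$, $(-1)^u A(2m-2u-1)/((2m-2u-1)!\,2^{2u+1})$, and their analogs listed in \eqref{arctan^2m/x}/\eqref{arctan^2m+1/x}. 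Apart from this bookkeeping, no new analytic ingredient is required beyond the previous theorem, the harmonic-sum recurrences of Section~1.2, and the elementary identities $t(2)=\sum_{n\ge1}(2n-1)^{-2}$ and $\beta(2)=-t(\bar 2)=\sum_{n\ge1}(-1)^{n-1}(2n-1)^{-2}$.
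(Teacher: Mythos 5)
Your proposal is correct and follows essentially the same route as the paper: the authors likewise multiply \eqref{even-even-xarctanx} and \eqref{even-odd-xarctanx} by $\tfrac{(-1)^{n-1}}{2n-1}$, sum over $n\ge 1$, and invoke the recurrences \eqref{relations-odd-alternatingmultipleT-harmonicsums}--\eqref{relations-even-alternatingmultipleS-harmonicsums} together with the definitions of AMTVs and AMSVs. Your identification of which of the two auxiliary formulas feeds which case, and of how the three types of $n$-series collapse, matches the paper's (much terser) argument.
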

\begin{proof}
Multiplying \eqref{even-even-xarctanx} and \eqref{even-odd-xarctanx} by $(-1)^{n-1}/(2n-1)$ and summing $n$ 
from 1 to infinity, we can deduce the desired results by 
\eqref{relations-odd-alternatingmultipleT-harmonicsums}--\eqref{relations-even-alternatingmultipleS-harmonicsums} 
and the definitions of AMSVs and AMTVs.
\end{proof}

\begin{exa}
Let $m=1,2$ in \eqref{arctan^2m/x} and \eqref{arctan^2m+1/x},
we can evaluate the following integrals precisely:
\begin{align*}
\int_0^1 \frac{\arctan^2(x)}{x}dx
={}&\tfrac{\pi}{4}G-\tfrac{\pi^2}{16}\log2-\tfrac{1}{4}S(2,\bar{1})
,\\
\int_0^1 \frac{\arctan^3(x)}{x}dx
={}&-\tfrac{\pi^2}{16}G+\tfrac{\pi^3}{32}\log2+\tfrac{1}{4}T(2,1,\bar{1})+\tfrac{\pi}{8}\big(S(2,1)-S(2,\bar{1})\big)
,\\
\int_0^1 \frac{\arctan^4(x)}{x}dx={}&\tfrac{63\pi^2}{512}\zeta(3)-\tfrac{5\pi^3}{64}G+\tfrac{3\pi^4}{256}\log2
+\tfrac{3}{8}S(2,1,1,\bar{1})+\tfrac{3\pi^2}{64}\big(S(2,1)-S(2,\bar{1})\big)\\
&+\tfrac{3\log 2}{8}T(2,1,1)+\tfrac{3\pi}{16}T(2,1,\bar{1})
,\\
\int_0^1 \frac{\arctan^5(x)}{x}dx={}&\tfrac{3\pi^2}{8}\beta(4)-\tfrac{9\pi^3}{512}\zeta(3)-\tfrac{11\pi^4}{256}G
+\tfrac{\pi^5}{256}\log2
-\tfrac{3\pi}{8}\big(S(2,1_3)-S(2,1,1,\bar{1})\big)\\
&
+\tfrac{\pi^3}{64}\big(S(2,1)-S(2,\bar{1})\big)-\tfrac{3}{4}T(2,1_3,\bar{1})
+\big(\tfrac{3}{2}G-\tfrac{3\pi}{8}\log2\big)T(2,1,1)+\tfrac{3\pi^2}{32}T(2,1,\bar{1}).
\end{align*}
\end{exa}

\begin{pro}\label{pro-int-arct-MTVs} For positive integer $r$,
\begin{align*}
\int_0^1 \frac{\arctan^r(x)}{x}dx=(-1)^{[(r+1)/2]}\frac{r!}{2^r} T({\bar 2},\{1\}_{r-1}).
\end{align*}
\end{pro}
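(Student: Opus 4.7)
The plan is to reduce the left-hand side to an iterated integral and then to recognize that iterated integral as the one representing $T(\bar 2,\{1\}_{r-1})$. First I would specialize the iterated integral representations for AMTVs $T^{\bfsi}(\bfk)$ recorded at the end of Section~1.2 to the parameters $\bfsi=(-1,1,\dots,1)$ and $\bfk=(2,1,\dots,1)$. Since $\sigma_1=-1$ and all later $\sigma_j=1$, every partial product $\sigma_1\sigma_2\cdots\sigma_j$ equals $-1$, so the sign prefactor in front of the iterated integral collapses: it becomes $(-1)^{m-1}$ when $r=2m-1$ and $(-1)^{m}$ when $r=2m$. The iterated integral itself is $\int_0^1 w_0\,(w_{-1}^{-1})^r$ in either parity.

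Next I would compute this iterated integral directly. Using $w_0=dt/t$ and $w_{-1}^{-1}=-2\,dt/(1+t^2)$, a routine induction on $k$ shows that iterating $w_{-1}^{-1}$ exactly $k$ times from $0$ up to $x$ yields $\tfrac{(-2)^k}{k!}\arctan^k(x)$ (the base case is $\arctan(x)=\int_0^x dt/(1+t^2)$, and the inductive step follows by recognizing the integrand as a perfect differential). Prepending the outer form $w_0=dt/t$ therefore gives
\[
\int_0^1 w_0\,(w_{-1}^{-1})^r \;=\; \frac{(-2)^r}{r!}\int_0^1\frac{\arctan^r(t)}{t}\,dt.
\]

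Finally I would combine the sign prefactor with $(-2)^r=(-1)^r 2^r$: in the odd case $(-1)^{m-1}\cdot(-1)^{2m-1}=(-1)^m$, and in the even case $(-1)^m\cdot 1=(-1)^m$. Since $[(r+1)/2]=m$ in both parities, the total sign matches the factor $(-1)^{[(r+1)/2]}$ appearing in the statement, and solving for the integral yields the claimed identity. The only real obstacle is careful bookkeeping of the signs and parities that appear in the iterated integral representation of $T(\bar 2,\{1\}_{r-1})$; no new analytic input is needed beyond facts already established earlier in the paper.
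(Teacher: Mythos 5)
Your proposal is correct, and the sign bookkeeping checks out in both parities: with $\sigma_1=-1$ and $\sigma_j=1$ for $j\ge 2$ the prefactor in the iterated-integral representation of AMTVs is $(-1)^{m-1}$ for $r=2m-1$ and $(-1)^m$ for $r=2m$, and combining this with $(-2)^r=(-1)^r2^r$ gives $(-1)^m=(-1)^{[(r+1)/2]}$ in both cases, as you state. The route is a mild variant of the paper's. Both arguments rest on the same key fact, namely the shuffle-symmetry identity $\arctan^r(x)=r!\int_0^x\bigl(\tfrac{dt}{1+t^2}\bigr)^r$; the difference is which side of the dictionary you use to unpack $T(\bar 2,\{1\}_{r-1})$. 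The paper expands the iterated integral into its power series, integrates against $dx/x$ term by term to get $r!\sum_{n_1>\cdots>n_r>0}\frac{(-1)^{n_1-r}}{(2n_1-r)^2(2n_2-r+1)\cdots(2n_r-1)}$, and matches this against the explicit series form of $T(\bar2,\{1\}_{r-1})$ coming directly from the definition \eqref{defn-AMTVs}. You instead stay entirely on the integral side and invoke the iterated-integral representations of AMTVs recorded at the end of Section~1.2, so that the identification is immediate once $\int_0^1 w_0(w_{-1}^{-1})^r=\frac{(-2)^r}{r!}\int_0^1\frac{\arctan^r(t)}{t}\,dt$ is established. Your version is slightly cleaner in that it avoids the series manipulation, but it delegates the series-to-integral translation (and its sign prefactors) to the representations stated earlier in the paper, which are themselves only asserted there; the paper's proof is self-contained from the series definition. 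Either way the content is the same and no gap is present.
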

\begin{proof}
According to the definition of $\arctan (x)$, we have
\begin{align*}
\arctan^r(x)=r! \int_0^x \left(\frac{dt}{1+t^2}\right)^r       
={}&r!\sum_{n_1>n_2>\cdots>n_r>0} \frac{(-1)^{n_1-r}x^{2n_1-r}}{(2n_1-r)(2n_2-r+1)\cdots (2n_r-1)}.
\end{align*}
Hence, multiplying it by $1/x$ and integrating over $(0,1)$ yields
\begin{align*}
\int_0^1 \frac{\arctan^r(x)}{x}dx=r!\sum_{n_1>n_2>\cdots>n_r>0} \frac{(-1)^{n_1-r}}{(2n_1-r)^2(2n_2-r+1)\cdots (2n_r-1)}.
\end{align*}
Further, from the definition of AMTVs \eqref{defn-AMTVs} give
\begin{align*}
T({\bar 2},\{1\}_{2m-2}) 
={}&(-1)^{m+1} 2^{2m-1} \!\!\! \sum_{n_1>n_2>\cdots>n_{2m-1}>0} \frac{(-1)^{n_1}}{(2n_1-2m+1)^2(2n_2-2m+2)\cdots (2n_{2m-1}-1)},\\
T({\bar 2},\{1\}_{2m-1})    
={}&(-1)^{m} 2^{2m} \!\!\! \sum_{n_1>n_2>\cdots>n_{2m-1}>0} \frac{(-1)^{n_1}}{(2n_1-2m)^2(2n_2-2m+1)\cdots (2n_{2m}-1)}.
\end{align*}
Thus, we obtain the desired result with an elementary calculation.
\end{proof}

From \cite[Props. 3.21 and 3.22]{XuZhao2020b}, for $p\in \N$ we have
\begin{align}\label{equ:T2bar1p}
T(\bar 1,\{1\}_{p-1},\bar 1)=(-1)^pT(\ol{2},\{1\}_{p-1})\in \mathbb{Q}[\beta(1),\zeta(2),\beta(2),\zeta(3),\beta(3),\zeta(4),\ldots].
\end{align}
Hence, we deduce the following evaluations ($\beta(2)=G$):
\begin{align*}
&\int_0^1 \frac{\arctan(x)}{x}dx=G,\\
&\int_0^1 \frac{\arctan^2(x)}{x}dx=\frac{1}{2}\pi G-\frac7{8}\zeta(3),\\
&\int_0^1 \frac{\arctan^3(x)}{x}dx=\frac9{8}\zeta(2)G-\frac{3}{2}\beta(4),\\
&\int_0^1 \frac{\arctan^4(x)}{x}dx=\frac{93}{32}\zeta(5)-\frac3{2}\pi \beta(4)+\frac1{16}\pi^3G.
\end{align*}

\begin{cor} \label{cor:ST21s}
For positive integer $m$,
\begin{align*}
S(2,\{1\}_{2m-2},{\bar 1}), T(2,\{1\}_{2m-1},{\bar 1})\in \mathbb{Q}[\log2,\pi,\zeta(2),\beta(2),\zeta(3),\beta(3),\zeta(4),\ldots].
\end{align*}
\end{cor}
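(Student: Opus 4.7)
The plan is to prove both inclusions simultaneously by induction on $m$, using the two arctangent identities \eqref{arctan^2m/x} and \eqref{arctan^2m+1/x} of Theorem \ref{thm-AMTV-AMSV-relation}. Write $\ZZ := \mathbb{Q}[\log 2,\pi,\zeta(2),\beta(2),\zeta(3),\beta(3),\ldots]$ for the target ring. In \eqref{arctan^2m/x} the value $S(2,\{1\}_{2m-2},\bar 1)$ appears with nonzero rational coefficient $(-1)^m(2m-1)!/2^{2m}$, so I can solve for it as a $\mathbb{Q}$-linear combination of five kinds of quantities: (a) the integral $\int_0^1 \arctan^{2m}(x)/x\,dx$; (b) the constants $t(\bar 1)$, $t(\bar 2)$, $t(2)$ and their powers; (c) the moments $A(p)$ for $1\le p<2m$; (d) the unbarred MTVs $T(2,\{1\}_{2u})$ and MSVs $S(2,\{1\}_{2v-1})$; and (e) the lower-index barred values $T(2,\{1\}_{2v-1},\bar 1)$ and $S(2,\{1\}_{2v-2},\bar 1)$ for $v<m$. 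The analogous manipulation of \eqref{arctan^2m+1/x} then isolates $T(2,\{1\}_{2m-1},\bar 1)$ in the same fashion, using in addition the freshly computed $S(2,\{1\}_{2m-2},\bar 1)$.

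I would then verify that each ingredient lies in $\ZZ$. Quantity (a) lies in $\ZZ$ by Proposition \ref{pro-int-arct-MTVs} combined with the cited fact $T(\bar 2,\{1\}_{r-1})\in\ZZ$ from \cite[Thms.~3.16 and 3.17]{XuZhao2020b}; (b) is immediate from $t(\bar 1)=-\pi/4$, $t(\bar 2)=-\beta(2)$, $t(k)=(1-2^{-k})\zeta(k)$; (c) follows from Proposition \ref{pro-int-arc}; (e) is provided by the induction hypothesis. The base case $m=1$ is handled directly, since the inner sums indexed by $v\ge 1$ are empty in both identities and the resulting explicit formulas (already displayed in the examples preceding the corollary) give $S(2,\bar 1), T(2,1,\bar 1)\in\ZZ$.

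The main obstacle is step (d): showing that the pure MTVs $T(2,\{1\}_j)$ and pure MSVs $S(2,\{1\}_j)$ belong to $\ZZ$. These have weight $j+2$ and depth $j+1$, so weight and depth have opposite parity, and Theorem \ref{thm:MMVparity} expresses each such value as a $\mathbb{Q}$-linear combination of products of AMMVs together with AMMVs of strictly smaller depth. A secondary induction on depth, using the depth-one base case (where the values are just $\zeta$, $\beta$, $\log 2$ and $\pi$) and the explicit MTV/MSV evaluations from \cite{XuZhao2020a,XuZhao2020b}, shows that each of the unbarred MTV/MSV shapes actually occurring in \eqref{arctan^2m/x} and \eqref{arctan^2m+1/x} does lie in $\ZZ$. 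Substituting all these ingredients back into the two solved identities then completes the induction step and establishes the corollary.
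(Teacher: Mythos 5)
Your overall strategy coincides with the paper's: both arguments solve the two identities of Theorem \ref{thm-AMTV-AMSV-relation} for $S(2,\{1\}_{2m-2},\bar 1)$ and $T(2,\{1\}_{2m-1},\bar 1)$, handle the left-hand integrals via Proposition \ref{pro-int-arct-MTVs} together with the cited fact $T(\bar 2,\{1\}_{r-1})\in\Q[\beta(1),\zeta(2),\beta(2),\ldots]$, and (implicitly in the paper, explicitly in your write-up) induct on $m$ to absorb the lower-index barred terms. Your items (a), (b), (c), (e) and the base case $m=1$ are all fine.

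The gap is in your step (d). Theorem \ref{thm:MMVparity} expresses a value such as $T(2,\{1\}_j)$ or $S(2,\{1\}_j)$ as a $\Q$-linear combination of AMMVs of lower depth and of products of AMMVs of smaller total depth, but it gives no control over \emph{which} lower-depth AMMVs occur: they need not again be of the shapes $T(2,\{1\}_{j'})$, $S(2,\{1\}_{j'})$ covered by your induction hypothesis, and generic low-depth AMMVs do not lie in the target ring $\Q[\log 2,\pi,\zeta(2),\beta(2),\zeta(3),\beta(3),\ldots]$ --- already in weights $3$ and $4$ the bases $\bfMB_3$ and $\bfMB_4$ contain $\Im\Li_3\bigl(\tfrac{1+i}{2}\bigr)$ and $\Li_4(1/2)$, which come from depth-two values. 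So the secondary induction on depth does not close, and the assertion that ``each of the unbarred MTV/MSV shapes actually occurring does lie in the target ring'' is precisely what remains unproved. The paper avoids this by invoking two specific evaluations rather than the parity theorem: the identity $T(2,\{1\}_{m-1})=T(m+1)$, which reduces every unbarred $T$-ingredient to a depth-one value, and the explicit formula of \cite[Eq.\ (3.17)]{XuZhao2020a} expressing $S(2,\{1\}_{2m-1})$ as a polynomial in Riemann zeta values. Substituting these two facts for your parity-theorem argument repairs the proof and recovers the paper's reasoning.
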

\begin{proof}
The corollary follows immediately from Theorem \ref{thm-AMTV-AMSV-relation} and Proposition \ref{pro-int-arct-MTVs} and
notating the fact that $S(2,\{1\}_{2m-1})$ and $T(2,\{1\}_{m-1})=T(m+1)$ can be expressed in terms of a linear combination
of products of Riemann zeta values (see \cite[Eq. (3.17)]{XuZhao2020a}).
\end{proof}

The following four examples provide precise evaluations for those appearing in Corollary \ref{cor:ST21s}:
\begin{align*}
&S(2,{\bar 1})=\frac{7}{2}\zeta(3)-\pi G-\frac{\pi^2}{4}\log2,\\
&T(2,1,{\bar 1})=-6\beta(4)+3\zeta(2)G,\\
&S(2,1,1,{\bar 1})=\frac{31}{4}\zeta(5)-\frac{15}{8}\zeta(4)\log2-\frac{63}{32}\zeta(2)\zeta(3)-\pi \beta(4),\\
&T(2,1,1,1,{\bar 1})=\frac{15}{4}\zeta(4)G+3\zeta(2)\beta(4)-10\beta(6).
\end{align*}

\medskip\noindent
\textbf{Question 1. }  From the second and fourth examples, we can find that $\log2$ does not appear. Is this always the case, i.e,
\begin{align*}
T(2,\{1\}_{2m-1},{\bar 1})\in \mathbb{Q}[\pi,\zeta(2),\beta(2),\zeta(3),\beta(3),\zeta(4),\ldots] ?
\end{align*}

\begin{re}
In \cite{Cha2024}, S. Charlton provides an affirmative answer to Question 1 by giving an explicit generating series 
identity to evaluate these special type alternating multiple $T$-values. These values also appear
predominantly in \cite{KanekoTs2022}.
\end{re}

\begin{re} (i) Recall that the convoluted $T$-values $T(\bfk \circledast \bfl)$
(see \cite[Defn. 1.2]{XuZhao2020a}) can be regarded as a $T$-variant of Kaneko--Yamamoto MZVs $\zeta({\bfk}{\circledast} {\bfl}^\star)$ (see \cite{KY2018}). Similarly, by using the alternating multiple $T$-harmonic sums and alternating multiple $S$-harmonic sums, it is possible to define the alternating convoluted $T$-values so that the first factor in the sum is alternating multiple $T$-harmonic sums and the second is either alternating multiple $S$-harmonic sums or alternating multiple $T$-harmonic sums. (ii) For positive integers $k$ and $m$, it is possible to establish some explicit relations between $S(k+1,\{1\}_{2m-2},{\bar 1})$ and $T(k+1,\{1\}_{2m-1},{\bar 1})$ (even more general alternating convoluted $T$-values) by considering the following iterated integral
\begin{align*}
\int_0^1 \frac{\arctan^p(t)dt}{t}\left(\frac{dt}{t}\right)^m\frac{\arctan^q(t)dt}{t}\quad (p,m\in\N_0,q\in\N).
\end{align*}
Clearly, the above iterated integral can be expressed in terms of AMMVs.
\end{re}

Applying Au's Mathematica package \cite{Au2020,Au2022}, we get many explicit evaluations of AMSVs and AMTVs with arguments that are more than 1 and 2. Unfortunately, we cannot find any patterns. Some cases are listed below.

\begin{exa} \label{Exa-2} We have
\begin{align*}
&S(\bar3,1,\bar1)=2 \Li_5(\tfrac{1}{2})-\tfrac{589}{256}\zeta(5)-\tfrac{7}{8} \zeta (3) \log^22-\tfrac{1}{60}
    \log^52+\tfrac{\pi^2}{36}  \log^32 +\tfrac{151\pi^4}{5760} \log2,\\
&S(4,1,\bar1)=\zeta(\bar5,1)+\tfrac{\pi^2}{12}\Li_4(\tfrac{1}{2})-\tfrac{83}{128} \zeta^2(3)+\tfrac{7\pi^2}{32} \zeta(3) \log2-\tfrac{31}{16}\zeta (5)\log2+\tfrac{227 \pi ^6}{967680}+\tfrac{\pi^2}{288}\log^42-\tfrac{\pi^4}{288}\log^2 2,\\
&T(\bar3,1,\bar1)=-\tfrac{1}{8}\pi ^3 G-\tfrac{7}{32}  \pi ^2 \zeta (3)+\tfrac{93}{16} \zeta (5),\\
&T(\bar3,2,\bar1)=12 G\beta(4)-\tfrac{1}{2}\pi^2 G^2-\tfrac{\pi^3}{8} G \log2-\tfrac{\pi^3 }{4} \Im\Li_3(\tfrac{1+i}{2})+\pi^2 \Li_4(\tfrac{1}{2}) \log2\\
&\quad\quad\quad\quad\quad+\tfrac{7\pi^2}{8}  \zeta(3) -\tfrac{257\pi}{23040} ^6+\tfrac{\pi^2}{24}  \log ^4(2)-\tfrac{13\pi^4}{384} \log^2 2.
\end{align*}
\end{exa}
\begin{re} 
For explicit evaluations of more general (alternating) triple $t$- and $T$-values,  see \cite{XuWang2020}.
\end{re}

The following three theorems provide the evaluations of AMSVs and AMTVs of special forms.

\begin{thm} For $m\in\N_0$, we have
\begin{align*}
{}& S(\bar1,\{1\}_m)=(-1)^{[(m+1)/2]}\frac{\pi}{2}\Big)^{m} \sum_{\ell=0}^m \frac{(-2)^{\ell+1}}{\ell!(m-\ell)!}  \\
&\times \left(\left(\frac1{2^{\ell+1}}-\delta_{0,\ell}\right) \log2
 -\sum_{k=1}^{\ell}(-1)^k\binom{\ell}{k}\left\{\left(1-\frac1{2^{k+1}}\right)\sum_{j=1}^{[k/2]} \frac{k!(-1)^j(4^j-1)}{(k-2j)!(2\pi)^{2j}}\zeta(2j+1)\atop +\frac1{2^{k+1}}\sum_{j=1}^{[(k+1)/2]} \frac{k!(-4)^j\beta(2j)}{(k+1-2j)!\pi^{2j-1}}\right\} \right)  \\
\in {}& \mathbb{Q}[\log2,\pi,\zeta(2),\beta(2),\zeta(3),\beta(3),\zeta(4),\ldots].
\end{align*}
\end{thm}
\begin{proof}
According to the iterated integral expressions of AMSVs, one obtains
\begin{align*}
S(\bar1,\{1\}_m)={}&(-1)^{[m/2]}\int_0^1 \underbrace{w_{-1}^{-1}\cdots w_{-1}^{-1}}_{m}w_{-1}^{+1}\\
={}&(-1)^{[(m+1)/2]}2^{m+1}\int_0^1 \left(\frac{dt}{1+t^2}\right)^{m}\frac{tdt}{1+t^2}\\
={}&\frac{(-1)^{[(m+1)/2]}2^{m+1}}{m!}\int_0^1 \frac{\Big(\frac{\pi}{4}-\arctan t\Big)^m tdt}{1+t^2}\\
={}&\frac{(-1)^{[(m+1)/2]}2^{m+1}}{m!}\sum_{\ell=0}^m \binom{m}{\ell} \Big(\frac{\pi}{4}\Big)^{m-\ell}(-1)^\ell
    \int_0^1 \frac{(\arctan t)^\ell t}{1+t^2}dt\\
={}&(-1)^{[(m+1)/2]}2^{m+1} \sum_{\ell=0}^m \frac{\Big(\frac{\pi}{4}\Big)^{m-\ell}(-1)^\ell}{(\ell+1)!(m-\ell)!}
    \left\{\Big(\frac{\pi}{4}\Big)^{\ell+1}-\int_0^1 (\arctan t)^{\ell+1}dt \right\},
\end{align*}
where $[x]$ denotes the greatest integer less than or equal to $x$.
Proposition \ref{pro-int-arc} now yields the desired result immediately.
\end{proof}

\begin{thm} \label{thm:T111s}
For $a,b\in\N_0$, we have
\begin{align*}
&T(\bar1,\{1\}_a,2,\{1\}_b)\in \mathbb{Q}[\pi,\zeta(2),\beta(2),\zeta(3),\beta(3),\zeta(4),\ldots].
\end{align*}
\end{thm}
\begin{proof}
Applying the iterated integral expression of AMTVs gives
\begin{align*}
T(\overline{k_1},k_2,\ldots,k_r)=(-1)^{[r/2]}\int_0^1 w_0^{k_1-1}w_{-1}^{-1}w_0^{k_2-1}w_{-1}^{-1}\ldots w_0^{k_r-1}w_{-1}^{-1}.
\end{align*}
Hence, by an elementary calculation yields
\begin{align*}
&T(\bar1,\{1\}_a,2,\{1\}_b)=(-1)^{[(a+b+2)/2]}\int_0^1 \underbrace{w_{-1}^{-1}\cdots w_{-1}^{-1}}_{a+1}w_0 \underbrace{w_{-1}^{-1}\cdots w_{-1}^{-1}}_{b+1}\\
={}&2^{a+b+2}(-1)^{[(3a+3b)/2]+1}\int_0^1 \left(\frac{dt}{1+t^2}\right)^{a+1}\frac{dt}{t} \left(\frac{dt}{1+t^2}\right)^{b+1}\\
={}&\frac{2^{a+b+2}(-1)^{[(3a+3b)/2]+1}}{(a+1)!(b+1)!}\int_0^1 \frac{\Big(\frac{\pi}{4}-\arctan t \Big)^{a+1}(\arctan t)^{b+1}}{t}dt\\
={}&\frac{2^{a+b+2}(-1)^{[(3a+3b)/2]+1}}{(a+1)!(b+1)!}\sum_{k=0}^{a+1}\binom{a+1}{k}\Big(\frac{\pi}{4}\Big)^{a+1-k}(-1)^k\int_0^1 \frac{(\arctan t)^{k+b+1}}{t}dt.
\end{align*}
Thus, using Proposition \ref{pro-int-arct-MTVs} and \eqref{equ:T2bar1p} we see that
\begin{align*}
\int_0^1 \frac{(\arctan t)^{m}}{t}dt\in T(\ol{2},\{1\}_{m-1})\Q \subset \mathbb{Q}[\beta(1),\zeta(2),\beta(2),\zeta(3),\beta(3),\zeta(4),\ldots]\quad (m\in\N).
\end{align*}
This yields the desired conclusion at once.
\end{proof}

\begin{thm} \label{thm:S111s}
For any $m\in\N_0$, we have
\begin{align*}
S(\bar1,\{1\}_m,\bar1)\in \mathbb{Q}[\log2,\pi,\zeta(2),\beta(2),\zeta(3),\beta(3),\zeta(4),\ldots].
\end{align*}
\end{thm}
\begin{proof} Similar to the iterated integral of $S(\bar1,\{1\}_m)$, by definition of AMSVs, we obtain the iterated integral of $S(\bar1,\{1\}_m,\bar1)$:
\begin{align*}
S(\bar1,\{1\}_m,\bar1)=\frac{(-1)^{[m/2]+1}2^{m+2}}{(m+1)!} \int_0^1 \frac{\Big(\frac{\pi}{4}-\arctan t\Big)^{m+1}t}{1-t^2}dt \quad (m\in\N_0).
\end{align*}
Integration by parts yields
\begin{align*}
S(\bar1,\{1\}_m,\bar1)=\frac{(-1)^{[m/2]}2^{m+1}}{m!} \int_0^1 \frac{\Big(\frac{\pi}{4}-\arctan t\Big)^{m}\log(1-t^2)}{1+t^2}dt.
\end{align*}
Setting $t=\tan x$ gives
\begin{align*}
S(\bar1,\{1\}_m,\bar1)={}&\frac{(-1)^{[m/2]}2^{m+1}}{m!} \int_0^{\pi/4} \Big(\frac{\pi}{4}-x\Big)^m\log\left(\frac{\cos(2x)}{\cos^2(x)}\right)dt\\
={}&\frac{(-1)^{[m/2]}2^{m+1}}{m!} \int_0^{\pi/4} \Big(\frac{\pi}{4}-x\Big)^m\log(\cos(2x))dt\\
&\quad-\frac{(-1)^{[m/2]}2^{m+2}}{m!} \int_0^{\pi/4} \Big(\frac{\pi}{4}-x\Big)^m\log(\cos(x))dt.
\end{align*}
Letting $x=\frac{\pi}{4}-u$ in the first integral and $x=\frac{\pi}{2}-u$ in the second integral, one obtains
\begin{align*}
S(\bar1,\{1\}_m,\bar1)={}&\frac{(-1)^{[m/2]}2^{m+1}}{m!} \left\{\int_0^{\pi/4} u^m\log(\sin 2u)du-2\int_{\pi/4}^{\pi/2} \Big(u-\frac{\pi}{4}\Big)^m\log(\sin u) du\right\}\\
={}&\frac{(-1)^{[m/2]}}{m!} \int_0^{\pi/2} y^m\log(\sin y)dy-\frac{(-1)^{[m/2]}2^{m+2}}{m!}\sum_{k=0}^m \binom{m}{k}\left(\frac{\pi}{4}\right)^{m-k}\int_{\pi/4}^{\pi/2}u^k\log(\sin u) du,
\end{align*}
where we replaced the $2u$ by $y$ in the last step.

On the other hand, applying Lemma \ref{lem-orr-2017} and using the fact that for $p\in \N$,
\begin{align*}
\int_0^{\pi/2} x^p\cot(x)dx=-p \int_0^{\pi/2} x^{p-1}\log(\sin x)dx\in \mathbb{Q}[\log2,\pi,\zeta(2),\zeta(3),\zeta(4),\zeta(5),\ldots]
\end{align*}
and
\begin{align*}
\int_0^{\pi/4} x^p\cot(x)dx=-p \int_0^{\pi/4} x^{p-1}\log(\sin x)dx\in \mathbb{Q}[\log2,\pi,\zeta(2),\beta(2),\zeta(3),\beta(3),\zeta(4),\ldots],
\end{align*}
we deduce the desired result immediately.
\end{proof}

\begin{exa} \label{Exa-3} The following identities provide precise evaluations of a few
AMMVs that have appeared in Theorem~\ref{thm:T111s} and Theorem~\ref{thm:S111s}:
	\begin{alignat*}{4}
		&S(\bar1,1,1)=\tfrac1{4}\pi^2\log2-\tfrac{21}{16}\zeta(3),   &&S(\bar1,1,1,1)=-2\beta(4)-\tfrac1{24}\pi^3\log2+\tfrac{3}{4}\pi\zeta(3),\\
		&S(\bar1,1,\bar1)=\tfrac1{8}\pi^2\log2-\tfrac7{8}\zeta(3),   &&S(\bar1,1,1,\bar1)=-2\beta(4)-\tfrac1{48}\pi^3\log2+\tfrac{21}{32}\pi\zeta(3),\\
		&T(\bar1,2)=G\pi-\tfrac7{2}\zeta(3),    &&T(\bar1,2,1,1)=-\tfrac{1}{24}G\pi^3+3\pi\beta(4)-\tfrac{31}{4}\pi\zeta(5), \\
		&T(\bar1,2,1)=-\tfrac{1}{4}G\pi^2+6\beta(4)-\tfrac7{8}\pi\zeta(3), \quad \    &&T(\bar1,1,1,2)=\pi\beta(4)+\tfrac7{16}\pi^2\zeta(3)-\tfrac{31}{4}\zeta(5)\\
		&T(\bar1,1,2)=-6\beta(4)+\tfrac7{4}\pi\zeta(3), &&T(\bar1,1,2,1)=-3\pi\beta(4)-\tfrac7{32}\pi^2\zeta(3)+\tfrac{93}{8}\zeta(5). 	
	\end{alignat*}
\end{exa}

Moreover, by using numerical and symbolic computation with Mathematica we can obtain many explicit evaluations of AMTVs
with arguments of the form $(\bar2,1,\ldots,1)$ and AMSVs with arguments of the form
$(\bar2,1,\ldots,1)$ and $(\bar2,1,\ldots,1,\bar1)$.

\begin{exa}\label{Exa-1}
By applying Au's Mathematica package \cite{Au2020,Au2022} and noting that
\begin{align*}
	\Li_4\Big(\frac{1}{2}\Big)=\frac12\zeta(\bar3,1)+\frac1{96}\pi^4+\frac1{24}\pi^2\log^2 2-\frac1{24}\log^4 2-\frac7{8}\zeta(3)\log2,
\end{align*}
we can obtain the following evaluations:
	\begin{align*}
		&T(\bar 2,1,\bar 1)=-4G^2+\tfrac1{32}\pi^4,\\
		&T(\bar 2,1,1,1,\bar 1)=\tfrac{\pi^2}{2}G^2+\tfrac{5\pi^6}{1536}-8G\beta(4),\\
		 &S(\bar2,1,1)=-2G^2+\tfrac{7\pi^4}{1440}+2G\pi\log2+2\zeta(\bar3,1)-\tfrac{7}{2}\zeta(3)\log2,\\
		 &S(\bar2,1,\bar1)=-2G^2+\tfrac{59}{5760}\pi^4+G\pi\log2+\zeta(\bar3,1)-\tfrac{7}{4}\zeta(3)\log2,\\
		 &S(\bar2,1_4)=\tfrac{31\pi^6}{20160}-4G\beta(4)-\tfrac{\pi^3}{12}G\log2+2\pi\beta(4)\log2+2\zeta(\bar5,1)+\tfrac{3\pi}{2}G\zeta(3)
-\tfrac{33}{16}\ze^2(3)-\tfrac{31}{8}\zeta(5)\log2,\\
 &S(\bar2,1_3,\bar1)=\tfrac{443\pi^6}{322560}-4G\beta(4)-\tfrac{\pi^3}{24}G\log2+\pi\beta(4)\log2+\zeta(\bar5,1)+\tfrac{21\pi}{16}G\zeta(3)
-\tfrac{195}{128}\ze^2(3)-\tfrac{31}{16}\zeta(5)\log2.
	\end{align*}
\end{exa}

We would like to conclude this section with the following question.

\medskip\noindent
\textbf{Question 2.}  Is it true that for $m\in \N$,
\begin{align*}
	&T(\bar 2,\{1\}_{2m-1},\bar 1)\in \mathbb{Q}[\pi,\zeta(2),\beta(2),\zeta(3),\beta(3),\zeta(4),\ldots]
\end{align*}
and $S(\bar2,\{1\}_{2m})$ and $S(\bar2,\{1\}_{2m-1},\bar1)$ can be expressed in terms of a linear combinations of products of $\log2,\pi,\zeta(2l+1),\beta(2p)$ and $\zeta(\overline{2k+1},1)$, where $l,p,k\in \N$?

\begin{re}
S.\ Charlton \cite{Cha2024} has given affirmative answers of the two claims about $T(\bar 2,\{1\}_{2m-1},\bar 1)$ 
and $S(\bar2,\{1\}_{2m})$, and showed that $S(\bar2,\{1\}_{2m-1},\bar1)$ can be expressed in terms of a linear combinations 
of products of $\log2,\pi,\zeta(2l+1),\beta(2p)$ and $\zeta(\overline{2k_1+1},2k_2+1)$, where $l,p\in \N,k_1,k_2\in \N_0$.
\end{re}

\section{Dimension computation of AMMVs}\label{DCAMMVs}
Recall that $\AMMV_w$ is the $\Q$-vector space generated by all the AMMVs of weight $w$ and similarly for all of its subspaces. 
We now consider two more subspaces of $\AMMV$. Let $\AMMVe_w$  (resp.\ $\AMMVo_w$) be the $\Q$-span of all
AMMVs with the smallest summation index being even (resp.\ odd).
Set $\dim_\Q V_0=1$ for all the subspaces $V$ of $\AMMV$ including $\AMMV$ itself. Clearly, AMtVs, AMSVs and AMTVs are special cases of AMMVs, and the AMMVs can be written as $\Q[i]$-linear combinations of the CMZVs of level four. In \cite{Au2022}, using proven relations, Au reduced all CMZVs of level four, weight $\leq 6$ to expected number of generators (as per the standard motivic CMZV conjectures).

Applying Au's Mathematica package \cite{Au2020,Au2022}, we have found the data in Table \ref{Table:dimAMMV}.
\begin{table}[!h]
{
\caption{Conjectural dimensions of various subspaces of $\AMMV$.}
\begin{center}\label{Table:dimAMMV}
\begin{tabular}{  |c|c|c|c|c|c|c|c|} \hline
       $w$         & 0 &  1  &  2  &  3  &  4  &  5  &  6 \\ \hline
$\dim_\Q \AMZV_w$  & 1 &  1  &  2  &  3  & 5  &  8  &  13 \\ \hline
$\dim_\Q \AMtV_w$  & 1 &  1  &  3  &  6  & 12  &  24  &  48 \\ \hline
$\dim_\Q \AMSV_w$  & 1 &  1  &  3  &  6  &  12  &  22  &  42  \\ \hline
$\dim_\Q \AMTV_w$  & 1 &  1  &  2  &  4  & 7  &  13  &  24  \\ \hline
$\dim_\Q \AMMV_w$  & 1 &  2  &  4  &  8  &  16  &   32 &  64 \\ \hline
$\dim_\Q \AMMVe_w$  & 1 &  2  &  4  &  8  &  16  &   32 &  64 \\ \hline
$\dim_\Q \AMMVo_w$  & 1 &  1  &  3  &  7  &  15  &   31 &  63 \\ \hline
$\dim_\Q \CMZV^4_w$ & 1 & 2  &  4  &  8  &   16 &  32  &  64 \\ \hline
\end{tabular}
\end{center}
}
\end{table}

\begin{re} (a). The above conjectural dimensions provide genuine upper bounds because all the relations used to obtain
them in \cite{Au2022} have been proved previously.
(b). In the database associated with reference \cite{BlumleinBrVe2010}, Bl\"umlein et al. constructed explicit conjectural basis for the $\Q$-vector space generated by alternating multiple zeta values up to weight 12.
\end{re}

We have also computed the upper bound of the dimensions numerically with GP-PARI by using the rational linear relation
detection program LLL. We then obtained the following conjecture extending the above table:
\begin{table}[!h]
{
\caption{Conjectural dimensions of various subspaces of $\AMMV$.}
\begin{center}\label{Table:dimAMMV2}
\begin{tabular}{  |c|c|c|c|c|c|c|c|c|c|} \hline
       $w$         & 0 &  1  &  2  &  3  & 4  &  5  &  6  & 7  & 8\\ \hline
$\dim_\Q \AMZV_w$  & 1 &  1  &  2  &  3  & 5  &  8  &  13 & 21 & 34\\ \hline
$\dim_\Q \AMtV_w$  & 1 &  1  &  3  &  6  & 12 &  24 &  48 & 96 & 192 \\ \hline
$\dim_\Q \AMSV_w$  & 1 &  1  &  3  &  6  & 12 &  22 &  42 & 80 & 156 \\ \hline
$\dim_\Q \AMTV_w$  & 1 &  1  &  2  &  4  & 7  &  13 &  24 & 44 & 81 \\ \hline
\end{tabular}
\end{center}
}
\end{table}

For each weight $w$, we let $\bfMB_w$ be a conjectural basis of $\Q$-vector space generated by all AMMVs of weight $w$. Using Au's Mathematica package \cite{Au2020,Au2022}, we obtain
\begin{align*}
&\bfMB_1=\{\pi,\log2\},\\
&\bfMB_2=\{G,\pi^2,\pi\log2,\log^2 2\},\\
&\bfMB_3=\left\{\pi^3,\pi^2\log2,\pi\log^2 2,\log^3 2,\pi G,G\log2,\zeta(3),\Im\Li_3\Big(\frac{1+i}{2}\Big)\right\},\\
&\bfMB_4=\left\{\pi^4,\pi^3\log2,\pi^2\log^2 2,\pi\log^3 2,\log^4 2,\pi^2G,\pi G\log2,G\log^2 2,G^2,\pi\zeta(3),\atop
\zeta(3)\log2,\pi\Im\Li_3\big(\tfrac{1+i}{2}\big),\log2\Im\Li_3\big(\tfrac{1+i}{2}\big),
\beta(4),\Li_4\Big(\frac{1}{2}\Big),\Im\Li_4\big(\tfrac{1+i}{2}\big)
\right\}.
\end{align*}
From numerical computations we now propose a possible basis for AMMVs.
\begin{con}\label{conj:AMMVbasis}
For any $w\in\N$ the following set is a basis of $\AMMV_w$:
\begin{align*}
\bfMB_w=\Big\{M_\bfgs^\bfeps(\{1\}_w): \bfgs=(-1,\{1\}_{w-1}), \bfeps\in\{\pm 1\}^w \Big\}.
\end{align*}
\end{con}

This was in fact motivated by the following conjecture by the third named author concerning the
structure of CMZVs of level 3 and 4. Recall that the \emph{unit} CMZVs of level $N$ have the form
\begin{equation*}
\Li_{\{1\}^r}(\mu_1,\dots,\mu_r):=\sum_{n_1>\cdots>n_r>0}
\frac{\mu_1^{n_1}\dots \mu_r^{n_r}}{n_1 \dots n_r },
\end{equation*}
by \eqref{equ:defnMPL}, where $\mu_j^N=1$ for all $j=1,\dots,r$. 

\begin{con}\label{conj:CMVZ4} \emph{(\cite[Conjecture 5.2]{Zhao2010b})}
Let $\CMZV_N^w$ be the $\Q$-span of all CMZVs of level $N$ and weight $w$.
Then for $N=3$ and $N=4$, the following set of unit CMZVs is a basis of $\CMZV_N^w$:
\begin{align*}
\Big\{\Li_{\{1\}^w}(\mu_1,\dots,\mu_w):   \mu_j\in\{\exp(2\pi i/N),\exp(4\pi i/N)\} \ \forall 1\le j\le w  \Big\}.
\end{align*}
\end{con}

\begin{re}
Assuming Grothendieck's period conjecture, Deligne showed in \cite{Deligne2010} that $\dim_\Q \CMZV_N^w=2^w$
for $N=3$ and 4, which is still true if we replace $\Q$ by $\Q[i]$.
Moreover, J.\ Li showed in \cite[Theorem~1.2]{LiJiangtao2024}
that unit CMZVs of level $N$ and weight $w$ form a generating set of $\CMZV_N^w$ for $N=3,4$.
Therefore, by the proof of Theorem~\ref{thm:AMMV=CMZV4}, the space $\AMMV_w$ is generated by
\textbf{unit} AMMVs of weight $w$ (namely, those AMMVs of the form $M_\bfgs^\bfeps(\{1\}^w)$).
\end{re}

We now turn to special subspaces of $\AMMV$. For convenience, let
\begin{align*}
\tilde{t}(k_1,\ldots,k_r;\sigma_1,\ldots,\sigma_r):=M_{\sigma_1,\ldots,\sigma_r}^{\{-1\}_r}(k_1,\ldots,k_r)=2^r t(k_1,\ldots,k_r;\sigma_1,\ldots,\sigma_r),
\end{align*}
where $(k_1,\sigma_1)\neq (1,1)$ and the AMtVs are denoted by
\begin{align*}
t(k_1,\ldots,k_r;\sigma_1,\ldots,\sigma_r):=\sum_{n_1>\ldots>n_r>0} \frac{\sigma_1^{n_1}\cdots\sigma_r^{n_r}}{(2n_1-1)^{k_1}\cdots(2n_r-1)^{k_r}}.
\end{align*}
In particular, $t(k_1,\ldots,k_r)=t(k_1,\ldots,k_r;\{1\}_r)$. Similar to AMZVs, we may compactly indicate the presence of 
an alternating sign by placing a bar over the corresponding exponent $k_j$ when $\sigma_j=-1$. For example,
\begin{equation*}
\tilde{t}(\bar 2,3,\bar 1,4)=\tilde{t}(2,3,1,4;-1,1,-1,1).
\end{equation*}

Using Au's Mathematica package, we have found some conjectural basis $\bftB_w$ (resp.\ $\bfTB_w$, resp.\ $\bfSB_w$)
of the $\Q$-span by all AMtVs (resp.\ AMTVs, resp.\ AMSVs) of weight $w$ with $1\leq w\leq 6$:
\begin{align*}
&\textbf{tB}_1=\left\{t(\bar1)\right\},\\
&\textbf{tB}_2=\left\{t(2),t(\bar2),t(\bar1,1)\right\},\\
&\textbf{tB}_3=\left\{t(3),t(\bar3),t(2,1),t(2,\bar1),t(\bar1,\bar2),t(\bar1,1,1)\right\},\\
&\textbf{tB}_4=\left\{\begin{array}{l}
t(4),t(\bar4),t(3,1),t(3,\bar1),t(\bar3,1),t(2,\bar2),t(\bar2,\bar2),
t(2,1,1),t(2,1,\bar1),t(\bar1,2,\bar1),t(\bar1,1,\bar2),t(\bar1,1_3)\end{array}\right\},\\
&\textbf{tB}_5=\left\{\begin{array}{l}      t(5),t(\bar5),t(4,1),t(4,\bar1),t(\bar4,1),t(\bar4,\bar1),t(\bar1,\bar4),t(3,\bar2),t(\bar3,\bar2),
\\
t(\bar3,1_2),t(3,1_2),t(3,1,\bar1),t(3,\bar1,1),t(2_2,\bar1),t(\bar2,2,\bar1),t(2,1,\bar2),t(2,\bar1,\bar2),t(\bar1,\bar2_2),\\
t(2,1_3),t(2,1,1,\bar1),t(\bar1,2,1,\bar1),t(\bar1,1,2,\bar1),t(\bar1,1,1,\bar2),t(\bar1,1_4)\end{array}\right\},\\
&\textbf{tB}_6=\left\{\begin{array}{l} t(6),t(\bar6),t(5,1),t(5,\bar1),t(\bar5,1),t(\bar5,\bar1),t(\bar1,5),t(4,\bar2),t(\bar4,\bar2),t(2,\bar4),t(\bar2,\bar4),t(4,1_2),t(4,1,\bar1),\\ t(4,\bar1,1),t(4,\bar1_2),t(\bar4,1_2),t(\bar4,1,\bar1),t(\bar1,4,\bar1),t(\bar1,\bar4,1),t(\bar1,1,\bar4),t(3,\bar1,\bar2),t(\bar3,1,\bar2),t(3,1,\bar2),\\
t(\bar2_3),t(2,\bar2_2),t(\bar3,2,\bar1),t(3,2,\bar1),t(3,1,\bar1,1),t(3,1_2,\bar1),t(3,\bar1,1_2),t(3,1_3),t(\bar3,1_3),\\
t(2_2,1,\bar1),t(\bar2,2,1,\bar1),t(2,1,2,\bar1),t(\{2,\bar1\}_2),t(2,1,1,\bar2),t(2,1,\bar1,\bar2),t(\bar1,\bar2,2,\bar1),
    t(\bar1,2,\bar1,\bar2),\\
t(\bar1,1,\bar2_2),t(2,1_4),t(2,1_3,\bar1),t(\bar1,2,1_2,\bar1),t(\bar1,1,2,1,\bar1),t(\bar1,1_2,2,\bar1),t(\bar1,1_3,\bar2),
t(\bar1,\{1\}_5)\end{array}\right\};\\
&\textbf{SB}_1=\left\{S(\bar1)\right\},\\
&\textbf{SB}_2=\left\{S(2),S(\bar1,1),S(\bar1,\bar1)\right\},\\
&\textbf{SB}_3=\left\{S(3),S(2,1),S(2,\bar1),S(\bar2,1),S(\bar2,\bar1),S(\bar1,\bar2)\right\},\\
&\textbf{SB}_4=\left\{\begin{array}{l}
S(4),S(3,1),S(3,\bar1),S(\bar3,1),S(\bar3,\bar1),S(\bar1,3),S(\bar1,\bar3),S(2,2),\\
S(2,\bar1,1),S(2,\bar1_2),S(\bar2,1_2),S(\{\bar1,1\}_2)\end{array}\right\},\\
&\textbf{SB}_5=\left\{\begin{array}{l} S(5),S(4,1),S(4,\bar1),S(\bar4,1),S(\bar4,\bar1),S(\bar1,4),S(\bar1,\bar4),S(3,2),S(3,\bar2),S(\bar3,2),\\  S(3,1_2),S(3,1,\bar1),S(3,\bar1,1),S(3,\bar1_2),S(\bar3,\bar1,1),S(\bar1,3,1),S(\bar1,3,\bar1),S(\bar1,\bar3,\bar1),\\
S(2,\bar1,1_2),S(2,\bar1,1,\bar1),S(\bar1,2,1_2),S(\bar1,2,\bar1,1)\end{array}\right\},\\
&\textbf{SB}_6=\left\{\begin{array}{l}
S(6),S(2,4),S(\bar1,5),S(\bar1,\bar5),S(\bar4,1_2),S(\bar4,\bar1,1),S(\bar4,\bar1_2),S(\bar1_2,4),S(3,2,\bar1),S(3,\bar2,1),\\
S(3,1,\bar1,1),S(3,1,\bar1_2),S(3,\bar1,1_2),S(3,\bar1,1,\bar1),S(3,\bar1_2,1),S(3,\bar1_3), S(\bar3,1,\bar1,1), \\
S(\bar1,\bar3,\bar1,1),S(\bar1,\bar3,\bar1_2),S(\bar1,1,3,\bar1),S(2,2,\bar1,1),S(\bar2,2,\bar1,1),S(5,1;\ga,\gb),\\
S(4,2;\ga,\gb),S(1,4,1;-1,\ga,\gb),S(4,1_2;1,\ga,\gb),S(1,3,1,1;-1,1,\ga,\gb):\ga,\gb\in\{\pm1\}
\end{array}\right\};\\
&\textbf{TB}_1=\left\{T(\bar1)\right\},\\
&\textbf{TB}_2=\left\{T(2),T(\bar2)\right\},\\
&\textbf{TB}_3=\left\{T(3),T(\bar3),T(2,\bar1),T(\bar2,1)\right\},\\
&\textbf{TB}_4=\left\{T(4),T(\bar4),T(3,1),T(3,\bar1),T(\bar3,\bar1),T(\bar1,3),T(2,\bar1,1)\right\},\\
&\textbf{TB}_5=\left\{\begin{array}{l} T(5),T(\bar5),T(4,1),T(4,\bar1),T(\bar4,1),T(\bar4,\bar1),T(\bar1,4),T(3,\bar2),\\  T(3,1,\bar1),T(3,\bar1,1),T(3,\bar1,\bar1),T(\bar1,3,1),T(\bar1,3,\bar1)\end{array}\right\},\\
&\textbf{TB}_6=\left\{\begin{array}{l} T(6),T(\bar6),T(5,1),T(5,\bar1),T(\bar5,1),T(\bar5,\bar1),T(\bar1,5),T(\bar1,\bar5),T(\bar4,2),T(2,\bar4),\\ T(4,1_2),T(4,1,\bar1),T(4,\bar1,1),T(4,\bar1_2),T(\bar4,\bar1_2),T(\bar1,4,1),T(\bar1,4,\bar1),T(\bar1,\bar4,\bar1),T(\bar1,1,\bar4),\\
T(3,2,\bar1),T(3,\bar1,1_2),T(3,\bar1,1,\bar1),T(\bar1,3,\bar1_2),T(\bar1_2,3,\bar1)\end{array}\right\}.
\end{align*}

\begin{re} In \cite{XuZhao2020b}, we also studied the $\Q$-vector space generated by the AMTVs of any fixed weight $w$ 
and conjectured that their dimensions $\{d_w\}_{w\ge 1}$ form the tribonacci sequence 1, 2, 4, 7, 13, ....
\end{re}

In the following three examples, we provide some evaluations of a few AMtVs, AMSVs and AMTVs
using the basis given in the above, respectively.

\begin{exa} We have
\begin{align*}
&\begin{aligned}
\tilde{t}(\bar2,\bar1)=-\tfrac1{2}\tilde{t}(3)+\tfrac1{2}\tilde{t}(2,1)=\tfrac1{4}\pi^2\log2-\tfrac7{4}\zeta(3),
\end{aligned}\\
&\begin{aligned}\tilde{t}(\bar1,1,\bar1)=-\tilde{t}(\bar1,\bar2)-\tfrac1{2}\tilde{t}(2,1)=-\pi G+\tfrac{21}{8}\zeta(3),\end{aligned}\\
&\begin{aligned}\tilde{t}(\bar1,\bar1,\bar1)=-\tfrac1{2}\tilde{t}(3)-\tilde{t}(\bar1,\bar2)-\tfrac3{4}\tilde{t}(2,1)
    =-\tfrac1{8}\pi^2\log2-\pi G+\tfrac{35}{16}\zeta(3),\end{aligned}\\
&\begin{aligned}\tilde{t}(\bar2,\bar1,1)={}&-\tfrac5{8}\tilde{t}(4)-\tilde{t}(\bar2,\bar2)-\tfrac1{4}\tilde{t}(3,1)+\tfrac3{4}\tilde{t}(2,1,1)\\
={}&-\tfrac{23}{1440}\pi^4+\tfrac5{24}\pi^2\log^2 2+\tfrac1{6}\log^4 2-2G^2+4\Li_4\big(\tfrac12\big),\end{aligned}\\
&\begin{aligned}
\tilde{t}(\bar1,\bar2,\bar1)={}&2\tilde{t}(\bar4)-2\tilde{t}(2,\bar2)+2\tilde{t}(\bar3,1)-4\tilde{t}(3,\bar1)-\tilde{t}(2,1,\bar1)\\
={}&\tfrac3{16}\pi^3\log2+\tfrac1{12}\pi\log^3 2+\tfrac7{8}\pi\zeta(3)-16\beta(4)+16\Im\Li_4\big(\tfrac{1+i}{2}\big),
\end{aligned}\\
&\begin{aligned}
\tilde{t}(\bar2,\{\bar1\}_3)={}&\tfrac1{40}\tilde{t}(5)+\tfrac9{5}\tilde{t}(\bar4,\bar1)
    -\tfrac{41}{40}\tilde{t}(4,1)-\tfrac1{8}\tilde{t}(3,1,1)+\tfrac1{8}\tilde{t}(2,1_3)\\
={}&-\tfrac{91}{5760}\pi^4\log2+\tfrac1{72}\pi^2\log^3 2+\tfrac1{60}\log^5 2+\tfrac{29}{128}\pi^2\zeta(3)-2\Li_5\big(\tfrac12\big)-\tfrac{155}{256}\zeta(5).
\end{aligned} 
\end{align*}
\end{exa}

\begin{exa} We have
\begin{align*}
&\begin{aligned}
S(\bar1,2)=-\tfrac2{3}S(\bar2,\bar1)+\tfrac1{3}S(\bar2,1)=\tfrac{11}{96}\pi^3-4\Im\Li_3\big(\tfrac{1+i}{2}\big)-2G\log2+\tfrac1{8}\pi\log^2 2,
\end{aligned}\\
&\begin{aligned}
S(\bar1,\bar1,2)={}&\tfrac8{3}S(\bar1,\bar3)-\tfrac{19}{15}S(\bar1,3)-\tfrac{8}{5}S(\bar3,\bar1)+\tfrac4{5}S(\bar3,1)-2S(2,\bar1,\bar1)+S(2,\bar1,1)\\
={}&-6\beta(4)+8\Im\Li_4\big(\tfrac{1+i}{2}\big)+4\Im\Li_3\big(\tfrac{1+i}{2}\big)\log2+G\log^2 2-\tfrac1{12}\pi\log^3 2,
\end{aligned}\\
&\begin{aligned}
S(\bar1,1,\bar1,\bar1)={}&\tfrac{15}{16}S(4)+\tfrac1{8}S(2,2)-\tfrac1{4}S(\bar2,1,1)+\tfrac3{4}S(\bar1,1,\bar1,1)\\
={}&2G^2+\tfrac9{64}\pi^4-6\pi\Im\Li_3\big(\tfrac{1+i}{2}\big)-2\pi G\log2+\tfrac3{16}\pi^2\log^2 2-\tfrac7{4}\zeta(3)\log2,
\end{aligned}\\
&\begin{aligned}
S(\bar3,1,\bar1)={}&-\tfrac{31}{112}S(5)-\tfrac{13}{112}S(3,2)-\tfrac{3}{16}S(4,1)+\tfrac1{4}S(3,1,1),
\end{aligned}\\
&\begin{aligned}
S(\bar3,1,2)={}&\tfrac{35}{128}S(6)-\tfrac{25}{192}S(2,4)+\tfrac{25}{192}S(4,2)-\tfrac1{3}S(4,1,\bar1)+\tfrac1{6}S(4,1,1)\\
={}&\tfrac{247}{967680}\pi^6+\tfrac1{288}\pi^4\log^2 2-\tfrac1{288}\pi^2\log^4 2
-\tfrac7{96}\pi^2\zeta(3)\log2+\tfrac7{16}\ze^2(3)-\tfrac1{12}\pi^2\Li_4\big(\tfrac12\big).
\end{aligned}
\end{align*}
\end{exa}

\begin{exa} We have
\begin{align*}
&\begin{aligned}
T(\bar2,\bar1)=-2T(\bar3)+2T(2,\bar1)=-\tfrac3{16}\pi^3-\tfrac1{4}\pi\log^2 2+4G\log2+8\Im\Li_3\big(\tfrac{1+i}{2}\big),
\end{aligned}\\
&\begin{aligned}
T(\bar1,1,1,\bar1)=\tfrac1{2}T(\bar4)+\tfrac3{2}T(\bar3,\bar1)=-\tfrac1{4}\pi^2G+2\beta(4),
\end{aligned}\\
&\begin{aligned}
T(\bar1,1,\bar1,\bar1)=2T(\bar3,\bar1)+T(3,\bar1)=-\tfrac7{8}\pi\zeta(3)-\tfrac1{4}\pi^2G+6\beta(4),
\end{aligned}\\
&\begin{aligned}
T(\bar3,\bar1,1)={}&\tfrac6{5}T(\bar5)+2T(3,\bar2)+6T(\bar4,\bar1)-6T(4,\bar1)+2T(3,\bar1,\bar1)\\
={}&-\tfrac7{128}\pi^5-\tfrac1{16}\pi^3\log^2 2+\pi^2G\log2+2\pi^2\Im\Li_3\big(\tfrac{1+i}{2}\big),
\end{aligned}\\
&\begin{aligned}
T(\bar3,1_3)={}&-\tfrac5{48}T(6)-T(\bar5,1)+T(5,1)-\tfrac1{4}T(4,1,1)\\
={}&-\tfrac{509}{322560}\pi^6-\tfrac{\pi^4}{96}\log^2 2+\tfrac{\pi^2}{96}\log^4 2
    +\tfrac{7\pi^2}{32}\zeta(3)\log2+\tfrac{\pi^2}{4}\Li_4\big(\tfrac12\big)-\tfrac{97}{128}\ze^2(3)+\zeta(\bar5,1),
\end{aligned}\\
&\begin{aligned}
T(\bar1,2,1_3)=-\tfrac1{2}T(\bar1,\bar5)+\tfrac{13}{2}T(\bar5,\bar1)+T(5,\bar1).
\end{aligned}
\end{align*}
\end{exa}

Supported by theoretical and numerical evidence presented in this paper, we now formulate a few conjectures concerning the dimensions of some subspaces of AMMVs. These conjectures hint at a few very rich  but previously overlooked algebraic and geometric structures associated with these vector spaces. It is our hope that these unsolved problems will provoke further development and more detailed studies of the motivic theory of the AMMVs.

\begin{con} \emph{(cf.\ \cite[Conj. 5.2]{XuZhao2020b})} \label{conj:tribonacci2}
We have the following generating function
\begin{equation*}
    \sum_{n=0}^\infty (\dim_\Q \AMTV_n)t^n = \frac{1}{1-t-t^2-t^3}.
\end{equation*}
Namely, the dimensions form the tribonacci sequence $\{d_w\}_{w\ge 1}=\{1,2,4,7,13,24,\dotsc\}$, see A000073 at oeis.org.
\end{con}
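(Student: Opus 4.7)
The conjecture asserts the Hilbert-series identity $\sum_w d_w\, t^w = 1/(1-t-t^2-t^3)$ for $\AMTV_* := \bigoplus_w \AMTV_w$, equivalent to the tribonacci recursion $d_w = d_{w-1}+d_{w-2}+d_{w-3}$. The plan is to establish matching upper and lower bounds; the upper bound should follow from an explicit combinatorial-algebraic argument, while the lower bound is intrinsically a transcendence statement and will be the main obstacle.

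The first step exploits the iterated integral presentation following \eqref{AMMV-iterated-integrals}: every $T^{\bfsi}(\bfk)$ is an integral over $[0,1]$ of a word of length $w$ in the restricted three-letter alphabet $\{\omz,\om_{+1}^{-1},\om_{-1}^{-1}\}$ (the global sign being absorbed by $\bfsi$). This singles out a graded subalgebra of $\fA$ whose image under $\evaM$ is precisely $\AMTV_*$. For the upper bound, I would combine, on this subalgebra, (i) the restricted shuffle and stuffle products giving the regularized double shuffle relations of Theorem~\ref{thm:RDSoverR}; (ii) the duality of Theorem~\ref{thm:dualMMVo}; and (iii) the parity reductions of Theorem~\ref{thm:MMVparity}, iterated on depth. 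The tribonacci growth suggests these relations should single out, in each weight, three ``new'' generators, one each in weights $1,2,3$, producing the recursion $d_w = d_{w-1}+d_{w-2}+d_{w-3}$ as a Radford-type decomposition. Natural candidates are $T(\bar 1)$, $T(2)$, and $T(3)$, consistent with the explicit bases $\bfTB_w$ tabulated in Section~\ref{DCAMMVs}. Making this precise would require a Hoffman-style generating-function identity establishing that the listed relations cut the dimension to exactly the tribonacci count.

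For the lower bound I would lift AMTVs to motivic iterated integrals in the category $\mathbf{MT}(\mathbb{Z}[i,\tfrac{1}{2}])$ of mixed Tate motives unramified outside $2$, obtaining a graded subspace $\AMTV_*^{\mathfrak{m}}$ whose periods recover $\AMTV_*$. Analyzing the coaction of the motivic Galois group on $\AMTV_*^{\mathfrak{m}}$ (following Brown), one must then show both that its graded dimensions already meet the tribonacci count and that the period map is injective in each weight. This is the principal obstacle: the analogous statement for ordinary MZVs is Zagier's still-open dimension conjecture, and \cite{Deligne2010} yields only the Fibonacci upper bound in the closely related level-two case. In the short term, the conjecture can be verified numerically to as high a weight as LLL allows (the match through weight $8$ in Table~\ref{Table:dimAMMV} is strong evidence), while a uniform proof likely awaits an explicit description of the motivic coaction that isolates the three weight-$1/2/3$ generators responsible for the recursion.
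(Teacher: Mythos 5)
The statement you were asked to prove is a \emph{conjecture}: the paper offers no proof of it, only computational evidence --- genuine upper bounds through weight $6$ obtained from Au's proven relations among level-four CMZVs, and LLL-based numerics at weights $7$ and $8$ matching the tribonacci values $44$ and $81$. So there is no ``paper's own proof'' to compare against, and your proposal, which candidly presents itself as a research program rather than a demonstration, cannot be judged correct or incorrect as a proof; it is an outline of how one might someday attack the statement. Your diagnosis of the lower bound as the fundamental obstruction (a transcendence/period-injectivity statement on the level of Zagier's dimension conjecture, with Deligne--Goncharov giving only upper bounds even in the level-two case) is accurate and matches the paper's own framing of this as an open problem.

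Two cautions on the upper-bound half of your sketch, which is the only part that could in principle be made rigorous with current technology. First, while the AMTV iterated integrals do live over the three-letter alphabet $\{\omz,\om_{+1}^{-1},\om_{-1}^{-1}\}$ and this sub-alphabet is closed under the shuffle product $\sha$, the corresponding words are \emph{not} closed under the stuffle product $\cst$: the $\eps$-pattern $\eps_j=(-1)^{r+1-j}$ defining AMTVs is destroyed by the non-stuffing concatenation terms in the recursion for $*$, so the product of two AMTVs is generically an $\AMMV$-combination that leaves the AMTV subspace. Consequently a Hoffman/Radford-type free-polynomial-algebra decomposition of an ``AMTV subalgebra,'' which is what your counting of three new generators in weights $1,2,3$ would require, is not available without first proving that the relevant quotient still closes up --- this is precisely where the analogous argument for MTVs at level two already breaks down. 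Second, the duality of Theorem~\ref{thm:dualMMVo} and the parity theorem also move you out of the AMTV subspace into general AMMVs, so any dimension count extracted from them bounds $\AMMV_w$, not $\AMTV_w$, unless one controls the intersection. These are the concrete reasons the paper leaves the statement as a conjecture, and your proposal would need to resolve them even for the upper bound.
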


\begin{con}  \label{conj:AMSV} We have $\dim_\Q \AMSV_1=1$, $\dim_\Q \AMSV_2=3$, and
\begin{equation*}
     \dim_\Q \AMSV_n=2\dim_\Q \AMSV_{n-1}-2\Big\lfloor \frac{n+1}{2} \Big\rfloor +4 \quad \text{for all $n\geq 3$}.
\end{equation*}
\end{con}

\begin{con}  \label{conj:AMtV} For $n\geq 2$, we have
\begin{equation*}
     \dim_\Q \AMtV_n=3 \times 2^{n-2}.
\end{equation*}
\end{con}

\begin{con}\label{conj:AMMVoinAMMVe}
For all weight $w\ge 1$ we have
\begin{enumerate}
  \item [\upshape{(i)}] $\AMMVo_w\subsetneq \AMMVe_w=\AMMV_w$,
  \item [\upshape{(ii)}] $\dim_\Q \AMMVo_w=2^w-1$, $\dim_\Q \AMMV_w=2^w$, and
  \item [\upshape{(iii)}] $\AMMV_w=\AMMVo_w\oplus  \Q\log^w 2$.
\end{enumerate}
\end{con}

Finally, from numerical evidence, we have the following conjectural diagram showing
relations between all the different subspaces of $\AMMV$.
\begin{center}
\includegraphics[height=2.2 in]{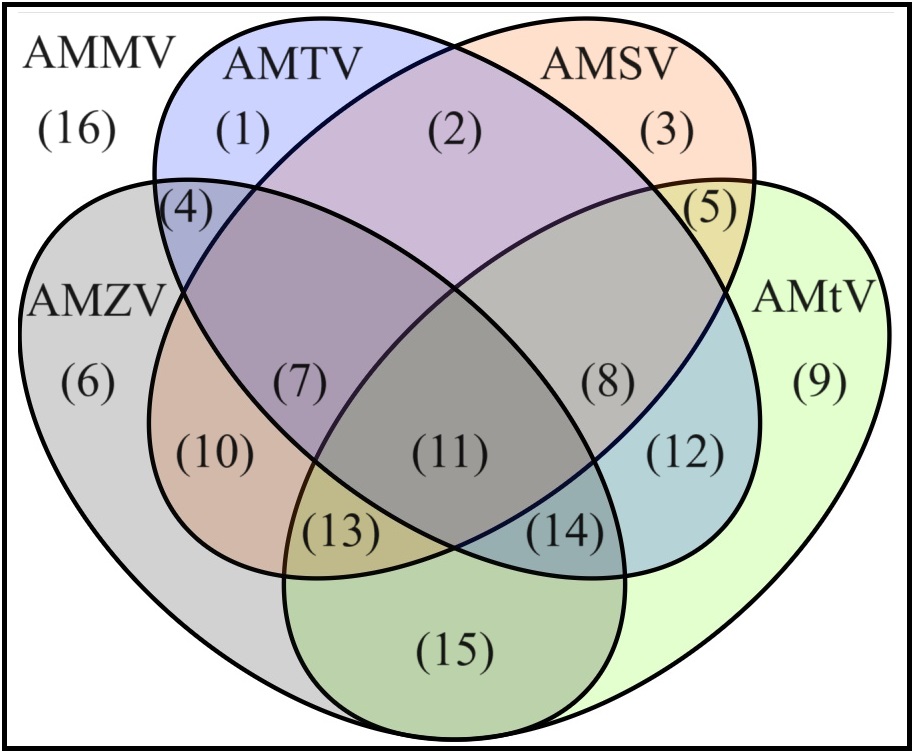}
\end{center}

In the following table, we provide at least one element (if we found any)
in each of the above 16 regions, which has been verified
numerically using the command \texttt{lindep} in GP-Pari with up to 4000 digits of precision.
\begin{table}[!h]
{
\caption{Inclusion relations between various subspaces of $\AMMV$.}
\begin{center}
\begin{tabular}{  |c|c|c|c|c|c|c|c|c|} \hline
 Region        & (1) & (2)& (3)& (4)& (5)& (6)& (7)& (8)\\ \hline
Element  &$\emptyset$ &  $T(2,\bar1)$  &  $S(2,\bar1,1)$  &$\emptyset$ &  $S(\bar3,1)$  & $\log^4 2$  & $\zeta(\bar3,1)$ & $T(\bar4)$\\ \hline\hline
 Region        & (9) & (10) & (11) & (12) & (13) & (14) & (15) & (16)   \\ \hline
Element  &$t(\bar1,1,1)$ &  $S(3,1)$  & $\zeta(4)$  & $T(\bar2,1)$  & $S(4,1)$ &$\emptyset$&  $\zeta(2)\log^2 2$ & $M(\cbar1, 1_4)$\\ \hline
\end{tabular}
\end{center}
}
\label{Table:AMMVregion}
\end{table}

After extensive search up to weight 8, we found that only one AMTV, i.e., $T(\bar2,1)$,
lies outside of $\AMSV$. Hence, we propose the following conjecture.
\begin{con}\label{conj:AMTVinAMSV}
For all $w\ge 4$ we have the inclusion $\AMTV_w\subsetneq \AMSV_w$.
\end{con}

Turing to the non-alternating MMVs, let $\MMV$ be the $\Q$-span of all MMVs. We know from \cite[Theorem 7.1]{XuZhao2020a} that 
the space $\MMV$ should be included properly in the space $\AMZV$ of alternating MZVs. By numerical computation up to weight 12,
we have found the following diagram showing the relations between different subspaces of $\MMV$.

\begin{center}
\includegraphics[height=1.8 in]{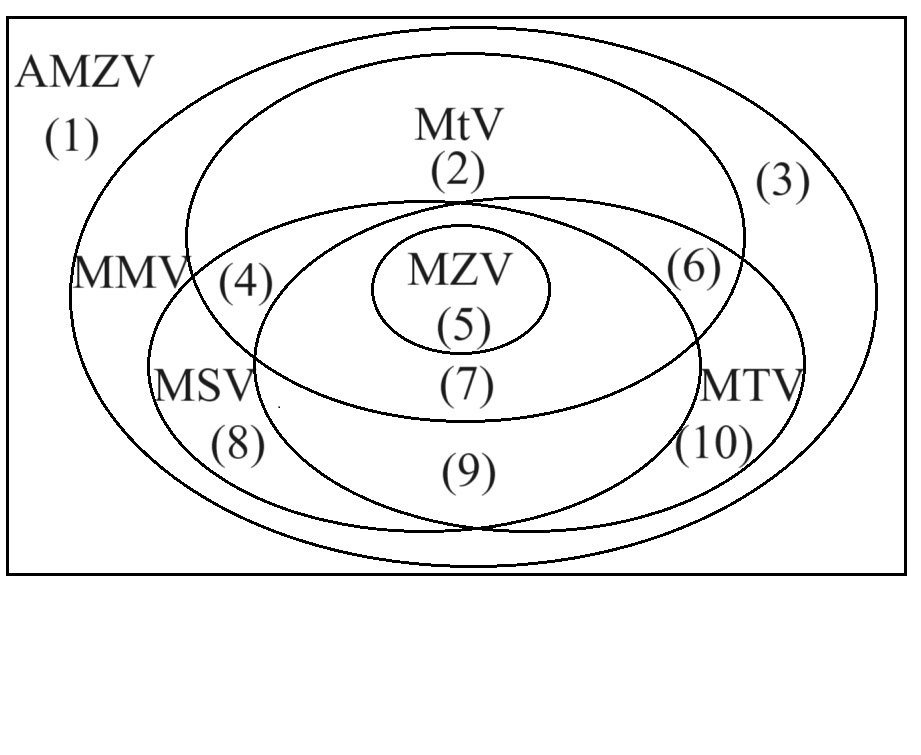}
\end{center}

In the following table, similarly to the AMMV case, we provide at least one element (if we found any)
in each of the above 10 regions, which has been verified numerically by GP-Pari. 

\begin{table}[!h]
{
\caption{Inclusion relations between various subspaces of $\MMV$.}
\begin{center}
\begin{tabular}{  |c|c|c|c|c|c|c|c|c|c|c|} \hline
 Region      & (1) & (2)& (3)& (4)& (5)& (6)& (7)& (8) & (9) & (10)\\ \hline
Element   &  $\log 2$  &  $t(2,1)$  & $M(\wc2,1_4)$  & $S(4,1_2)$  & $\zeta(2) $ &  $\emptyset$  &  $S(5,1,2)$ & $S(2,1)$  & $S(2,2)$  & $\emptyset$  \\ \hline\hline
\end{tabular}
\end{center}
}
\label{Table:MMVregion}
\end{table}

After extensive search up to weight 12, we found that no MTV lies outside the space of MSVs.
Note that $\MTV_2=\MSV_2=\MZV_2=\pi^2\Q$. Hence, we propose the following conjecture.
\begin{con}\label{conj:MTVinMSV}
For all weight $w\ge 3$ we have the inclusion $\MTV_w\subsetneq \MSV_w$.
\end{con}

Conjecture~\ref{conj:AMTVinAMSV} and Conjecture~\ref{conj:MTVinMSV} are consistent in spirit with
Conjecture~\ref{conj:AMMVoinAMMVe} and the following conjecture (see \cite[Table 1]{XuZhao2020a}).

\begin{con}\label{conj:MMVoinMMVe}
Let $\MMVe_w$ (resp.\ $\MMVo_w$) be the $\Q$-span of all MMVs with the smallest index being even
(resp.\ odd). Then 
\begin{enumerate}
  \item [\upshape{(i)}] $\MMVe_w=\MMV_w$ for all $w\ge 1$, and
  \item [\upshape{(ii)}] $\MMVo_w\subsetneq \MMV_w$ for all $w\ge 5$.
\end{enumerate} 
\end{con}

\bigskip
\noindent
{\bf Acknowledgments.} Ce Xu expresses his deep gratitude to Dr. Li Lai and Kamcheong Au for valuable discussions and comments. All the authors are very grateful to the referee whose extremely detailed comments, suggestions and questions have prompted them to dig much deeper into the subtle structures of relevant objects of this paper. In particular, his suggestion to use generating functions to prove the general parity result is very enlightening. Ce Xu is supported by the National Natural Science Foundation of China (Grant No. 12101008), the Natural Science Foundation of Anhui Province (Grant No. 2108085QA01) and the University Natural Science Research Project of Anhui Province (Grant No. KJ2020A0057). Lu Yan is supported by the Fundamental Research Funds for the Central Universities. Jianqiang Zhao is supported by the Jacobs Prize from The Bishop's School.

\medskip
\noindent
\textbf{Declaration of Competing Interest.} The authors declare that there is no competing interest.

\end{document}